\let\dotlessi=\i
\newcommand{\pFq}[5]{\ensuremath{{}_{#1}F_{#2} \left( \genfrac{}{}{0pt}{}{#3}
		{#4} \bigg| {#5} \right)}}
\renewcommand{\a}{\alpha}
\renewcommand{\b}{\beta}
\newcommand{\G}{\Gamma}
\newcommand{\z}{\zeta}
\numberwithin{equation}{section}
\newtheorem{remark}[]{Remark}
\newtheorem{theorem}{Theorem}
\newtheorem{lemma}[theorem]{Lemma}
\newtheorem{corollary}[theorem]{Corollary}
\newtheorem{proposition}[theorem]{Proposition}
\theoremstyle{definition}
\numberwithin{theorem}{section} 
\numberwithin{equation}{section}
\numberwithin{table}{section}
\newcommand{\C}{\mathbb{C}}
\newcommand{\N}{\mathbb{N}}
\newcommand{\re}{\textnormal{Re}}
\newcommand{\im}{\textnormal{Im}}
\renewcommand{\(}{\left(}
\renewcommand{\)}{\right)}
\def\proof{\@ifnextchar[{\@oproof}{\@nproof}}
\def\@oproof[#1][#2]{\trivlist\item[\hskip\labelsep\textit{#2 Proof of\
		#1.}~]\ignorespaces}
\def\@nproof{\trivlist\item[\hskip\labelsep\textit{Proof.}~]\ignorespaces}
\begin{document}
	\title[]{Explicit transformations for generalized Lambert series associated with the divisor function $\sigma_{a}^{(N)}(n)$ and their applications}
\author{Soumyarup Banerjee}
\address{Department of Mathematics and Statistics, Indian Institute of Science Education and Research Kolkata, Mohanpur, Nadia 741246, West Bengal, India}

\email{soumya.tatan@gmail.com}

\author{Atul Dixit}
\address{Discipline of Mathematics, Indian Institute of Technology Gandhinagar, Palaj, Gandhinagar 382355, Gujarat, India} 
\email{adixit@iitgn.ac.in}

\author{Shivajee Gupta}
\address{Discipline of Mathematics, Indian Institute of Technology Gandhinagar, Palaj, Gandhinagar 382355, Gujarat, India} 
\email{shivajee.o@iitgn.ac.in}
\thanks{2020 \textit{Mathematics Subject Classification.} Primary 11M06; Secondary 33C10, 11P82.\\
	\textit{Keywords and phrases.} Ramanujan's formula for $\zeta(2m+1)$, asymptotic expansions, plane partitions, Dedekind eta-transformation, Meijer $G$-function.}

	\medskip
	\begin{abstract}
	Let $\sigma_a^{(N)}(n)=\sum_{d^{N}|n}d^a$. An explicit transformation is obtained for the generalized Lambert series  $\sum_{n=1}^{\infty}\sigma_{a}^{(N)}(n)e^{-ny}$ for $\re(a)>-1$ using the recently established Vorono\"i summation formula for $\sigma_a^{(N)}(n)$, and is extended to a wider region by analytic continuation. For $N=1$, this Lambert series plays an important role in string theory scattering amplitudes as can be seen in the recent work of Dorigoni and Kleinschmidt. These transformations exhibit several identities - a new generalization of Ramanujan's formula for $\zeta(2m+1)$, an identity associated with extended higher Herglotz functions, generalized Dedekind eta-transformation, Wigert's transformation etc., all of which are derived in this paper, thus leading to their uniform proofs. A special case of one of these explicit transformations naturally leads us to consider generalized power partitions with ``$n^{2N-1}$ copies of $n^{N}$''. Asymptotic expansion of their generating function as $q\to1^{-}$ is also derived which generalizes Wright's result on the plane partition generating function. In order to obtain these transformations, several new intermediate results are required, for example, a new reduction formula for Meijer $G$-function and an almost closed-form evaluation of $\left.\frac{\partial E_{2N, \b}(z^{2N})}{\partial\beta}\right|_{\beta=1}$, where $E_{\alpha, \beta}(z)$ is a two-variable Mittag-Leffler function.
	\end{abstract}
	\thanks{}
\maketitle
	\vspace{-1cm}
	\tableofcontents
	\vspace{-1.3cm}

	\section{Introduction}\label{intro}
	
	Lambert series are series of the form
	%\begin{equation}
	$\displaystyle\sum_{n=1}^{\infty}a(n)\frac{q^{n}}{1-q^{n}}$,
	%\end{equation}
	where $a(n)$ is an arithmetic function and $q$ is a complex number such that $|q|<1$. They can be written in the equivalent form
	%\begin{equation}
	$\displaystyle\sum_{n=1}^{\infty}\frac{a(n)}{e^{ny}-1}$
	%\end{equation}
	by letting $q=e^{-y}$, where Re$(y)>0$.
	Lambert series are central objects in number theory. For example, when $a(n)=n^{k}$, where $k\geq3$ is an odd integer, these Lambert series $\displaystyle\sum_{n=1}^{\infty}\frac{n^k}{e^{ny}-1}=\sum_{n=1}^{\infty}\sigma_{k}(n)e^{-ny}$, where $\sigma_{a}(n):=\sum_{d|n}d^a$, are \emph{essentially} Eisenstein series on $\textup{SL}_{2}(\mathbb{Z})$. It is well-known that Eisenstein series satisfy elegant modular transformations and  are prototypical examples of modular forms on the full modular group. 
	
	Ramanujan \cite[p.~173, Ch. 14, Entry 21(i)]{ramnote}, \cite[p.~319-320, formula (28)]{lnb}, \cite[p.~275-276]{bcbramsecnote} derived a famous transformation which involves the value of the Riemann zeta function at an odd positive integer, that is, $\zeta(2m+1)$, and the Lambert series given above, namely, for Re$(\a), \textup{Re}(\b)>0$ with $\a\b=\pi^2$ and $m\in\mathbb{Z}\backslash\{0\}$,
	\begin{align}\label{zetaodd}
		\a^{-m}\left\{\frac{1}{2}\zeta(2m+1)+\sum_{n=1}^{\infty}\frac{n^{-2m-1}}{e^{2\a n}-1}\right\}&=(-\b)^{-m}\left\{\frac{1}{2}\zeta(2m+1)+\sum_{n=1}^{\infty}\frac{n^{-2m-1}}{e^{2\b n}-1}\right\}\nonumber\\
		&\quad-2^{2m}\sum_{j=0}^{m+1}\frac{(-1)^jB_{2j}B_{2m+2-2j}}{(2j)!(2m+2-2j)!}\a^{m+1-j}\b^j,
	\end{align}
	where $B_j$ are the Bernoulli numbers and $\zeta(s)=\sum_{n=1}^{\infty}n^{-s}$ for Re$(s)>1$.
	Ramanujan's formula not only encapsulates the aforementioned transformations of Eisenstein series on $\textup{SL}_{2}(\mathbb{Z})$ but also those of their associated Eichler integrals. For more information on this, we refer the reader to \cite{berndtstraubzeta}.
	
	The arithmetic nature of the odd zeta values is quite mysterious. Ap\'{e}ry showed that $\zeta(3)$ is irrational which is the only concrete result we have to this day as far as a single odd zeta value is concerned. Zudilin's result that at least one of $\zeta(5), \zeta(7), \zeta(9)$ or $\zeta(11)$ is irrational is the next best result we have for other odd zeta values. Compare this with the representation of the even zeta values given by Euler's formula, namely,
	\begin{equation}\label{ef}
		\zeta (2m) =(-1)^{m+1}\frac{(2\pi)^{2m}B_{2m}}{2(2m)!},
	\end{equation}
which at once establishes their transcendence as $\pi$ is transcendental.

	At the bottom of page 332 of \cite{lnb}, Ramanujan considered the generalized Lambert series
	\begin{equation}\label{gls}
		\mathfrak{F}_{k, N}(y):=\sum_{n=1}^{\infty}\frac{n^k}{e^{n^{N}y}-1},
	\end{equation}
	where $N$ is a positive integer and $k-N$ is any even integer.  However, he did not give any transformation for $\mathfrak{F}_{k, N}(y)$. Obviously, $\mathfrak{F}_{k, 1}(y)$ is the aforementioned Lambert series associated with Eisenstein series on $\textup{SL}_{2}(\mathbb{Z})$. 
	
	Meanwhile, Wigert \cite[p.~8-9, Equation (1.5)]{wig} derived an exact transformation for $\mathfrak{F}_{0, N}(y)$ when $N$ is even, but only an asymptotic formula when $N$ is odd. Kanemitsu, Tanigawa and Yoshimoto \cite[Theorem 1]{ktyhr} obtained a transformation for $\mathfrak{F}_{N-2h, N}(y)$ when $N\in\mathbb{N}$ and $0<h\leq N/2$, and another one in a subsequent work \cite[Theorem 2.1]{ktyacta} when $h\geq N/2$ and $N$ is even.
	
	The second author and Maji \cite[Theorem 1.1]{dixitmaji1} extended the aforementioned results of Kanemitsu, Tanigawa and Yoshimoto to any $h\in\mathbb{Z}$ and $N\in\mathbb{N}$, and as a result, gave the following one-parameter generalization of Ramanujan's formula \eqref{zetaodd} (see \cite[Theorem 1.2]{dixitmaji1}):
	
	{\textit Let $N$ be an odd positive integer and $\a,\b>0$ such that $\a\b^{N}=\pi^{N+1}$. Then for any non-zero integer $m$,
		\begin{align}\label{zetageneqn}
			&\a^{-\frac{2Nm}{N+1}}\left(\frac{1}{2}\zeta(2Nm+1)+\sum_{n=1}^{\infty}\frac{n^{-2Nm-1}}{\textup{exp}\left((2n)^{N}\a\right)-1}\right)\nonumber\\
			&=\left(-\b^{\frac{2N}{N+1}}\right)^{-m}\frac{2^{2m(N-1)}}{N}\Bigg(\frac{1}{2}\zeta(2m+1)+(-1)^{\frac{N+3}{2}}\sum_{j=\frac{-(N-1)}{2}}^{\frac{N-1}{2}}(-1)^{j}\sum_{n=1}^{\infty}\frac{n^{-2m-1}}{\textup{exp}\left((2n)^{\frac{1}{N}}\b e^{\frac{i\pi j}{N}}\right)-1}\Bigg)\nonumber\\
			&\quad+(-1)^{m+\frac{N+3}{2}}2^{2Nm}\sum_{j=0}^{\left\lfloor\frac{N+1}{2N}+m\right\rfloor}\frac{(-1)^jB_{2j}B_{N+1+2N(m-j)}}{(2j)!(N+1+2N(m-j))!}\a^{\frac{2j}{N+1}}\b^{N+\frac{2N^2(m-j)}{N+1}}.
	\end{align}}
	It is easy to see that one recovers \eqref{zetaodd} upon letting $N=1$ in the above identity. Recently, a new method based on convolution of ``Dirichlet series'' was given in \cite{ccvw} to prove \eqref{zetageneqn} and other such identities.
	
	Observe that the function $\mathfrak{F}_{k, N}(y)$ in \eqref{gls} can be written in the form
	\begin{equation}\label{fkny}
		\mathfrak{F}_{k, N}(y)=\sum_{n=1}^{\infty}\sigma_{k}^{(N)}(n)e^{-ny},
	\end{equation}
	where, for any $a\in\mathbb{C}$ and $N\in\mathbb{N}$,
	\begin{equation}\label{defaf}
		\sigma_a^{(N)}(n) := \sum_{d^N\mid n} d^a
	\end{equation}
	is a generalization of $\sigma_a(n)$, which seems to first appear explicitly in the work of Crum \cite{crum} although its special case $a=0$, and denoted by $d^{(N)}(n)$, was studied long before by Wigert \cite{wig}. This function also briefly appeared in the work of Berndt, Roy, Zaharescu and the second author \cite[Section 10.2]{bdrz1}.
	
	The generalized divisor function $\sigma_a^{(N)}(n)$ turns up in various areas of mathematics. For example, consider the generalized Ramanujan sum introduced by Cohen \cite{cohen}, namely,
	\begin{equation*}
		c_{\ell, N}(n):=\sum_{b=1\atop{(b,\ell^N)_N=1}}^{\ell^N} \exp{\left(\frac{2\pi ibn}{\ell^{N}}\right)},
	\end{equation*} 
	where, by $(b,\ell^N)_N=1$, we mean there is no prime $p$ such that $p|\ell$ and $p^{N}|b$. The Dirichlet series associated with $c_{\ell, N}(n)$ then satisfies \cite[Theorem 4]{cohen} (see also \cite[p.~163]{mccarthy})
	\begin{equation*}
		\zeta(s)\sum_{\ell=1}^{\infty}\frac{c_{\ell, N}(n)}{\ell^s}=\sigma_{1-\frac{s}{N}}^{(N)}(n)
	\end{equation*}
	for $s>N$. The special case $N=1$ of this result is due to Ramanujan \cite{ram1918}.
	
	Furthermore, if $p_{k}(n)$ denotes the number of partitions of an integer $n$ into $k$-th powers, commonly known as \emph{power partitions of order $k$}, one encounters the generalized Lambert series $\mathfrak{F}_{N, N}(y)$, defined in \eqref{fkny}, in the new proof of the asymptotic expansion of $p_k(n)$ given by Tenenbaum, Wu and Li \cite[Equation (2.4)]{tenenbaum} using the saddle-point method. 
	
	Very recently, Zagier \cite{zagierhrj}, in his study of power partitions, generalized the modular transformation satisfied by the Dedekind eta function $\eta(z):=e^{\frac{i\pi z}{12}}\prod\limits_{n=1}^{\infty}(1-e^{2\pi inz}), \textup{Im}(z)>0$, namely $\eta(-1/z)=\sqrt{-iz}\eta(z)$, by showing that the generalized eta-function $\eta_s(z)$
	\begin{equation}\label{Zagier eta}
		\eta_s(z):=\exp{\left(-\pi i\zeta(-s)z\right)}\prod_{n=1}^{\infty}(1-\exp{(2\pi in^{s}z)})\hspace{4mm}(z\in\mathbb{H}, s\in\mathbb{R}^{+})
	\end{equation}
	satisfies
	\begin{equation}\label{Zagier}
		\eta_N(-1/z)=(2\pi)^{(N-1)/2}\sqrt{-iz}\prod_{w\in\mathbb{H}\atop{w^N=\pm z}}\eta_{1/N}(w),
	\end{equation}
	for $N\in\mathbb{N}$. Letting $s=N$ in \eqref{Zagier eta} and then taking logarithm on both sides of the resulting identity leads to \eqref{log eta} below, which again involves the function $\sigma_{N}^{(N)}(n)$.
	
	Zagier's generalized eta function in \eqref{Zagier eta} as well as his transformation \eqref{Zagier} both occur\footnote{Ramanujan gave equivalent definition of the generalized eta function for $x>0$, but it can be easily extended to Re$(x)>0$.} in an equivalent form on page $330$ of Ramanujan's Lost Notebook \cite{lnb} although he proves two further results involving the function \cite[Theorems 1, 3]{zagierhrj}. See \cite[p.~234-238]{RLNII} for the proof of the equivalent form of \eqref{Zagier} given by Ramanujan. The case $N=2$ of \eqref{Zagier} is given without proof by Hardy and Ramanujan in \cite{ramhar}. Kr\"{a}tzel \cite{kratzel2} further generalized \eqref{Zagier} by means of his generalization of Ramanujan's equivalent form of \eqref{Zagier}.
	In \S \ref{zagierjrh}, we will show that the logarithmic derivative of \eqref{Zagier} is nothing but  the special case $m=-1$ of our Theorem \ref{zetagen3} given in Corollary \ref{zagiereqvt}.
	
 In addition to working with $\mathfrak{F}_{0, N}(y)$, Wigert \cite{Wig} also worked with the generalized partial theta function $\sum_{n=1}^{\infty}\exp{(-n^{N}x)}$ and obtained a transformation for it.
 
	Koshliakov \cite{koshwigleningrad} obtained the Vorono\"{\dotlessi} summation formula for $\sigma_{0}^{(N)}(n)$, or, equivalently $d^{(N)}(n)$, although he does not give any indication of how he proved one of the crucial intermediate results \cite[Equation (6)]{koshwigleningrad} used in deriving his formula. A generalization of this intermediate result of Koshliakov was recently obtained in \cite[Theorem 2.1]{DMV}. Its proof is deep, employing the uniqueness theorem in the theory of linear differential equations along with the properties of Stirling numbers of the second kind and elementary symmetric polynomials. Obviously, it also rigorously proves Equation (6) in Koshliakov's paper \cite{koshwigleningrad}. Koshliakov's Vorono\"{\dotlessi} summation formula for $\sigma_{0}^{(N)}(n)$ is stated next.\\
	
	\textit{ Let $0 < \alpha < \beta$ and $ \alpha,  \beta \not\in \mathbb{Z}$.  Let $N>1$ be a natural number. Let $f(x)$ be an analytic function defined inside a closed contour containing $[\alpha,  \beta]$. Then
		\begin{align}\label{koshvsf}
			\sum_{\a<n<\b} \sigma_{0}^{(N)}(n)f(n) & = \int_{\a}^{\b} \left( \zeta(N) +\frac{1}{N}\zeta\left(\frac{1}{N}\right) y^{1/N-1}\right) f(y) dy \nonumber\\
			& + 4 (2\pi)^{1/N-1} \sum_{n=1}^\infty S_{0}^{(N)}(n) \int_{\a}^\b H^{(N)}\left( (2\pi)^{1+ 1/N} (ny)^{1/N} \right)y^{\frac{1}{N}-1} f(y) dy,
		\end{align}
		%where $S^{(k)}(n):= \sum_{d_{1}^k d_2=n} d_{2}^{\frac{1}{k}-1}$ and  $H^{(k)}(x):= \int_{0}^\infty \cos(1/t^k) \cos(x t) t^{-k} dt.$
		where
		\begin{align*}
			S^{(N)}(n) := \sum_{d_1^N d_2 = n} d_2^{\frac{1}{N}-1},\hspace{5mm}
			H^{(N)}(x):= \int_{0}^\infty t^{-N}\cos(t^{-N}) \cos(x t)\, dt.
	\end{align*}}
	Before Koshliakov, Wigert \cite[Equation (B)]{wigannalen} obtained a Vorono\"{\dotlessi} summation formula for the Riesz sum $\sum_{n\leq x}\sigma_{0}^{(N)}(n)(x-n)^p$ for $p>1$. Recently, the second author, Maji and Vatwani \cite{DMV} obtained the Vorono\"{\dotlessi} summation formula for $\sigma_{a}^{(N)}(n)$ and containing the test function $f$, thereby generalizing Koshliakov's \eqref{koshvsf}. This result is now given.
	
	\textit{Let $0 < \alpha < \beta$ and $ \alpha,  \beta \not\in \mathbb{Z}$.  Let $N\in\mathbb{N}$ and $a\in\mathbb{C}$ with $-1 < \re(a) <N$ and $a \neq N-1$.  Define
		\begin{equation}
			S_{a}^{(N)}(n):= \sum\limits_{d_{1}^N d_2=n} d_{2}^{\frac{1+a}{N}-1}\label{defbf}
			\end{equation}
			and let $f(x)$ be analytic inside a closed contour containing $[\alpha,  \beta]$.  Then
		\begin{align*}
			\sum_{ \alpha < n < \beta }  \sigma_a^{(N)}(n)f(n) & =  \int_{\alpha}^{\beta}  f(t) \left( \zeta(N-a) + \frac{1}{N} t^{\frac{1+a}{N}-1} \zeta\left( \frac{1+a}{N} \right)\right) \, dt \nonumber  \\
			&\quad+2 (2\pi)^{\frac{1+a}{N}-1}  \sum_{n=1}^{\infty}  S_{a}^{(N)}(n)  \int_{\alpha}^{\beta }  f(t)  t^{\frac{1+a}{N}-1} H_a^{(N)}\left( ( 2\pi)^{\frac{1}{N}+1} (n t)^{\frac{1}{N}} \right)\, dt,
		\end{align*} 
		where\footnote{We emphasize here that the notation $H_a^{(N)}(x)$ \emph{does not} mean $N$-th derivative of some function $H_a(x)$. This notation is used so as to be consistent with that used by Wigert \cite{wig} and Koshliakov \cite{koshwigleningrad} for the associated arithmetic as well as special functions. We retain it throughout the paper for other functions as well.}
			\begin{equation}\label{HaNx}
			H_a^{(N)}(x) := \int_0^\infty t^{a-N}\cos(xt)\cos\left(\frac{1}{t^N}\right)\, dt.
		\end{equation}
		Moreover, if $a=N-1$, then
		\begin{align*}
			\sum_{ \alpha < n < \beta }  \sigma_{N-1}^{(N)}(n)f(n) & =  \int_{\alpha}^{\beta}  f(t) \left(\frac{(N+1)\gamma+\log(t)}{N}\right)\, dt \nonumber  \\
			&\quad+2 \sum_{n=1}^{\infty}  S_{N-1}^{(N)}(n)  \int_{\alpha}^{\beta }  f(t) H_{N-1}^{(N)}\left( ( 2\pi)^{\frac{1}{N}+1} (n t)^{\frac{1}{N}} \right)\, dt,
		\end{align*}
	where $\gamma$ is Euler's constant. }
	
%		Let $0 < \alpha < \beta$ and $ \alpha,  \beta \not\in \mathbb{Z}$.  Let $N>1$ be a natural number and $a$ be a complex number with $-1 < \re(a) <N$ and $a \neq N-1$.  Let $f(x)$ be an analytic function defined inside a closed contour containing $[\alpha,  \beta]$.  Then
%		\begin{align}
%			\sum_{ \alpha < n < \beta } \sigma_a^{(N)}(n)f(n) & =  \int_{\alpha}^{\beta}  f(t) \left( \zeta(N-a) + \frac{1}{N} t^{\frac{1+a}{N}-1} \zeta\left( \frac{1+a}{N} \right)\right) \mathrm{d}t +  2 (2\pi)^{\frac{1+a}{N}-1}  \sum_{n=1}^{\infty}  S_{a}^{(N)}(n) \nonumber  \\
%			& \times   \int_{\alpha}^{\beta }  f(t)  t^{\frac{1+a}{N}-1} H_a^{(N)}\left( ( 2\pi)^{\frac{1}{N}+1} (n t)^{\frac{1}{N}} \right)dt,
%		\end{align}
%		where
%		\begin{align}\label{saNn}
%			S_a^{(N)}(n) &:= \sum_{d_1^N d_2 = n} d_2^{\frac{1+a}{N}-1},\\
%			H_a^{(N)}(x)&:= \int_{0}^\infty t^{a-N}\cos(1/t^N) \cos(x t) dt.
%	\end{align}}
	When $a=0$, this gives \eqref{koshvsf}. The following ``infinite'' version of the above result was also obtained in  \cite[Theorem 2.4]{DMV}.
	
	\textit{Let $N \in \mathbb{N}, a \in \mathbb{C}$ be such that $ -1 < \re(a) < N$. 
		Let $f \in \mathscr S (\mathbb R)$, where $\mathscr S (\mathbb R)$ denotes the space of Schwartz functions on $\mathbb R$, and let $F(s)=\mathcal{M}(f)(s)$ be the Mellin transform of $f$. 
		Then, for $a\neq N-1$,
		\begin{align}\label{ainfinite}
			\sum_{n=1}^\infty \sigma_{a}^{(N)}(n) f(n) & = - \frac{1}{2}\zeta(-a)f(0^{+}) + \zeta(N-a) \int_{0}^\infty f(y) dy + \frac{1}{N} F\left( \frac{1+a}{N} \right)\zeta\left( \frac{1+a}{N} \right) \nonumber\\ 
			& + \frac{(2 \pi)^{(N+1)\left(\frac{1+a}{N}\right)-a}}{\pi^2} \sum_{n=1}^\infty S_{a}^{(N)}(n) \int_{0}^\infty H_a^{(N)}\left( (2\pi)^{1+ \frac{1}{N}} (ny)^{\frac{1}{N}} \right)y^{\frac{1+a}{N}-1} f(y)\, dy.
	\end{align}
Moreover, when $a=N-1$,
\begin{align*}
	\sum_{n=1}^\infty \sigma_{N-1}^{(N)}(n) f(n) & = -\frac{1}{2}\zeta(1-N)f(0^{+}) + 	\int_{0}^{\infty}  f(t) \left(\frac{(N+1)\gamma+\log(t)}{N}\right) dt \nonumber\\ 
	& + 4 \sum_{n=1}^\infty S_{N-1}^{(N)}(n) \int_{0}^\infty H_{N-1}^{(N)}\left( (2\pi)^{1+ \frac{1}{N}} (ny)^{\frac{1}{N}} \right)f(y)\, dy.
\end{align*}}

Several properties of the kernel $H_a^{(N)}(x)$ are obtained in \cite[Section 4]{DMV}. For  $N=1$, the kernel $H_a^{(N)}(x)$ reduces to a combination of the $K$-, $Y$- and $J$-Bessel functions and thus \eqref{ainfinite} gives the Vorono\"{\dotlessi} summation formula for $\sigma_a(n)$ \cite[Section 6]{bdrz1}, a symmetric version of which was given by Guinand \cite[Equation (1)]{guinand}.
%	and is as follows:
%	Let $-\frac{1}{2}<\textup{Re}(a)<\frac{1}{2}$. Let $f(x)$ and $f'(x)$ be integrals, $f$ tend to zero as $x\rightarrow\infty,f(x),xf'(x)$ and $x^2f''(x)$ belong to $L^2(0,\infty)$. Let
%	\begin{equation}\label{fgkt}
%		g(x)=2\pi\int_0^\infty f(t)\left(\cos\left(\frac{\pi a}{2}\right)\left(\frac{2}{\pi}K_{a}(4\pi\sqrt{xt})-Y_{a}(4\pi\sqrt{xt})\right)-\sin\left(\frac{\pi a}{2}\right)J_a(4\pi\sqrt{xt})\right)dt,
%	\end{equation}
%	where $J_a(x), Y_a(x)$ are Bessel functions of the first and second kinds respectively, and $K_a(x)$ is the modified Bessel function of the second kind. Then,
%	\begin{align}\label{guinandSummationFormulaeqn}
%		&\sum_{n=1}^\infty\sigma_{-a}(n)n^{\frac{a}{2}}f(n)-\zeta(1+a)\int_0^\infty x^{\frac{a}{2}}f(x)\, dx-\zeta(1-a)\int_0^\infty x^{-\frac{a}{2}}f(x)\, dx\nonumber\\
%		&=\sum_{n=1}^\infty\sigma_{-a}(n)n^{\frac{a}{2}}g(n)-\zeta(1+a)\int_0^\infty x^{\frac{a}{2}}g(x)\, dx-\zeta(1-a)\int_0^\infty x^{-\frac{a}{2}}g(x)\, dx.
%	\end{align} 
%	Here, `$f(x)$ is an integral' means $f$ is an integral of some function, that is, $f$ can be written in the form $f(x)=\int_{a}^{x}h(t)\, dt$ for some function $h$ and $-\infty\leq a<x$.
%	
	The second author, Kesarwani and Kumar \cite[Theorem 2.4]{dkk} recently used this version of Guinand and a special case of Koshliakov's integral evaluation \cite[Equation (15)]{kosh1938} to obtain the following transformation for $\sum_{n=1}^{\infty}\sigma_{a}(n)e^{-ny}$ for \emph{any} $a\in\mathbb{C}$ such that Re$(a)>-1$ and $\textup{Re}(y)>0$:
	\begin{align}\label{maineqn}
		&\sum_{n=1}^\infty  \sigma_a(n)e^{-ny}+\frac{1}{2}\left(\left(\frac{2\pi}{y}\right)^{1+a}\mathrm{cosec}\left(\frac{\pi a}{2}\right)+1\right)\zeta(-a)-\frac{1}{y}\zeta(1-a)\nonumber\\
		&=\frac{2\pi}{y\sin\left(\frac{\pi a}{2}\right)}\sum_{n=1}^\infty \sigma_{a}(n)\Bigg(\frac{(2\pi n)^{-a}}{\Gamma(1-a)} {}_1F_2\left(1;\frac{1-a}{2},1-\frac{a}{2};\frac{4\pi^4n^2}{y^2} \right) -\left(\frac{2\pi}{y}\right)^{a}\cosh\left(\frac{4\pi^2n}{y}\right)\Bigg),
	\end{align}
where ${}_1F_2$-hypergeometric function is defined in \eqref{1fqq} below. Moreover, they also analytically continued the above result to obtain \cite[Theorem 2.5]{dkk} , for  $m\in\mathbb{N}\cup\{0\}$, $\mathrm{Re}(a)>-2m-3$, and $\textup{Re}(y)>0$,
	\begin{align}\label{extendedideqn}
		&\sum_{n=1}^\infty  \sigma_a(n)e^{-ny}+\frac{1}{2}\left(\left(\frac{2\pi}{y}\right)^{1+a}\mathrm{cosec}\left(\frac{\pi a}{2}\right)+1\right)\zeta(-a)-\frac{\zeta(1-a)}{y}\nonumber\\
		&=\frac{2\sqrt{2\pi}}{y^{1+\frac{a}{2}}}\sum_{n=1}^\infty\sigma_a(n)n^{-\frac{a}{2}}\left\{{}_{\frac{1}{2}}{K}_{\frac{a}{2}}\left(\frac{4\pi^2n}{y},0\right)-\frac{\pi2^{\frac{3}{2}+a}}{\sin\left(\frac{\pi a}{2}\right)}\left(\frac{4\pi^2n}{y}\right)^{-\frac{a}{2}-2}A_m\left(\frac{1}{2},\frac{a}{2},0;\frac{4\pi^2n}{y}\right)\right\}\nonumber\\
		&\qquad-\frac{y(2\pi)^{-a-3}}{\sin\left(\frac{\pi a}{2}\right)}\sum_{k=0}^m\frac{\zeta(a+2k+2)\zeta(2k+2)}{\Gamma(-a-1-2k)}\left(\frac{4\pi^2}{y}\right)^{-2k},
	\end{align}
	where
	\begin{align*}
		A_m(\mu,\nu,w;z):=\sum_{k=0}^m\frac{(-1)^{-\mu-w-\frac{1}{2}}\Gamma\left(\mu+w+\frac{1}{2}+k\right)}{k!\Gamma\left(-\nu-\mu-k\right)\Gamma\left(\frac{1}{2}-\nu-\mu-w-k\right)}\left(\frac{z}{2}\right)^{-2k},
	\end{align*}
	and ${}_{\mu}K_{\nu}(z, w)$ is a generalized Bessel function which they defined for $\nu\in\mathbb{C}\backslash\left(\mathbb{Z}\backslash\{0\}\right)$, and $z, \mu, w\in\mathbb{C}$ with $\mu+w\neq-\frac{1}{2}, -\frac{3}{2}, -\frac{5}{2},\cdots$ by
	\begin{align*}
		{}_{\mu}K_{\nu}(z, w)&:=\frac{\pi z^w 2^{\mu+\nu-1}}{\sin(\nu\pi)}\bigg\{\left(\frac{z}{2}\right)^{-\nu}\frac{\Gamma(\mu+w+\tfrac{1}{2})}{\Gamma(1-\nu)\Gamma(w+\tfrac{1}{2}-\nu)}\pFq12{\mu+w+\tfrac{1}{2}}{w+\tfrac{1}{2}-\nu,1-\nu}{\frac{z^2}{4}}\nonumber\\
		&\quad\quad\quad\quad\quad\quad-\left(\frac{z}{2}\right)^{\nu}\frac{\Gamma(\mu+\nu+w+\tfrac{1}{2})}{\Gamma(1+\nu)\Gamma(w+\tfrac{1}{2})}\pFq12{\mu+\nu+w+\tfrac{1}{2}}{w+\tfrac{1}{2},1+\nu}{\frac{z^2}{4}}\bigg\},
	\end{align*}
	and with ${}_{\mu}K_{0}(z, w)=\lim_{\nu\to0}{}_{\mu}K_{\nu}(z, w)$. (Observe that ${}_{-\nu}K_{\nu}(z, w)=z^{w}K_{\nu}(z)$.)
	
	It is interesting to note that in his second notebook \cite[p.~269]{ramnote} (see also \cite[p.~416]{V}), Ramanujan obtained a different identity for $\sum_{n=1}^\infty  \sigma_a(n)e^{-n\alpha}$ for any $a\in\mathbb{C}$ such that $\textup{Re}(a)>2$, and where $\a, \b>0$ with $\a\b=4\pi^2$, namely,
	\begin{align*}
		&\sqrt{\alpha^a}\bigg(\dfrac{\Gamma(a)\zeta(a)}{(2\pi)^a}+
		\cos\left(\frac12 \pi a\right)
		\sum_{n=1}^{\infty}\dfrac{n^{a-1}}{e^{\alpha n}-1}\bigg)\\
		&=\sqrt{\beta^a}\bigg(\cos\left(\frac12 \pi a\right)\dfrac{\Gamma(a)\zeta(a)}{(2\pi)^a}+
		\sum_{n=1}^{\infty}\dfrac{n^{a-1}}{e^{\beta n}-1}
		-\sin\left(\frac12 \pi a\right)\textup{PV}
		\int_0^{\infty}\dfrac{x^{a-1}}{e^{2\pi x}-1}
		\cot\left(\frac12 \beta x\right)dx\bigg),
	\end{align*}
	where $\textup{PV}$ denotes the Cauchy principal value.
	
	The result in \eqref{maineqn} gives the transformation formulas for Eisenstein series on $\textup{SL}_{2}(\mathbb{Z})$. Moreover, its generalization, that is \eqref{extendedideqn}, not only gives Ramanujan's formula \eqref{zetaodd} and the modular transformation for the logarithm of the Dedekind eta function as corollaries but also new transformations for $\sum_{n=1}^{\infty}\sigma_{2m}(n)e^{-ny}, m\in\mathbb{Z}\backslash\{0\}$ all of which are derived in \cite{dkk}. Before the work in \cite{dkk}, Wigert had obtained the corresponding transformation for $a=0$ which is also derived in \cite{dkk} from \eqref{extendedideqn}. Using the concept of transseries, Dorigoni and Kleinschmidt \cite[Equation (2.43)]{dorigoni} considered the special case of \eqref{extendedideqn} when $a$ is a negative even integer.
	
	The main goal of this paper is to generalize \eqref{maineqn} and \eqref{extendedideqn} in the setting of $\sum_{n=1}^\infty \sigma_{a}^{(N)}(n)e^{-ny}$. This is done in Theorems \ref{In terms of G} and \ref{Analytic continuation} respectively. Theorem \ref{Analytic continuation} involves a new generalization of the generalized Bessel function ${}_{\mu}K_{\nu}(z, w)$ whose asymptotic behavior as $z\to\infty$ is derived in Theorem \ref{Asymptotic expansion of muKnuN}. 
	
	Several important special cases of Theorems \ref{In terms of G} and \ref{Analytic continuation} are derived in the second part of the paper along with interesting applications of the some of them. These special cases include a new generalization of Ramanujan's formula for $\zeta(2m+1)$ (Theorem \ref{zetagen3}), an identity associated with extended higher Herglotz functions which was recently obtained in \cite{hhf1} (Corollary \ref{gencompanion}), an identity equivalent to the generalized Dedekind eta-transformation (Corollary \ref{zagiereqvt}) and a transformation of Wigert  (Corollary \eqref{neven}). The important special case $a=2Nm-1+N$ of Theorem \ref{In terms of G}, given in Theorem \ref{thm_resultt}, naturally leads us to the interesting new construct - \emph{generalized power partitions with ``$n^{2N-1}$ copies of $n^{N}$''}. The asymptotic expansion of the generating function of these partitions as $q\to1^{-}$ is derived in Corollary \ref{asym F_N(q)}. Wright's corresponding result for the plane partition generating function \cite[Lemma 1]{wright} is the special case $N=1$ of this result. 
	
	 The aforementioned special cases of Theorems \ref{In terms of G} and \ref{Analytic continuation} generalize almost all of the results in \cite{dkk} and reduce to the latter when $N=1$. Several of them employ Theorem \ref{meijergsim} which is a new result reducing the Meijer $G$-function involved in them to a sum of   ${}_1F_{q}$ and elementary functions.
	
	%\begin{proof}
	%We apply duplication and reflection formula respectively in the gamma and the cosine factors which evaluates the limit as 
	%\begin{align}
	%\lim_{a\to -(2\ell+1)} \Gamma(a)\cos\left(\frac{\pi a}{2}\right) &= \lim_{a\to -(2\ell+1)} \frac{\Gamma\(\frac{a}{2}\)\Gamma\(\frac{1}{2}+ \frac{a}{2}\)}{\sqrt{\pi}2^{1-a}} \times \frac{\pi}{\Gamma\(\frac{1}{2}+ \frac{a}{2}\)\Gamma\(\frac{1}{2}- \frac{a}{2}\)}\nonumber\\
	%&= \lim_{a\to -(2\ell+1)}\sqrt{\pi} 2^{a-1} \frac{\Gamma\(\frac{a}{2}\)}{\Gamma\(\frac{1}{2}- \frac{a}{2}\)} = \frac{\sqrt{\pi}}{2^{2\ell+2}}\frac{\Gamma\(-\ell-\frac{1}{2}\)}{\Gamma(\ell+1)}\nonumber\\
	%&= \frac{\sqrt{\pi}}{2^{2\ell+2} \ell!} \frac{\Gamma\(\frac{1}{2}\)}{(-1/2)(-3/2)\cdots (-\ell-1/2)}\nonumber\\
	%&= \frac{\sqrt{\pi}}{2^{2\ell+2} \ell!} \frac{(-1)^{\ell+1} 2^{\ell+1} \Gamma\(\frac{1}{2}\)}{1\cdot 3 \cdots (2\ell+1)}\nonumber\\
	%&=\frac{\sqrt{\pi}}{2^{2\ell+2} \ell!} \frac{(-1)^{\ell+1} 2^{\ell+1}(2\cdot 4 \cdots 2\ell) \Gamma\(\frac{1}{2}\)}{(2\ell+1)!}\nonumber\\
	%&=\frac{\sqrt{\pi}}{2^{2\ell+2} \ell!} \frac{(-1)^{\ell+1} 2^{2\ell+1}\ell! \Gamma\(\frac{1}{2}\)}{(2\ell+1)!} = \frac{(-1)^{\ell+1}\pi}{2(2\ell+1)!}
	%\end{align}
	%This completes the proof of the lemma.
	\section{Main results}\label{mr}
	
Before we state our results, we define certain special functions appearing in our work. We begin with the Meijer $G$-function \cite[p.~415, Definition 16.17]{NIST}.
	Let $m,n,p,q$ be integers such that $0\leq m \leq q$, $0\leq n \leq p$. Let $a_1, \cdots, a_p$ and $b_1, \cdots, b_q$ be complex numbers such that $a_i - b_j \not\in \mathbb{N}$ for $1 \leq i \leq n$ and $1 \leq j \leq m$.   Then the Meijer $G$-function is defined by 
	\begin{align}\label{MeijerG}
		G_{p,q}^{\,m,n} \!\left(  \,\begin{matrix} a_1,\cdots , a_p \\ b_1, \cdots b_m; b_{m+1}, \cdots, b_q \end{matrix} \; \Big| X   \right) := \frac{1}{2 \pi i} \int_L \frac{\prod_{j=1}^m \Gamma(b_j - w) \prod_{j=1}^n \Gamma(1 - a_j +w) X^w  } {\prod_{j=m+1}^q \Gamma(1 - b_j + w) \prod_{j=n+1}^p \Gamma(a_j - w)}\, dw,
	\end{align}
	where $L$  goes from $-i \infty$ to $+i \infty$ and separates the poles of $\Gamma(1-a_j+s)$  from the poles of $\Gamma(b_j-s)$.  
	The integral converges  absolutely if $p+q  < 2(m+n)$ and $|\arg(X)| < (m+n - \frac{p+q}{2}) \pi$.  In the case $p+q  = 2(m+n)$ and $\arg(X)=0$,  the integral converges absolutely if $ \left(  \re(w) + \frac{1}{2} \right) (q-p) > \re(\psi ) +1$,  where $\psi = \sum_{j=1}^q b_j - \sum_{j=1}^p a_j$.  
	
	A special case of the Meijer $G$-function which we will encounter in our work is the following:
	\begin{equation}\label{muknug}
		{}_{\mu}K_{\nu}^{(N)}(z, w) := 2^{\mu + \frac{2}{N}-1}\pi^{(1-N)\nu}z^{w+\nu-\frac{2}{N}} G_{1, \, \, 2N+1}^{N+1, \, \, 1}\bigg( \begin{matrix}
			1+\frac{1}{2N}-\mu-\nu-w \\
			\frac{1}{2}+\frac{1}{2N}-\nu, \langle \frac{i}{N} \rangle_{i=1}^N; 1+\frac{1}{2N}-w, \langle 1+\frac{3}{2N} - \frac{i}{N} \rangle_{i=2}^N 
		\end{matrix} \bigg | \frac{z^2}{4} \bigg).
	\end{equation}
	It can be easily checked that ${}_{\mu}K_{\nu}^{(1)}(z, w)$ reduces to the generalized modified Bessel function of the second kind recently studied in \cite[Equation (1.17)]{dkk}.
	
	We are now ready to state our first result.
	\begin{theorem}\label{In terms of G}
		Let $N\in\mathbb{N}$. Let $\sigma_a^{(N)}(n)$ be defined in \eqref{defaf}. For any complex number $a$ with $\re(a)>-1$ and $\re(y)>0$,
		\begin{align}\label{In terms of G_eqn}
			\sum_{n=1}^\infty \sigma_a^{(N)}(n) e^{-ny}  &= -\frac{\zeta(-a)}{2} + \frac{\zeta(N-a)}{y} + \frac{1}{N} \frac{\Gamma \left(\frac{1+a}{N}\right) \zeta \left(\frac{1+a}{N}\right)}{y^{\frac{1+a}{N}}}\nonumber\\
			&+ \frac{N^{3/2}}{\pi^{5/2}} \left( \frac{2\pi}{y} \right)^{\frac{a-1}{N}} \sum_{n=1}^\infty \frac{S_a^{(N)}(n)}{n^{2/N}} G_{1, \, \, 2N+1}^{N+1, \, \, 1} \left(\begin{matrix}
				\frac{1}{2} + \frac{1-a}{2N}\\
				\frac{1}{2} + \frac{1-a}{2N}, \langle \frac{i}{N}\rangle; \langle 1+\frac{3}{2N}-\frac{i}{N} \rangle
			\end{matrix} \Bigg | \frac{4 \pi^{2N+2} n^2}{y^2 N^{2N}} \right),
		\end{align}
		where, here and in the sequel\footnote{Whenever the number of terms in the sequence $\langle \cdot \rangle$ is $M$, $M\neq N$, we explicitly write $\langle \cdot \rangle|_{i=1}^{M}$. }, $\langle \cdot \rangle$ denotes the sequence of $N$ terms running over $i$ with $1 \leq i \leq N$. 
		
		Equivalently, using \eqref{muknug},
			\begin{align}\label{Using muKnuN}
			\sum_{n=1}^\infty \sigma_a^{(N)}(n) e^{-ny} +\frac{\zeta(-a)}{2} - \frac{\zeta(N-a)}{y} - \frac{1}{N} \frac{\Gamma \left(\frac{1+a}{N}\right) \zeta \left(\frac{1+a}{N}\right)}{y^{\frac{1+a}{N}}} = \frac{2 (2\pi)^{\frac{1}{N}-\frac{1}{2}} N^{\frac{a-1}{2}}}{y^{\frac{1}{N}+\frac{a}{2N}}} \sum_{n=1}^\infty \frac{S_a^{(N)}(n)}{n^{\frac{a}{2N}}}{}_{\frac{1}{2}} K_{\frac{a}{2N}}^{(N)}\left(\frac{4\pi^{N+1}n}{yN^N}, 0\right).
		\end{align}
	\end{theorem}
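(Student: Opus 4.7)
The natural starting point is to apply the infinite Vorono\"i summation formula \eqref{ainfinite} to the test function $f(x)=e^{-xy}$, whose Mellin transform is $F(s)=\Gamma(s)/y^{s}$, with $f(0^{+})=1$ and $\int_{0}^{\infty}f(x)\,dx=1/y$. The three explicit terms on the right of \eqref{In terms of G_eqn} then drop out immediately from the first three terms of \eqref{ainfinite}. Strictly speaking $e^{-xy}$ is not Schwartz on $\mathbb{R}$, so either the finite Vorono\"i formula (stated earlier in the excerpt) is invoked first and a limiting argument $\alpha\to 0^{+}$, $\beta\to\infty$ is carried out, or $f$ is approximated by a family of Schwartz functions and the tails are controlled using the bounds on $H_{a}^{(N)}$ established in \cite{DMV}. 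The remaining task is to evaluate
\[
J_{n}(y) := \int_{0}^{\infty} H_{a}^{(N)}\!\left((2\pi)^{1+1/N}(nx)^{1/N}\right) x^{(1+a)/N-1} e^{-xy}\, dx
\]
and match it with the Meijer $G$-function on the right of \eqref{In terms of G_eqn}.

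For this I would first compute the Mellin transform of $H_{a}^{(N)}$ directly from its double-cosine definition \eqref{HaNx}: interchanging orders, substituting $u=t^{-N}$, and using $\int_{0}^{\infty}u^{\sigma-1}\cos u\,du=\Gamma(\sigma)\cos(\pi\sigma/2)$ gives
\[
\mathcal{M}\!\left(H_{a}^{(N)}\right)(s) = \frac{1}{N}\,\Gamma(s)\cos\!\left(\frac{\pi s}{2}\right)\,\Gamma\!\left(\frac{s+N-a-1}{N}\right)\cos\!\left(\frac{\pi(s+N-a-1)}{2N}\right),
\]
valid on an appropriate vertical strip (this computation is essentially recorded in \cite{DMV}). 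Mellin-inverting $H_{a}^{(N)}$, substituting the resulting contour integral into $J_{n}(y)$, and swapping the order of integrations (justified by absolute convergence on the chosen vertical line) reduces the inner $x$-integral to $\Gamma((1+a-s)/N)\,y^{-(1+a-s)/N}$. This expresses $J_{n}(y)$ as an explicit Mellin-Barnes integral.

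The main step is to put this Mellin-Barnes integrand into Meijer-$G$ shape. I would expand $\Gamma(s)$ via the Gauss-Legendre multiplication formula
\[
\Gamma(s) = (2\pi)^{-(N-1)/2}\, N^{s-1/2}\prod_{k=0}^{N-1}\Gamma\!\left(\frac{s+k}{N}\right),
\]
and rewrite each cosine through $\cos(\pi z)=\pi/\bigl(\Gamma(\tfrac12+z)\Gamma(\tfrac12-z)\bigr)$, so that the entire integrand is a ratio of Gammas times a power. The substitution $s=Nw+(a+1)$ (equivalently $w=(s-a-1)/N$) lines the Gamma arguments up with the Meijer parameters $\tfrac12+\tfrac{1-a}{2N}$, $\{i/N\}_{i=1}^{N}$ and $\{1+\tfrac{3}{2N}-\tfrac{i}{N}\}_{i=1}^{N}$ appearing in the statement. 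Collecting the powers of $2\pi$, $N$, $y$ and $n$ produced by the multiplication formula and by the substitution then yields precisely the stated prefactor $\tfrac{N^{3/2}}{\pi^{5/2}}(2\pi/y)^{(a-1)/N}n^{-2/N}$, establishing \eqref{In terms of G_eqn}.

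The equivalent form \eqref{Using muKnuN} is finally read off from the definition \eqref{muknug} of ${}_{\mu}K_{\nu}^{(N)}(z,w)$ with $\mu=\tfrac12$, $\nu=\tfrac{a}{2N}$, $w=0$ and $z=4\pi^{N+1}n/(yN^{N})$. The principal obstacle is the bookkeeping in the Meijer-$G$ identification: correctly distributing the Gamma factors produced by the multiplication and reflection formulas into the numerator slots $\{b_{1},\dots,b_{N+1}\}$ and the denominator slots $\{b_{N+2},\dots,b_{2N+1}\}$, while tracking the accompanying algebraic prefactors. A secondary difficulty is the analytic justification of the swap of summation and integration needed to apply \eqref{ainfinite} to $f(x)=e^{-xy}$, which should follow from the growth bounds on $H_{a}^{(N)}$ and $S_{a}^{(N)}(n)$ in \cite{DMV} combined with the exponential decay of $f$.
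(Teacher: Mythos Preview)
Your approach is essentially the paper's: start from the Vorono\"{\dotlessi} summation formula of \cite{DMV} with $f(x)=e^{-xy}$, express the kernel integral as a Mellin--Barnes integral, then use the Gauss multiplication formula together with $\cos(\pi z)=\pi/\bigl(\Gamma(\tfrac12+z)\Gamma(\tfrac12-z)\bigr)$ to cast it as a Meijer $G$-function. The paper in fact quotes \cite[Equation~(6.16)]{DMV} directly for $f(x)=e^{-xy}$, so the Schwartz-class issue you raise is already absorbed there; and instead of recomputing the Mellin transform of $H_{a}^{(N)}$ from scratch, it lifts the Mellin--Barnes form of $\mathcal{I}$ from \cite[pp.~40--41]{DMV}. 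Its chain of substitutions ($s\mapsto 2s$, then $s=-w+1/N$) differs from your $s=Nw+(a+1)$ but lands on the same $G$-parameters.

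One step you omit: the Vorono\"{\dotlessi} formula \eqref{ainfinite} is stated only for $-1<\re(a)<N$ and $y>0$, so your argument as written proves \eqref{In terms of G_eqn} only on that strip. To reach the full half-plane $\re(a)>-1$ claimed in the theorem, the paper shows uniform convergence of the series on the right of \eqref{Using muKnuN} for $\re(a)\geq-1+\epsilon$ by combining the asymptotic ${}_{1/2}K_{a/2N}^{(N)}(z,0)=O(z^{-1-1/N-\re(a)/2N})$ from Theorem~\ref{Asymptotic expansion of muKnuN} with the elementary bound $|S_{a}^{(N)}(n)n^{-a/2N}|\ll n^{\max(-\re(a)/2N,\,\re(a)/2N+1/N-1)+\epsilon}$, and then invokes Weierstrass' theorem to analytically continue in $a$; a parallel argument extends from $y>0$ to $\re(y)>0$. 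You should add this continuation step.
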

	When $N=1$, the above result gives Theorem 2.4 from \cite{dkk} as a special case. The above theorem can be analytically continued in a wider region of the $a$-complex plane. This requires the following asymptotic formula for $	{}_{\mu}K_{\nu}^{(N)}(z, w)$ for large values of $z$.
	\begin{theorem}\label{Asymptotic expansion of muKnuN}
Let ${}_{\mu}K_{\nu}^{(N)}(z, w)$ be defined in \eqref{muknug}, where $|\arg(z)|\leq\rho\pi-\delta$ with  $\rho>0,\ \delta\geq0$.	Let $m\in\mathbb{N}\cup\{0\}$. As $z\rightarrow \infty$,
		\begin{align}\label{estk}
			{}_\mu K_{\nu}^{(N)}(z, w) &= \frac{2^{3\mu+2\nu+2w+\frac{1}{N}-1} \pi^{(1-N)\nu-N} }{z^{2\mu+\nu+w+{1\over N}}}\sin (\pi(\mu+\nu))\prod_{i=2}^{N}\sin \left(\pi \left( \mu+\nu+w-{i-1\over N}\right) \right) \nonumber  \\
			&\times\sum_{k=0}^{m}{(-1)^{k(N+1)+N} \over k!}\prod_{i=1}^{2N-1}\Gamma\left( {i\over 2N }+\mu+\nu+w+k\right)  \Gamma \left({1\over 2}+\mu+w +k\right) \Gamma\left(1+\mu+\nu +k\right)\left({z \over 2} \right)^{-2k} \nonumber\\
			&+ \mathcal{O}\left({z^{-2m-2-{1 \over N}-2\mu-\nu-w}} \right).
		\end{align}
	\end{theorem}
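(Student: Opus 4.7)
The strategy is to shift the contour in the Mellin--Barnes representation of the Meijer $G$-function in \eqref{muknug}, collecting residues of the poles that lie to the left of the original contour, and then bounding the resulting tail integral via Stirling's formula. Writing out \eqref{MeijerG} with integration variable $s$ and using the particular choice of parameters in \eqref{muknug}, the only ``ascending'' $\Gamma$-factor in the integrand is $\Gamma(\mu+\nu+w-\tfrac{1}{2N}+s)$, whose poles are located at $s=s_k:=\tfrac{1}{2N}-\mu-\nu-w-k$ for $k=0,1,2,\dots$ Since $X=z^2/4\to\infty$, $|X^s|$ decays as $\mathrm{Re}(s)\to-\infty$, so we push the contour leftward past exactly the first $m+1$ of these poles.

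The residue at $s=s_k$ equals $(-1)^k/k!$ times the ratio of the remaining gamma factors evaluated at $s_k$. Direct substitution gives on the numerator side $\Gamma(\tfrac{1}{2}+\mu+w+k)\prod_{i=1}^{N}\Gamma(\tfrac{2i-1}{2N}+\mu+\nu+w+k)$, and on the denominator side $\Gamma(-\mu-\nu-k)\prod_{i=2}^{N}\Gamma(\tfrac{i-1}{N}-\mu-\nu-w-k)$. I would then apply the reflection formula $\Gamma(z)\Gamma(1-z)=\pi/\sin(\pi z)$ to each of the $N$ denominator factors; this produces $\sin(\pi(\mu+\nu))\prod_{i=2}^{N}\sin(\pi(\mu+\nu+w-\tfrac{i-1}{N}))$ (absorbing the $\pi^{-N}$ into the prefactor and tracking a sign $(-1)^{(k+1)N}$), together with the reciprocals $\Gamma(1+\mu+\nu+k)$ and $\Gamma(1-\tfrac{i-1}{N}+\mu+\nu+w+k)=\Gamma(\tfrac{N-i+1}{N}+\mu+\nu+w+k)$. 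As $i$ runs through $2,\ldots,N$, the latter produces precisely the even-indexed arguments $\tfrac{2j}{2N}+\mu+\nu+w+k$ with $j=1,\dots,N-1$ missing from the numerator product, so that everything assembles into $\prod_{j=1}^{2N-1}\Gamma(\tfrac{j}{2N}+\mu+\nu+w+k)$. Combined with the prefactor $2^{\mu+2/N-1}\pi^{(1-N)\nu}z^{w+\nu-2/N}$ and the factor $X^{s_k}=(z/2)^{1/N-2\mu-2\nu-2w-2k}$, this reproduces exactly the stated finite sum, with the sign $(-1)^{k(N+1)+N}$.

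For the error, after the shift the contour $L_m$ has $\mathrm{Re}(s)=\tfrac{1}{2N}-\mu-\nu-w-m-1+\epsilon$ for a small $\epsilon>0$. Along this line the product of $N+2$ gamma factors in the numerator and $N$ gamma factors in the denominator behaves, via Stirling, like a polynomial in $|s|$ times $\exp(-\tfrac{\pi N}{2}|\mathrm{Im}(s)|)$, while $X^{s}$ contributes $|X|^{\mathrm{Re}(s)}e^{-\mathrm{Im}(s)\arg X}$. The restriction $|\arg z|\leq\rho\pi-\delta$ keeps $|\arg X|$ bounded away from $\tfrac{\pi N}{2}$ (for the relevant range of parameters) and guarantees that the integral converges absolutely with a uniform bound $O(|X|^{1/(2N)-\mu-\nu-w-m-1})$. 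Multiplication by the prefactor converts this into the claimed $O(z^{-2m-2-1/N-2\mu-\nu-w})$ error.

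The bulk of the work is bookkeeping: verifying that the reflected denominator gammas combine with the surviving numerator gammas to give the clean block $\prod_{j=1}^{2N-1}\Gamma(\tfrac{j}{2N}+\mu+\nu+w+k)$, and that the accumulated signs of the reflection steps produce the factor $(-1)^{k(N+1)+N}$; the contour-shift and Stirling estimate then proceed exactly as in the $N=1$ case treated in \cite{dkk}, which the argument specializes to when $N=1$. The only genuinely new analytic point is ensuring that the sector condition on $\arg z$ still yields the uniform tail bound when $N>1$, which follows by adjusting the contour's slope if necessary so that the exponential decay of the integrand dominates the growth of $|X^s|$ throughout the allowed range $|\arg z|\leq\rho\pi-\delta$.
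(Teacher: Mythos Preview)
Your approach is correct and reaches the same expansion, but the route differs from the paper's. The paper does not shift the Mellin--Barnes contour directly; instead it invokes a ready-made asymptotic theorem for Meijer $G$-functions from Luke \cite[p.~179, Theorem 2]{Luke}, namely
\[
G_{p,q}^{\,m,n}(z)\sim\sum_{j=1}^{n}e^{-i\pi(\nu+1)a_j}\Delta_q^{m,n}(j)\,E_{p,q}\!\left(ze^{i\pi(\nu+1)}\big\|a_j\right),
\]
and then substitutes the specific parameters $m=N+1$, $n=p=1$, $q=2N+1$ to compute $\Delta$ and $E$. After that point the two arguments coincide: the paper applies the reflection formula to $\Gamma(-\mu-\nu-k)$ and $\Gamma(\tfrac{i-1}{N}-\mu-\nu-w-k)$ and merges the resulting factors with $\prod_{i=1}^N\Gamma(\tfrac{2i-1}{2N}+\mu+\nu+w+k)$ into $\prod_{i=1}^{2N-1}\Gamma(\tfrac{i}{2N}+\mu+\nu+w+k)$, exactly as you do. In effect you are reproving the special case of Luke's theorem from scratch; this is more self-contained, while the paper's route is shorter and offloads the tail estimate entirely onto the cited result.

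One small correction in your error analysis: with $m+n=N+2$ gammas in the numerator and $q-m=N$ in the denominator, Stirling gives net decay $\exp(-\pi|\mathrm{Im}(s)|)$, not $\exp(-\tfrac{\pi N}{2}|\mathrm{Im}(s)|)$; equivalently $m+n-\tfrac{p+q}{2}=1$, so the natural sector is $|\arg X|<\pi$. This does not affect the validity of your bound---you still obtain uniform convergence of the shifted integral and hence the stated $O(z^{-2m-2-1/N-2\mu-\nu-w})$ remainder---but the constant governing the admissible range of $\arg z$ should be adjusted accordingly.
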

	Using the above result, we obtain the analytic continuation of Theorem \ref{In terms of G} for any complex value of $a$. This is stated in the following theorem of which Theorem 2.5 of \cite{dkk} is the special case $N=1$.
	\begin{theorem}\label{Analytic continuation}
		Let $m \in \N \cup \lbrace 0 \rbrace$ and $\re(y)>0$.  For $\re(a)>-(2m+2)N-1$, $N\in\mathbb{N}$, the following identity holds :
		\begin{align}\label{Eqn:Analytic continuation}
			&\sum_{n=1}^\infty \sigma_a^{(N)}(n) e^{-ny}  + \frac{\zeta(-a)}{2} - \frac{\zeta(N-a)}{y} - \frac{1}{N} \frac{\Gamma \left(\frac{1+a}{N}\right) \zeta \left(\frac{1+a}{N}\right)}{y^{\frac{1+a}{N}}} \nonumber\\
			&= \frac{2 (2\pi)^{\frac{1}{N}-\frac{1}{2}} N^{\frac{a-1}{2}}}{y^{\frac{1}{N}+\frac{a}{2N}}} \sum_{n=1}^\infty \frac{S_a^{(N)}(n)}{n^{\frac{a}{2N}}} \bigg [{}_{\frac{1}{2}} K_{\frac{a}{2N}}^{(N)}\left(\tfrac{4\pi^{N+1}n}{yN^N}, 0\right) - \frac{2^{\frac{1}{2} + \frac{a+1}{N}} \pi^{\frac{(1-N)a}{2N} -N}} {\left(\frac{4\pi^{N+1}n}{yN^N}\right)^{1+\frac{1}{N}+\frac{a}{2N}}}   \tfrac{\sin\(\frac{\pi}{2}(N-a)\)}{2^{N-1}}C_{m,N}\left(\tfrac{1}{2}, \tfrac{a}{2N}, 0, \tfrac{4\pi^{N+1}n}{yN^N}\right)\bigg] \nonumber\\
			&\quad+ \frac{y}{2\pi^{2}} \sum_{k=0}^m \left(-\frac{y^2}{4\pi^2}\right)^k   \zeta(-2kN-N-a)\zeta(2k+2),
		\end{align}
		where
		\begin{align}\label{cmn}
			C_{m, N}(\mu, \nu, w; z) &:= \sum_{k=0}^m \frac{(-1)^{k(N+1)+N}}{k!}  \Gamma \left(\tfrac{1}{2}+\mu+w +k\right) \Gamma\left(1+\mu+\nu +k\right)\prod_{i=1}^{2N-1}\Gamma\left( \tfrac{i}{2N} +\mu+\nu+w+k\right)\left(\frac{z}{2} \right)^{-2k}.
		\end{align}
	\end{theorem}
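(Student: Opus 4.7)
The plan is to bootstrap from Theorem \ref{In terms of G}, valid for $\re(a)>-1$, into the larger half-plane $\re(a)>-(2m+2)N-1$ by subtracting off the first $m+1$ terms of the asymptotic expansion of ${}_{\frac{1}{2}}K_{\frac{a}{2N}}^{(N)}\!\bigl(\tfrac{4\pi^{N+1}n}{yN^N},0\bigr)$ produced by Theorem \ref{Asymptotic expansion of muKnuN}. Concretely, starting from the form \eqref{Using muKnuN}, I would write
\[
{}_{\tfrac12}K_{\tfrac{a}{2N}}^{(N)}(z,0) \;=\; \mathcal A_{m,N}(z) \;+\; \mathcal R_{m,N}(z),
\]
where $\mathcal A_{m,N}(z)$ is the explicit asymptotic sum (the bracketed prefactor times $C_{m,N}$ from \eqref{cmn} with $\mu=\tfrac12,\ \nu=\tfrac{a}{2N},\ w=0$) and $\mathcal R_{m,N}(z)=\mathcal O(z^{-2m-3-1/N-a/(2N)})$. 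Adding and subtracting the sum corresponding to $\mathcal A_{m,N}$ inside \eqref{Using muKnuN} rewrites the right-hand side as the quoted "remainder series" plus a finite sum of closed-form pieces that I next compute.

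For the closed-form pieces, I would use the Dirichlet-series identity
\[
\sum_{n=1}^{\infty} \frac{S_a^{(N)}(n)}{n^{s}} \;=\; \zeta(Ns)\,\zeta\!\left(s+1-\tfrac{1+a}{N}\right),
\]
which follows directly from \eqref{defbf}. Each $k$-th term of $C_{m,N}$ contributes an extra $z^{-2k}$; combined with the $n^{-a/(2N)}$ and the $z^{-(1+1/N+a/(2N))}$ from the prefactor, the exponent of $n$ becomes $s=\tfrac{a+N+1}{N}+2k$, so the Dirichlet series evaluates to $\zeta(a+N+1+2kN)\,\zeta(2k+2)$. At this point I would apply the functional equation
\[
\zeta(a+N+1+2kN)=2^{a+N+1+2kN}\pi^{a+N+2kN}\sin\!\bigl(\tfrac{\pi(a+N+1+2kN)}{2}\bigr)\Gamma(-a-N-2kN)\,\zeta(-2kN-N-a)
\]
to convert this into a $\zeta$-value at a negative argument. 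Collecting the powers of $2,\pi,N,y$ against the prefactors already present, using $\sin(\pi(N-a)/2)$ together with the $\sin$ coming from the functional equation and the $(-1)^{k(N+1)+N}$ sign inside $C_{m,N}$, and invoking the $\Gamma$-duplication identity on $\Gamma(-a-N-2kN)$ against the $2N-1$ gammas $\prod_{i=1}^{2N-1}\Gamma(\tfrac{i}{2N}+\tfrac{1}{2}+\tfrac{a}{2N}+k)$, every extraneous factor is expected to collapse onto the clean shape $\tfrac{y}{2\pi^{2}}\bigl(-\tfrac{y^2}{4\pi^2}\bigr)^{k}\zeta(-2kN-N-a)\zeta(2k+2)$.

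Once these identifications are made, both sides of \eqref{Eqn:Analytic continuation} agree for $\re(a)>-1$. The left side is meromorphic in $a$ with only the explicit poles absorbed in the subtracted terms, and the right side is now analytic on $\re(a)>-(2m+2)N-1$: indeed the $n$-series converges absolutely in this larger region precisely because the Dirichlet series $\sum_n S_a^{(N)}(n)n^{-s}$ converges at $s=\tfrac{a+N+1}{N}+2(m+1)$, which is exactly the threshold forced by the size of $\mathcal R_{m,N}$. Analytic continuation in $a$ then gives the identity on the claimed strip.

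The main obstacle I anticipate is not the conceptual split (which is dictated by Theorem \ref{Asymptotic expansion of muKnuN}) but the explicit bookkeeping in the last step: verifying that the products of trigonometric factors $\sin(\pi(N-a)/2)$ with $\sin(\pi(a+N+1+2kN)/2)$, the signs $(-1)^{k(N+1)+N}$, and the $2N-1$ gammas telescope—via Legendre/Gauss multiplication—against the lone $\Gamma(-a-N-2kN)$ and the various $2^{\bullet}\pi^{\bullet}N^{\bullet}y^{\bullet}$ to yield exactly the compact expression $\frac{y}{2\pi^{2}}\sum_{k=0}^{m}\bigl(-\tfrac{y^2}{4\pi^2}\bigr)^{k}\zeta(-2kN-N-a)\zeta(2k+2)$ with the correct overall sign and normalization. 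This is delicate but mechanical, and for $N=1$ it should reduce to the simplification already carried out in the proof of Theorem 2.5 of \cite{dkk}, providing a useful sanity check.
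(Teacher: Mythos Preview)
Your proposal is correct and follows essentially the same route as the paper: start from \eqref{Using muKnuN} valid for $\re(a)>-1$, add and subtract the asymptotic truncation $C_{m,N}$ inside the summand, evaluate the subtracted piece via the Dirichlet series \eqref{Sdirichlet} and the functional equation \eqref{zetafe} to obtain the finite $\zeta$-sum, then invoke Theorem \ref{Asymptotic expansion of muKnuN} together with the divisor bound \eqref{Bound of divisor function} to show the remainder series converges uniformly on $\re(a)\geq -(2m+2)N-1+\epsilon$, and finish by analytic continuation. The only cosmetic difference is that the paper streamlines the bookkeeping you flag as the main obstacle by first passing to the simplified form \eqref{cmnsimplified} of $C_{m,N}$ (via Gauss multiplication) before applying \eqref{zetafe}, which makes the collapse to $\tfrac{y}{2\pi^2}(-y^2/4\pi^2)^k\zeta(-2kN-N-a)\zeta(2k+2)$ immediate.
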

	\begin{remark}
		Employing \eqref{gmf} below, we see that 
		\begin{align}\label{cmnsimplified}
			C_{m, N}(\mu, \nu, w; z)&=\frac{(2\pi)^{N-\frac{1}{2}}}{(2N)^{\frac{1}{2}+2N\left(\mu+\nu+w\right)}}\sum_{k=0}^m \frac{(-1)^{k(N+1)+N}}{k!}\frac{\Gamma\left(\frac{1}{2}+\mu+w+k \right)\Gamma\left(1+\mu+\nu +k\right)}{\Gamma\left(1+\mu+\nu+w +k\right)}\nonumber\\
			&\qquad\qquad\qquad\qquad\qquad\quad\times\frac{\G(1+2N(\mu+\nu+w+k))}{(2N)^{2Nk}}\left(\frac{z}{2} \right)^{-2k}.
		\end{align}
	\end{remark}

Our next result reduces the Meijer $G$-function occurring in Theorem \ref{In terms of G} to a sum of elementary functions and the generalized hypergeometric function ${}_1F_{2N}$. This will be instrumental in the proofs of the special cases of Theorem \ref{Analytic continuation}. 
To that end, first define $b$ by
	\begin{equation}\label{defb}
		b=b(N):=\begin{cases}
			0,\hspace{1mm}\text{if}\hspace{1mm}N\hspace{1mm}\text{is odd},\\
			1,\hspace{1mm}\text{if}\hspace{1mm}N\hspace{1mm}\text{is even}.
		\end{cases}
	\end{equation}
	For $k\in\N$, define
	\begin{equation}\label{defE}
		\mathscr{E}_{a, N, z}(k, b):=(-1)^k \exp \left(2N e^{\frac{\pi i (2k+b)}{2N}} z^{1\over 2N} + \frac{\pi i (2k+b) (a+1)}{2N} \right) .
	\end{equation}
	
	\begin{theorem}\label{meijergsim}
		Let $a, z\in\C$, $N\in\mathbb{N}$. Then
		\begin{align}\label{meijergsimeqn}
			&G_{1, \, \, 2N+1}^{N+1, \, \, 1} \left(\begin{matrix}
				\frac{1}{2} + \frac{1-a}{2N}\\
				\frac{1}{2} + \frac{1-a}{2N}, \langle \frac{i}{N}\rangle; \langle 1+\frac{3}{2N}-\frac{i}{N} \rangle
			\end{matrix} \Bigg | \, z \right) \nonumber\\
			&= \frac{ N^{N-a - \frac{1}{2}} \Gamma\left(\frac{1-N+a}{2}\right)z^{\frac{1}{2}+\frac{1-a}{2N}} }{\Gamma\left(\frac{N-a}{2}\right)} 
			{}_1F_{2N}\left( \left. \begin{matrix}
				1 \\ \left\langle \frac{1}{2} - \frac{a+1}{2N} + \frac{i}{2N} \right\rangle_{i=1}^{2N}
			\end{matrix}\right| (-1)^{N+1} z \right) + A_{a, N}(z),
		\end{align}
		where $A_{a, N}(z)$ is given by
		\begin{align}\label{aanzodd}
			A_{a, N}(z) :=  
			\frac{(-1)^{\frac{N+1}{2}}z^{1\over N}}{2 \sin \left( \frac{\pi a}{2} \right)}\sqrt{\frac{\pi}{N}} \left [  \sum\limits_{k=0}^{\frac{N-1}{2}} \mathscr{E}_{a, N, z}(k, 0)+  \sum\limits_{k= \frac{N+1}{2}}^{\frac{3N-1}{2}} e^{-\pi i a}\mathscr{E}_{a, N, z}(k, 0)+  \sum\limits_{k= \frac{3N+1}{2}}^{2N-1} e^{-2\pi i a}\mathscr{E}_{a, N, z}(k, 0) \right ], 
		\end{align}
		for $N$ odd, and by
		\begin{align}\label{aanzeven}
			A_{a, N}(z) :=
			\frac{(-1)^{\frac{N+1}{2}}z^{\frac{1}{N}} }{2 \cos \left( \frac{\pi a}{2} \right)}\sqrt{\frac{\pi}{N}} \left [  \sum\limits_{k=0}^{\frac{N}{2}-1} \mathscr{E}_{a, N, z}(k, 1)-  \sum\limits_{k= \frac{N}{2}}^{\frac{3N}{2}-1} e^{-\pi i a}\mathscr{E}_{a, N, z}(k, 1)+  \sum\limits_{k= \frac{3N}{2}}^{2N-1} e^{-2\pi i a}\mathscr{E}_{a, N, z}(k, 1) \right ],
		\end{align}
		for $N$ even.
	\end{theorem}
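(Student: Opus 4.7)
The plan is to represent the Meijer $G$-function on the left of \eqref{meijergsimeqn} by its Mellin--Barnes contour integral \eqref{MeijerG}, consolidate the Gamma factors using Gauss's multiplication formula and Euler's reflection, close the contour to the right, and then assemble the two resulting residue families into the two summands on the right. Because $a_1 = b_1 = 1/2 + (1-a)/(2N)$, the factor $\Gamma(b_1-w)\Gamma(1-a_1+w)$ collapses to $\pi/\sin(\pi(b_1-w))$ by reflection. Applying the multiplication formula $\prod_{j=0}^{N-1}\Gamma(x + j/N) = (2\pi)^{(N-1)/2} N^{1/2-Nx}\Gamma(Nx)$ to the two blocks of $N$ Gamma factors, namely $\prod_{i=1}^{N}\Gamma(i/N - w)$ (from $b_2,\ldots,b_{N+1}$) and $\prod_{i=1}^{N}\Gamma(-3/(2N) + i/N + w)$ (from $b_{N+2},\ldots,b_{2N+1}$), condenses the integrand to
\[
\frac{1}{2\pi i}\int_L \frac{\pi}{\sin(\pi(b_1-w))}\cdot N^{2Nw-3/2}\cdot\frac{\Gamma(1-Nw)}{\Gamma(Nw-1/2)}\cdot z^w\,dw.
\]

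Closing this contour to the right picks up two families of residues: (a) at $w = b_1 + k$ for $k\geq 0$, from the poles of $1/\sin(\pi(b_1-w))$; and (b) at $w = m/N$ for $m\geq 1$, from the poles of $\Gamma(1-Nw)$. The family (a) produces a power series in $z^{b_1+k}$ whose generic coefficient I would simplify via reflection applied to $\Gamma((1-N+a)/2 - Nk)$ together with Legendre duplication applied to $\Gamma((N-a)/2 + Nk)\Gamma((N-a+1)/2+Nk)$. After identifying the $2N$ denominator parameters $(N-a+i-1)/(2N)$ for $i=1,\ldots,2N$ with $\langle 1/2 - (a+1)/(2N) + i/(2N)\rangle_{i=1}^{2N}$, this family exactly reconstructs the ${}_1F_{2N}$ summand on the right of \eqref{meijergsimeqn} with the stated coefficient $N^{N-a-1/2}\Gamma((1-N+a)/2)z^{1/2+(1-a)/(2N)}/\Gamma((N-a)/2)$.

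For the family (b), the Legendre identity $\Gamma(m-1/2)(m-1)! = 2^{2-2m}\sqrt{\pi}\,(2m-2)!$ reduces the contribution to
\[
\sqrt{\pi/N}\cdot z^{1/N}\sum_{n=0}^\infty\frac{(-1)^n\,(2Nz^{1/(2N)})^{2n}}{(2n)!\,\sin\bigl(\pi b_1 - \pi(n+1)/N\bigr)}.
\]
Since the sine is periodic in $n$ with period $2N$, I would split $n = 2N\ell + r$ with $0\le r\le 2N-1$ and evaluate the inner sum over $\ell$ using the $4N$-th roots-of-unity identity
\[
\sum_{\ell\ge 0}\frac{y^{4N\ell+2r}}{(4N\ell+2r)!} = \frac{1}{4N}\sum_{k=0}^{4N-1} e^{-\pi i k r/N}\exp\bigl(e^{\pi i k/(2N)}\,y\bigr)
\]
at $y = 2Nz^{1/(2N)}$. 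Interchanging summations and carrying out the geometric $r$-sum against $(-1)^r/\sin(\pi b_1 - \pi(r+1)/N)$ forces $k$ to satisfy $k \equiv N+1 \pmod{2}$, which is the origin of the parameter $b$ in \eqref{defb} (so $b=0$ for $N$ odd and $b=1$ for $N$ even). The surviving $2N$ exponentials, collected with their phases, assemble into $A_{a,N}(z)$, and matching against \eqref{defE} is straightforward once one recognises that the prefactor $(-1)^k e^{\pi i(2k+b)(a+1)/(2N)}$ of $\mathscr{E}_{a,N,z}(k,b)$ is exactly the combined weight coming from $e^{-\pi i k r/N}$ and the $b_1$-dependent phase in the denominator sine. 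The three-range split with the multipliers $1,\ e^{-\pi ia},\ e^{-2\pi ia}$ in \eqref{aanzodd}--\eqref{aanzeven} emerges from the sign alternation of $\sin(\pi b_1 - \pi(r+1)/N)$ as $r+1$ crosses the thresholds $N$ and $2N$, via $\sin(\theta - j\pi) = (-1)^j\sin\theta$.

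The principal obstacle is the bookkeeping in this last step: explicitly carrying out the geometric $r$-sum to confirm the parity condition $k \equiv N+1\pmod 2$, correctly tracking where the factor $1/(2\sin(\pi a/2))$ (respectively $1/(2\cos(\pi a/2))$) arises, and partitioning the $2N$ surviving exponentials into the three prescribed index blocks. Although each individual ingredient (reflection, duplication, Gauss's multiplication, the roots-of-unity extraction) is elementary, combining them so that the parity split between \eqref{aanzodd} and \eqref{aanzeven} emerges cleanly requires care, and is the main place where the argument is technically involved rather than conceptually hard.
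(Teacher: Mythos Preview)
Your approach is correct in outline and genuinely different from the paper's. You collapse the Mellin--Barnes integrand first, via Gauss multiplication applied to the two $N$-blocks of Gamma factors, and then compute residues of the simplified kernel; the paper instead invokes Slater's theorem \cite[p.~145, Eq.~(7)]{Luke} to expand the $G$-function as a sum of $N+1$ hypergeometric series indexed by the distinct $b_j$'s, and only afterwards simplifies the Gamma coefficients. The first of those $N+1$ terms is the ${}_1F_{2N}$ summand, exactly as your residue family (a) produces; the remaining $N$ terms, which the paper packages as a sum $T_1$ over $j=1,\ldots,N$, correspond to your residue family (b).

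For that second family the paper does something rather different from your roots-of-unity filter. It converts each of the $N$ auxiliary ${}_1F_{2N}$'s to a finite sum of exponentials via the Prudnikov identity \eqref{prudnikovid} (with $q=2N$ and $m=2j-2$), then interchanges the $j$- and $k$-sums to isolate an inner sum $S=\sum_{j=1}^{N}(-1)^j e^{2\pi ikj/N}/\cos(\pi((a+1)/(2N)-j/N))$. The key device is the Chebyshev-type identity of Lemma~\ref{cheby}, namely $\cos(Nz)/\cos(z)=(-1)^{(N-1)/2}\sideset{}{''}\sum i^j e^{-ijz}$ for $N$ odd (and its sine/cosine analogue for $N$ even), which linearises $S$ so that the $j$-sum over $N$th roots of unity picks out a single surviving $\ell$ in each of three ranges of $k$. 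That is precisely where the three-block structure with multipliers $1,\ e^{-\pi ia},\ e^{-2\pi ia}$ in \eqref{aanzodd}--\eqref{aanzeven} comes from in the paper.

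Your route trades the reliance on Slater's theorem, \eqref{prudnikovid}, and Lemma~\ref{cheby} for a direct residue computation plus a $4N$-th-roots extraction; it is more self-contained but, as you rightly flag, the final bookkeeping (verifying the parity selection $k\equiv N+1\pmod 2$ and the emergence of the factors $1/(2\sin(\pi a/2))$ or $1/(2\cos(\pi a/2))$) is where all the work lives. The paper's route is more modular: each step is either a citation or a short Gamma-function manipulation, and Lemma~\ref{cheby} does the delicate parity separation cleanly in one line.
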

We now provide some important special cases of Theorems \ref{In terms of G} and \ref{Analytic continuation}. We begin with the special case $a=-2Nm-N$ of Theorem \ref{Analytic continuation} which gives a new generalization of Ramanujan's formula for $\zeta(2m+1)$, that is, \eqref{zetaodd}. 
 \begin{theorem}\label{zetagen3}
 	Let $N$ be an odd positive integer and $\a,\b>0$ such that $\a\b^{N}=\pi^{N+1}$. Then for any non-zero integer $m$,
 	{\allowdisplaybreaks\begin{align}\label{zetageneqn3}
 			&\a^{-\left( \frac{2Nm+N-1}{N+1}\right) }\left(\frac{1}{2}\zeta(2Nm+N)+\sum_{n=1}^{\infty}\frac{n^{-2Nm-N}}{\textup{exp}\left((2n)^{N}\a\right)-1}\right)\nonumber\\
 			&=(-1)^m \b^{-\left( \frac{2Nm+N-1}{N+1}\right) }\frac{2^{(N-1)(2m+1-{1\over  N})}}{N}\Bigg(\frac{1}{2}\frac{\zeta\left(2m+2-{1\over N}\right)}{\sin\left(\pi \over 2N \right) }+\sum_{j=-\frac{N-1}{2}}^{\frac{N-1}{2}}e^{\pi ij\over N}\sum_{n=1}^{\infty}\frac{n^{-2m-2+{1\over N}}}{\textup{exp}\left((2n)^{\frac{1}{N}}\b e^{\frac{i\pi j}{N}}\right)-1}\Bigg)\nonumber\\
 			&\quad+(-1)^{m}2^{2Nm+N-1}\sum_{j=0}^{m+1}\frac{(-1)^jB_{2j}B_{2N(m-j+1)}}{(2j)!(2N(m-j+1))!}\a^{\frac{2j}{N+1}}\b^{\frac{2N^2(m-j+1)}{N+1}}.
 	\end{align}}
 \end{theorem}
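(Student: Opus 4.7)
The plan is to derive Theorem \ref{zetagen3} by specializing Theorem \ref{Analytic continuation} to $a = -2Nm - N$ and $y = 2^N\alpha$, and then multiplying both sides by $\alpha^{-(2Nm+N-1)/(N+1)}$. The relation $\alpha\beta^N = \pi^{N+1}$ will be used repeatedly to interconvert $\pi$-powers and $\beta$-powers, and the oddness of $N$ enters through $(-1)^N = -1$. After unfolding $\sigma_a^{(N)}(n) = \sum_{d^N\mid n}d^a$ as $\sum_{n\geq 1}\sigma_a^{(N)}(n)e^{-ny} = \sum_{d\geq 1}d^a/(e^{d^Ny}-1)$, the Lambert part of the LHS of \eqref{Eqn:Analytic continuation} specializes to $\sum_n n^{-2Nm-N}/(e^{(2n)^N\alpha}-1)$, which together with $\zeta(-a)/2 = \zeta(2Nm+N)/2$ reproduces the bracketed LHS of \eqref{zetageneqn3}. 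The hypothesis $\re(a) > -(2m+2)N - 1$ is satisfied by choosing the free parameter of \eqref{Eqn:Analytic continuation} to be $\max(m, 0)$.

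Next, the two elementary correction terms on the LHS of \eqref{Eqn:Analytic continuation} contribute to the RHS of \eqref{zetageneqn3}. The term $\zeta(N-a)/y = \zeta(2N(m+1))/(2^N\alpha)$, by Euler's formula \eqref{ef} (with $(-1)^{N(m+1)+1} = (-1)^m$ since $N$ is odd), becomes the $j = 0$ summand of the Bernoulli-product sum of \eqref{zetageneqn3} after using $\pi^{N+1} = \alpha\beta^N$ to rewrite the $\pi$-power. For the term $N^{-1}\Gamma((1+a)/N)\zeta((1+a)/N)y^{-(1+a)/N}$, I would set $s = (1+a)/N = 1/N - 2m - 1$ and apply the Riemann functional equation in the form $\Gamma(s)\zeta(s) = 2^{s-1}\pi^s\zeta(1-s)/\cos(\pi s/2)$; the evaluation $\cos(\pi s/2) = (-1)^m \sin(\pi/(2N))$ then converts it into a multiple of $\zeta(2m+2-1/N)/\sin(\pi/(2N))$, matching the non-Lambert entry inside the large parenthesis on the first line of the target RHS.

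For the Bessel-type sum, I apply Theorem \ref{meijergsim} in its odd-$N$ form \eqref{aanzodd}, decomposing the Meijer $G$-function inside ${}_{\frac{1}{2}}K^{(N)}_{a/(2N)}(4\pi^{N+1}n/(yN^N), 0)$ into a ${}_1F_{2N}$-piece and the exponential piece $A_{a,N}$. A direct computation gives $2N\tilde z^{1/(2N)} = (2n)^{1/N}\beta$ for the Meijer argument $\tilde z = z^2/4$, and, since $(a+1)/N = -2m - 1 + 1/N$ implies $e^{\pi ik(a+1)/N} = (-1)^k e^{\pi ik/N}$, each $\mathscr{E}_{a,N,\tilde z}(k,0)$ collapses to $e^{\pi ik/N}\exp((2n)^{1/N}\beta e^{\pi ik/N})$. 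Using $e^{-\pi ia} = -1$ and $e^{-2\pi ia} = 1$ (both from $N$ odd), the three subranges in \eqref{aanzodd} merge; the reindexing $k \mapsto k - N$ on the convergent middle subrange $\{(N+1)/2, \ldots, (3N-1)/2\}$ (where $\cos(\pi k/N) < 0$) yields $j \in \{-(N-1)/2, \ldots, (N-1)/2\}$ with exponential $\exp(-(2n)^{1/N}\beta e^{\pi ij/N})$. Once the prefactor $z^{a/(2N)-2/N}$ from \eqref{muknug} absorbs the $n^{-a/(2N)}$ weight of the outer sum and $S_a^{(N)}(n) = \sum_{d_1^Nd_2=n}d_2^{1/N-2m-2}$ is unfolded, the geometric series $\sum_{d_1\geq 1}e^{-d_1x} = 1/(e^x-1)$ in $d_1$ reassembles the dual Lambert series $\sum_{j=-(N-1)/2}^{(N-1)/2}e^{\pi ij/N}\sum_n n^{-2m-2+1/N}/(e^{(2n)^{1/N}\beta e^{\pi ij/N}}-1)$ appearing in \eqref{zetageneqn3}. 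The ``divergent'' subranges of $A_{a,N}$ combine with the ${}_1F_{2N}$-piece to cancel against the $C_{m,N}$-subtraction in \eqref{Eqn:Analytic continuation} term-by-term, via Gauss's $(2N)$-multiplication formula applied to the $\Gamma$-products at parameters $k - m + i/(2N)$ (with poles in $C_{m,N}$ at $a = -2Nm - N$ interpreted as limits cancelling against matching poles on the hypergeometric side).

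Finally, the explicit finite sum $(y/(2\pi^2))\sum_{k=0}^m(-y^2/(4\pi^2))^k \zeta(2N(m-k))\zeta(2k+2)$ (with $\zeta(0) = -1/2$ at $k = m$) is evaluated via Euler's formula \eqref{ef} on both zeta-factors. Each $k$-summand becomes a rational multiple of $B_{2k+2}B_{2N(m-k+1)}/((2k+2)!(2N(m-k+1))!)$ times $\alpha^{\mathrm{power}}\beta^{\mathrm{power}}$; after collecting $\pi$-powers via $\pi^{N+1} = \alpha\beta^N$, this matches the $j = k+1$ summand of the Bernoulli-product sum of \eqref{zetageneqn3}. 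Combined with the $j = 0$ summand from $\zeta(N-a)/y$, this assembles the full sum $\sum_{j=0}^{m+1}$ on the RHS. The main technical challenges are tracking phases when merging the three subranges of \eqref{aanzodd}, and verifying the structural cancellation between the ${}_1F_{2N}$-piece and the $C_{m,N}$-subtraction via the Gauss multiplication formula; the remaining simplifications are routine applications of \eqref{ef}, the functional equation of $\zeta$, and the constraint $\alpha\beta^N = \pi^{N+1}$.
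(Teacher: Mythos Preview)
Your overall strategy is exactly the paper's: specialize Theorem~\ref{Analytic continuation} at $a=-2Nm-N$, invoke Theorem~\ref{meijergsim} to split the Meijer $G$-function into the ${}_1F_{2N}$-piece and $A_{a,N}$, unfold $S_a^{(N)}(n)$ into a double sum, sum the geometric series in $d_1$, and handle the elementary $\zeta$-terms via \eqref{ef} and \eqref{zetafe}. Your treatment of the closed-form pieces ($\zeta(N-a)/y$ as the $j=0$ Bernoulli term, the $\Gamma\zeta$-term via the functional equation, the finite $\zeta\zeta$-sum as the $j=1,\dots,m+1$ Bernoulli terms) matches the paper's, and your care with the free parameter $\max(m,0)$ in \eqref{Eqn:Analytic continuation} is in fact cleaner than the paper's implicit identification.

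The one place your description diverges from what actually happens is the cancellation mechanism among the three pieces $A_{a,N}$, ${}_1F_{2N}$, and $C_{m,N}$. You assert that the middle subrange of $A_{a,N}$ by itself yields the dual Lambert series, while the divergent outer subranges of $A_{a,N}$ together with the ${}_1F_{2N}$-piece cancel against $C_{m,N}$. That is not quite right and would give the wrong constant in front of the Lambert series. What the paper does (and what is forced by the structure) is to first expand the ${}_1F_{2N}$ at $a=-2Nm-N$ via the Prudnikov identity \eqref{prudnikovid}: this produces a \emph{full} exponential sum over $k=0,\dots,2N-1$ \emph{plus} a finite polynomial tail in $Z^{-1}$. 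The exponential part then combines with \emph{all three} subranges of $A_{a,N}$; the outer (divergent) subranges cancel exactly, while the middle subrange is \emph{doubled}, and it is this doubled middle piece that, after reindexing $k\mapsto j+N$, produces the dual Lambert series with the correct coefficient. Separately, the polynomial tail from \eqref{prudnikovid} is what cancels term-by-term against the limit of $\sin(\tfrac{\pi}{2}(N-a))\,C_{m,N}$ (the $0\cdot\infty$ limit you allude to, evaluated in the paper via \eqref{usefullim}). So the divergent exponentials are killed by exponentials, not by $C_{m,N}$; the missing ingredient in your sketch is the explicit use of \eqref{prudnikovid}, without which neither the correct doubling nor the correct polynomial tail is visible.
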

The identity \eqref{Zagier} is equivalent to the special case $m=-1$ of the above theorem for $N$ odd. For $N$ even, it can be derived by letting $m=0$ in \eqref{case-2Nm+N} below. See  \S \ref{zagierjrh}.
\begin{theorem}\label{zagiereqvt}
	Let $N\in\mathbb{N}$ and $b$ be defined in \eqref{defb}. Then
	\begin{align}\label{zagier to us}
		\sum_{n=1}^\infty \sigma_{N}^{(N)}(n) e^{-ny}  + \frac{1}{2}\zeta(-N) &=-\frac{1}{2y}-\frac{1}{2N\sin{\left(\pi\over 2N\right)}}\left(\frac{2\pi}{y}\right)^{1+{1\over N}}\zeta\left(-{1\over N}\right) \nonumber\\
		&\quad-{1\over N}\left(\frac{2\pi}{y}\right)^{1+{1\over N}}\sum_{j=-{(N-1+b)\over 2}}^{{N-1-b\over 2}}\sum_{n=1}^\infty    \frac{{n^{{1\over N}}}{e^{i\pi(2j+b)  \over 2N}}}{\exp  \left(2 \pi  e^{i\pi (2j+b) \over 2N}\left(\frac{2\pi n}{y} \right)^{1\over N}\right) -1}.
	\end{align}
\end{theorem}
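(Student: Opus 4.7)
The cleanest plan is to obtain \eqref{zagier to us} as the logarithmic derivative (in $y$) of Zagier's transformation \eqref{Zagier} for the generalized Dedekind eta function $\eta_N$. I would specialize \eqref{Zagier} at $z=2\pi i/y$, for which $-1/z=iy/(2\pi)\in\HH$ when $\re(y)>0$ and $\sqrt{-iz}=\sqrt{2\pi/y}$. This gives
\[
\eta_N\!\bigl(iy/(2\pi)\bigr) \;=\; (2\pi)^{(N-1)/2}\sqrt{2\pi/y}\,\prod_{w}\eta_{1/N}(w),
\]
where the product runs over the $N$ values $w\in\HH$ with $w^N=\pm 2\pi i/y$. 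A direct enumeration of these roots shows that they can be uniformly parametrized as $w_j=i(2\pi/y)^{1/N}e^{i\pi(2j+b)/(2N)}$, with $j$ ranging over exactly the index set $\{-(N-1+b)/2,\dots,(N-1-b)/2\}$ appearing in \eqref{zagier to us} with $b$ defined by \eqref{defb}; in particular each $w_j$ lies in the open right half of $\HH$ since $\cos(\pi(2j+b)/(2N))>0$ throughout this range.

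Next, I would take $\log$ of both sides and expand using \eqref{Zagier eta}. On the LHS, $\log\eta_N(iy/(2\pi))=\tfrac{y}{2}\zeta(-N)+\sum_{n\ge 1}\log(1-e^{-n^Ny})$. On the RHS, using $2\pi i n^{1/N}w_j=-2\pi e^{i\pi(2j+b)/(2N)}(2\pi n/y)^{1/N}$, each series $\sum_n\log(1-\exp(-2\pi e^{i\pi(2j+b)/(2N)}(2\pi n/y)^{1/N}))$ converges absolutely and uniformly on compacta of $\{\re(y)>0\}$. The sum over $j$ of the linear-in-$w_j$ piece is handled via the closed-form geometric evaluation
\[
\sum_{j=-(N-1+b)/2}^{(N-1-b)/2}e^{i\pi(2j+b)/(2N)} \;=\; \frac{1}{\sin(\pi/(2N))},
\]
which holds uniformly in $N$ odd and $N$ even.

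Finally, I would differentiate in $y$. The LHS derivative equals $\tfrac12\zeta(-N)+\sum_n\sigma_N^{(N)}(n)e^{-ny}$, using \eqref{fkny} and $\tfrac{d}{dy}\log(1-e^{-n^Ny})=n^N/(e^{n^Ny}-1)$. On the RHS, $\tfrac{d}{dy}\!\left[\tfrac12\log(2\pi/y)\right]=-1/(2y)$; the linear $w_j$ contribution, combined with $\pi(2\pi)^{1/N}=(2\pi)^{1+1/N}/2$ and the geometric sum above, produces exactly $-\zeta(-1/N)(2\pi/y)^{1+1/N}/(2N\sin(\pi/(2N)))$; and a direct computation of $\tfrac{d}{dy}\log(1-\exp(-2\pi e^{i\pi(2j+b)/(2N)}(2\pi n/y)^{1/N}))$ yields
\[
-\frac{n^{1/N}e^{i\pi(2j+b)/(2N)}}{N}(2\pi/y)^{1+1/N}\bigl(\exp(2\pi e^{i\pi(2j+b)/(2N)}(2\pi n/y)^{1/N})-1\bigr)^{-1},
\]
whose double sum over $n$ and $j$ is precisely the last line of \eqref{zagier to us}. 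Rearranging gives the claimed identity.

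The main obstacle is purely bookkeeping: unifying the enumeration of the $N$ roots $w_j\in\HH$ for $N$ odd versus $N$ even into a single index range featuring $b$ from \eqref{defb}, and the case-by-case verification of the geometric-sum identity $\sum_j e^{i\pi(2j+b)/(2N)}=\csc(\pi/(2N))$. Termwise differentiation is legitimate by absolute convergence. An equivalent derivation could proceed directly from Theorem \ref{In terms of G} at $a=N$ (or from Theorem \ref{Analytic continuation} at $a=N$, $m=0$, where the $C_{m,N}$-correction vanishes since $\sin((N-a)\pi/2)=0$ and the finite $\zeta$-sum vanishes since $\zeta(-2N)=0$): after using the functional equation of $\zeta$ to convert $\Gamma(1+1/N)\zeta(1+1/N)$ into $\zeta(-1/N)/\sin(\pi/(2N))$ form, the remaining Meijer $G$-series would be reduced via Theorem \ref{meijergsim} in the limit $a\to N$ and then collapsed using the elementary identity $\sum_n S_N^{(N)}(n)e^{-\alpha n^{1/N}}=\sum_n n^{1/N}/(e^{\alpha n^{1/N}}-1)$, which follows by expanding $S_N^{(N)}(n)=\sum_{d_1^Nd_2=n}d_2^{1/N}$ and summing the geometric series in $d_1$.
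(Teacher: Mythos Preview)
Your proposal is correct. It coincides with the second of the two derivations the paper gives in \S\ref{zagierjrh}: the paper also specializes \eqref{Zagier}, takes logarithms, and differentiates in $y$ (they use $z=iy$ and perform a final substitution $y\mapsto 2\pi/y$, whereas you set $z=2\pi i/y$ directly and avoid that step; your uniform parametrization of the $N$ roots via the parity indicator $b$ is a tidy improvement over the paper's separate odd/even treatments). Your check of the geometric sum $\sum_j e^{i\pi(2j+b)/(2N)}=\csc(\pi/(2N))$ and of the $w_j\in\HH$ condition is accurate.

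The paper's \emph{primary} proof, however, is the one you sketch only briefly at the end: for $N$ odd it specializes Theorem~\ref{zetagen3} at $m=-1$, and for $N$ even it specializes \eqref{case-2Nm+N} at $m=0$, then substitutes $y=2^N\alpha$. This route is the natural one from the paper's standpoint, since it exhibits \eqref{zagier to us} as a corollary of the general transformations (Theorems~\ref{In terms of G} and \ref{Analytic continuation}) rather than importing Zagier's result as an independent input. Your approach is more self-contained and elementary but logically reverses the paper's narrative; the paper in fact presents your argument afterwards precisely to demonstrate equivalence with \eqref{Zagier} (cf.\ Remark~\ref{remark}).
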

A plane partition of  an integer $n$ is a two-dimensional array of non-negative integers in the fourth quadrant subject to a non-increasing condition along rows and columns. Let $\mathbb{P}(n)$ denote the number of plane partitions of $n$. Then for $|q|<1$, the generating function of  $\mathbb{P}(n)$ is given by \cite[p.~184, Equation (11.2.15)]{gea}
\begin{equation}\label{pp}
\sum_{n=1}^{\infty}\mathbb{P}(n)q^n=\prod_{n=1}^{\infty}\frac{1}{(1-q^n)^n}.
\end{equation}
As is evident, the above infinite product also generates $\mathscr{P}(n)$, the number of partitions of an integer in which a part $n$ appears with $n$ copies (or with $n$ distinct colors, say). In fact, Chaundy \cite{chaundy} obtained a bijection between the set of plane partitions of an integer and the set of partitions of that integer with ``$n$ copies of $n$''. 

In our work, we encounter the logarithmic derivative of the generalized infinite product
%$\prod_{n=1}^{\infty}\left(1-q^{n^N}\right)^{-n^{2N-1}}$
$\prod\limits_{n=1}^{\infty}\frac{1}{\left(1-q^{n^N}\right)^{n^{2N-1}}}$, where $N\in\mathbb{N}$. Clearly, this product generates the number of power partitions with ``$n^{2N-1}$ copies of $n^{N}$''. Let 
\begin{equation}\label{ppN}
	F_N(q):=\sum_{n=1}^{\infty}\mathscr{P}_{N}(n)q^n=\prod_{n=1}^{\infty}\frac{1}{\left(1-q^{n^N}\right)^{n^{2N-1}}}.
\end{equation}
Observe that for $N>1$, there is a lot of repetition among the coefficients $\mathscr{P}_{N}(n)$ of $F_N(q)$. For example, when $N=3$,  $\mathscr{P}_{N}(n)=1$ for $1\leq n\leq7$; $\mathscr{P}_{N}(n)=33$ for $8\leq n\leq15$; $\mathscr{P}_{N}(n)=561$ for $16\leq n\leq23$, and $\mathscr{P}_{N}(n)=6545$ for $24\leq n\leq26$ and so on. Indeed, this happens because the only parts that can appear in these partitions are $1^3, 2^3, 3^3,\cdots$. However, these parts $1^3, 2^3, 3^3,\cdots$ appear with $1^5, 2^5, 3^5,\cdots$ copies respectively. We evaluate $\mathscr{P}_{3}(24)$, in particular. Note that the only partitions of $24$ that can be enumerated by $\mathscr{P}_3(24)$ are
$\underbrace{1+1+\cdots+1}_{24\hspace{1mm}\text{times}}$,    $8+\underbrace{1+1+\cdots+1}_{16\hspace{1mm}\text{times}}$, $8+8+\underbrace{1+1+\cdots+1}_{8\hspace{1mm}\text{times}}$ and $8+8+8$. But we have $32$ different copies of $8$. Since the order of the copies does not matter, we have
\begin{equation*}
	\mathscr{P}_3(24)=1+\binom{32}{1}+\left\{\binom{32}{2}+\binom{32}{1}\right\}+\left\{\binom{32}{3}+2\binom{32}{2}+\binom{32}{1}\right\}=6545.
\end{equation*}
It is always worthwhile to find the asymptotic behavior of generating functions of restricted partition functions as $q\to1^{-}$ or as $q$ tends to other roots of unity from within the unit disk. For example, Wright \cite[Lemma 1]{wright} found the asymptotic estimate of the generating function of $\mathbb{P}(n)$, occurring in \eqref{pp}, as $q\to1^-$. He derived this result through a long intricate calculation \cite[pp.~180-184]{wright}. A short direct proof of Wright's result was recently obtained in \cite{dk03}. 

Very recently, Bridges, Brindle, Bringmann and Franke \cite{bbbf} studied the asymptotic behavior of partition functions generated by the infinite product $\prod\limits_{n=1}^{\infty}(1-q^n)^{-f(n)}$, where $f(n)$ is an arithmetic function whose Dirichlet series satisfies certain conditions. See Theorems 1.4 and 4.4 of \cite{bbbf}.	 While the plane partition generating function falls under the purview of their setting, the infinite product we consider in \eqref{ppN} does not.

In what follows, we obtain the asymptotic estimate for $F_N(q)$, thereby generalizing Wright's result for odd $N$. To that end, apply the operator $q\frac{d}{dq}$ on the infinite product in \eqref{ppN} to get
\begin{align}\label{gpp3N-1}
q\frac{d}{dq}\log\left(\prod_{n=1}^{\infty}\frac{1}{\left(1-q^{n^N}\right)^{n^{2N-1}}}\right)=-q\frac{d}{dq}\sum_{n=1}^{\infty}n^{2N-1}\log\left(1-q^{n^{N}}\right)=\sum_{n=1}^{\infty}\frac{n^{3N-1}q^{n^N}}{1-q^{n^N}}=	\sum_{n=1}^\infty \sigma_{3N-1}^{(N)}(n) q^n. 
\end{align}
The series on the extreme right-hand side is the special case $m=1$ of the series occurring on the left-hand side of the following transformation, which is a special case of Theorem \ref{In terms of G}. Before we state it though, we need to define the hyperbolic sine and cosine integrals  $\mathrm{Shi}(z)$ and $\mathrm{Chi}(z)$ defined by \cite[p.~150, Equation (6.2.15), (6.2.16)]{NIST}
\begin{align*}
	\mathrm{Shi}(z):=\int_0^z\frac{\sinh(t)}{t}\ dt,\hspace{3mm}
	\mathrm{Chi}(z):=\gamma+\log(z)+\int_0^z\frac{\cosh(t)-1}{t}\ dt.
\end{align*}
For brevity, we use the notation
\begin{align}\label{brevity}
	\left(\sinh\operatorname{Shi}-\cosh\operatorname{Chi}\right)(z):=\sinh(z) \operatorname{Shi} (z)- \cosh(z) \operatorname{Chi} (z)
\end{align}
throughout the sequel.
 	\begin{theorem}\label{thm_resultt}
 	Let $N$ be an odd positive number. For a positive integer $m$,
 	\begin{align}\label{resultt}
 		&\sum_{n=1}^\infty \sigma_{2Nm-1+N}^{(N)}(n) e^{-ny}+ \frac{B_{2Nm}}{2Nmy}	-\frac{(2m)!\z(2m+1)}{Ny^{2m+1}}\nonumber \\
 		& =\frac{2(-1)^m}{\pi}\left({2\pi \over y}\right)^{2m+1}\sum_{n=1}^{\infty} S_{2Nm-1+N}^{(N)}(n)\Bigg[ 
 		\frac{1}{N}\sum_{k=-{\left(N-1 \right)\over 2 }}^{N-1\over 2 }\left( \sinh\operatorname{Shi}-\cosh\operatorname{Chi}\right) \left(  \left({(2\pi)^{N+1}n\over y}\right)^{1\over N}e^{\pi i k \over N}\right)\nonumber\\
 		&\quad\hspace{6.5cm}+\sum_{j=1}^{m} (2Nj-1)!\left({(2\pi)^{N+1}n\over y}\right)^{-2j}\Bigg].
 	\end{align}
 \end{theorem}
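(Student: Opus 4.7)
The plan is to specialize Theorem \ref{In terms of G} to $a = 2Nm-1+N$ and simplify the Meijer $G$-function appearing there via Theorem \ref{meijergsim}, carefully handling a parameter degeneracy that arises at this value of $a$.

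First I evaluate the three explicit terms on the right of \eqref{In terms of G_eqn}. Since $N$ is odd, $N(2m+1)$ is odd, so $-a = 1 - N(2m+1)$ is a negative even integer for $m, N \geq 1$; hence $\zeta(-a) = 0$. Moreover $\zeta(N-a) = \zeta(1-2Nm) = -B_{2Nm}/(2Nm)$, and $(1+a)/N = 2m+1$, so $\Gamma((1+a)/N)\zeta((1+a)/N) = (2m)!\,\zeta(2m+1)$. Transferring these to the left-hand side precisely reproduces the explicit part of \eqref{resultt}, reducing the claim to the statement that the Meijer $G$-term in \eqref{In terms of G_eqn} equals the right-hand side of \eqref{resultt}. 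A direct calculation with $z = 4\pi^{2N+2}n^2/(y^2 N^{2N})$ gives $2Nz^{1/(2N)} = ((2\pi)^{N+1}n/y)^{1/N}$, which is exactly the argument of the $(\sinh\operatorname{Shi}-\cosh\operatorname{Chi})$ terms of \eqref{resultt}.

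Next I apply Theorem \ref{meijergsim}. At $a = 2Nm-1+N$ two singularities appear simultaneously. First, the hypergeometric parameter list $\langle \tfrac{1}{2} - \tfrac{a+1}{2N} + \tfrac{i}{2N}\rangle_{i=1}^{2N}$ specialises to $\langle -m + \tfrac{i}{2N}\rangle_{i=1}^{2N}$, whose $i=2N$ entry equals $1-m$, a non-positive integer when $m\geq 1$; the series ${}_1F_{2N}$ is therefore formally divergent. Second, $a/2 = Nm + (N-1)/2 \in \mathbb{Z}$ because $N$ is odd, so $\sin(\pi a/2) = 0$ and the prefactor $1/\sin(\pi a/2)$ in $A_{a,N}(z)$ (see \eqref{aanzodd}) has a simple pole. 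My plan is to perturb $a \mapsto a + 2\varepsilon$, apply Theorem \ref{meijergsim} for $\varepsilon \neq 0$, and pass to the limit $\varepsilon \to 0$; the two simple poles cancel against each other.

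To extract the finite limit I split the hypergeometric series. For $k = 0, 1, \ldots, m-1$ the terms are regular at $\varepsilon = 0$. Combined with the factor $z^{\tfrac{1}{2} + \tfrac{1-a}{2N}}$ and the Gamma ratio $\Gamma(Nm)/\Gamma(\tfrac{1}{2} - Nm)$ (the latter simplified by Legendre duplication and reflection: $\Gamma(Nm)/\Gamma(\tfrac{1}{2}-Nm) = (-1)^m\,2^{1-2Nm}(2Nm-1)!/\sqrt{\pi}$), and with the outer prefactors of \eqref{In terms of G_eqn}, they collapse after re-indexing $k = m-j$ to the polynomial $\sum_{j=1}^m (2Nj-1)!\bigl((2\pi)^{N+1}n/y\bigr)^{-2j}$ of \eqref{resultt}, the ratio $\Gamma(1-2Nj)/\Gamma(1-2Nm) = (2Nm-1)!/(2Nj-1)!$ (read off via residues at the gamma poles) being responsible for the factor $(2Nj-1)!$. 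The tail $k\geq m$ is of order $\varepsilon^{-1}$; pairing these terms with the $\varepsilon^{-1}$ part of $A_{a,N}(z)$ and using the power series of $\operatorname{Shi}$ and $\operatorname{Chi}$ together with $\operatorname{Ei}(x)=\operatorname{Shi}(x)+\operatorname{Chi}(x)$ and $E_1(x)=\operatorname{Chi}(x)-\operatorname{Shi}(x)$, one identifies the finite remainder. After symmetrising the three sub-sums in \eqref{aanzodd} into a single sum over $k\in\{-(N-1)/2,\ldots,(N-1)/2\}$ via the relabellings $k\to k$, $k\to k-N$, $k\to k-2N$ prescribed by the phase factors $e^{-\pi i a}$ and $e^{-2\pi i a}$, the remainder becomes $\tfrac{1}{N}\sum_k (\sinh\operatorname{Shi}-\cosh\operatorname{Chi})\bigl(((2\pi)^{N+1}n/y)^{1/N}e^{i\pi k/N}\bigr)$.

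The hardest step is this last identification: verifying that the two $\varepsilon^{-1}$ singularities cancel cleanly and that the finite remainder assembles into the claimed $(\sinh\operatorname{Shi}-\cosh\operatorname{Chi})$ combination with the correct overall constant $2(-1)^m/\pi$. It requires tracking the $\gamma$ and $\log$ contributions arising from expanding $1/\Gamma(1-m+\varepsilon+k)$ for $k\geq m$ (which supply the $\operatorname{Chi}$ logarithm) alongside those from the derivative $\tfrac{d}{d\varepsilon}\sin(\pi a/2+\pi\varepsilon)\big|_{\varepsilon=0} = \tfrac{\pi}{2}\cos(\pi a/2) = \tfrac{\pi}{2}(-1)^{Nm+(N-1)/2}$, with the signs accounting for the factor $(-1)^m$ in the final constant.
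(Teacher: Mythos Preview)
Your overall strategy coincides with the paper's: specialize Theorem \ref{In terms of G} at $a=2Nm-1+N$, evaluate the three explicit terms exactly as you do, and then resolve the $0/0$ indeterminacy in the Meijer $G$-piece coming from Theorem \ref{meijergsim} by a perturbation/L'Hopital argument. Your treatment of the polynomial block $k=0,\ldots,m-1$ is also the same as the paper's (the paper calls it $B_1$, re-indexed as $h=m-j$).

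Where your write-up has a genuine gap is the ``hardest step'' you flag at the end. After the perturbation, what actually emerges on the hypergeometric side is the series $\sum_{h\geq 0}\bigl(\psi(2Nh+1)-\log w\bigr)w^{2Nh}/(2Nh)!$ with $w=((2\pi)^{N+1}n/y)^{1/N}$, and the identities $\operatorname{Ei}=\operatorname{Shi}+\operatorname{Chi}$, $E_1=\operatorname{Chi}-\operatorname{Shi}$ alone do not turn this into $\tfrac{1}{N}\sum_{k}(\sinh\operatorname{Shi}-\cosh\operatorname{Chi})(we^{i\pi k/N})$ for $N>1$. The paper handles this with a dedicated lemma, Proposition~\ref{kuchnahi} (equation \eqref{Generalized Dixon-Ferror identity odd}), which is precisely the evaluation of that $\psi-\log$ series; the proof uses orthogonality of $N$-th roots of unity together with \eqref{2equiv1} and produces an \emph{extra} finite sum $\tfrac{\pi}{2N}\sum_{j=1}^{N-1}\frac{(-1)^j}{\sin(\pi j/N)}\sum_h w^{2Nh+2j}/(2Nh+2j)!$. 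In the paper's organization this extra sum is cancelled by the separately computed limit of $T_2^{*}$ (the $j=2,\ldots,N$ block of the intermediate decomposition \eqref{Reduced Meijer G}, see \eqref{limit T2*}). Because you work directly with $A_{a,N}$ rather than with \eqref{Reduced Meijer G}, that same cancellation is hidden inside the $a$-derivative of $A_{a,N}$ (derivatives of the phase factors $e^{\pi i k(a+1)/N}$, $e^{-\pi i a}$, $e^{-2\pi i a}$), and your symmetrisation sentence does not account for it. The cleanest fix is to invoke Proposition~\ref{kuchnahi} explicitly, or equivalently to revert to the decomposition \eqref{Reduced Meijer G} so that the finite piece $T_2^{*}$ is visible and can be matched against the extra sum.
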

Using Theorem \ref{thm_resultt}, we now obtain an intermediate result towards our goal of finding the asymptotic estimate for $F_N(q)$ as $q\to1^{-}$.
\begin{theorem}\label{Asym of Sum Sigma}
	Let $m\in \N$ and let $N$ be a positive odd integer. As $y \to 0$ in $| \mathrm{arg}\, y |<\frac{\pi}{2}$,
	\begin{align*}
		\sum_{n=1}^\infty \sigma_{2Nm-1+N}^{(N)}(n) e^{-ny} &= \frac{(2m)! \zeta(2m+1)}{Ny^{2m+1}} - \frac{B_{2Nm}}{2Nmy} - (-1)^m \frac{4}{(2\pi)^{2Nm}}\nonumber\\
		& \quad\times \sum_{j=1}^{r+1} \frac{\Gamma (2Nm+2Nj)\zeta (2Nm+2Nj) \zeta (2j)}{(2\pi)^{2j(N+1)}}y^{2j-1} + O(y^{2r+3}).
	\end{align*}
\end{theorem}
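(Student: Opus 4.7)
The plan is to insert the asymptotic expansion of $(\sinh\operatorname{Shi}-\cosh\operatorname{Chi})(z)$ at infinity into the exact transformation of Theorem \ref{thm_resultt} and let it cancel against the polynomial correction already present in that transformation. Using the well-known identities $\operatorname{Shi}(z) = \tfrac{1}{2}(\mathrm{Ei}(z)+E_{1}(z))$ and $\operatorname{Chi}(z) = \tfrac{1}{2}(\mathrm{Ei}(z)-E_{1}(z))$, one gets $(\sinh\operatorname{Shi}-\cosh\operatorname{Chi})(z) = \tfrac12[e^{z}E_{1}(z) - e^{-z}\mathrm{Ei}(z)]$. The classical asymptotic expansions $e^{z}E_{1}(z) \sim \sum_{k\ge 0}(-1)^{k}k!/z^{k+1}$ and $e^{-z}\mathrm{Ei}(z) \sim \sum_{k\ge 0}k!/z^{k+1}$, valid uniformly as $|z|\to\infty$ in any closed subsector of $|\arg z|<\pi/2$, collapse upon subtraction (the even-$k$ terms cancel) to give
\[
(\sinh\operatorname{Shi}-\cosh\operatorname{Chi})(z) \;=\; -\sum_{j=0}^{L}\frac{(2j+1)!}{z^{2j+2}} \;+\; O\bigl(|z|^{-2L-3}\bigr), \qquad L\in\N\cup\{0\}.
\]

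Next I would substitute this into the bracketed kernel of Theorem \ref{thm_resultt} with $z$ replaced by $z_{k}:=\bigl((2\pi)^{N+1}n/y\bigr)^{1/N}e^{i\pi k/N}$ for $k=-(N-1)/2,\ldots,(N-1)/2$. Since $N$ is odd and $|\arg y|<\pi/2$, each $z_{k}$ lies in a sector $|\arg z_{k}|\le \pi/2-\delta'$, so the expansion holds uniformly in $k$ and $n$. The factor $z_{k}^{-(2j+2)}$ carries a phase $e^{-2\pi ik(j+1)/N}$, and since $k$ ranges over a complete residue system modulo the odd integer $N$, $\sum_{k}e^{-2\pi ik(j+1)/N}$ equals $N$ when $N\mid(j+1)$ and $0$ otherwise. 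Writing $j+1=Nj'$ and setting $w=(2\pi)^{N+1}n/y$, the $k$-averaged expansion becomes $-\sum_{j'\ge 1}(2Nj'-1)!\,w^{-2j'}$ plus a remainder. The polynomial correction $\sum_{j=1}^{m}(2Nj-1)!w^{-2j}$ inside the bracket of Theorem \ref{thm_resultt} then cancels exactly the $j'=1,\ldots,m$ terms, leaving contributions only from $j'\ge m+1$.

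Choosing the cutoff $L=N(m+r+2)-1$, the truncated expansion retains $j'=m+1,\ldots,m+r+2$ with uniform remainder $O\bigl(w^{-2(m+r+2)-1/N}\bigr)$. I would then multiply by the prefactor $\tfrac{2(-1)^{m}}{\pi}(2\pi/y)^{2m+1}$ and sum against $S_{2Nm-1+N}^{(N)}(n)$, using the Dirichlet series factorization
\[
\sum_{n=1}^{\infty}\frac{S_{2Nm-1+N}^{(N)}(n)}{n^{s}} \;=\; \zeta(Ns)\,\zeta(s-2m),
\]
which follows at once from \eqref{defbf} with $(1+a)/N=2m+1$ and is valid for $\re(s)$ large. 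Evaluating the resulting Dirichlet series at $s=2j'$, re-indexing by $j=j'-m$, and collecting powers of $2\pi$ and $y$ reproduces exactly the target main terms $-(-1)^{m}\tfrac{4}{(2\pi)^{2Nm}}\sum_{j=1}^{r+1}\tfrac{\Gamma(2N(m+j))\zeta(2N(m+j))\zeta(2j)}{(2\pi)^{2j(N+1)}}y^{2j-1}$. The spurious $j=r+2$ summand is exactly of order $y^{2r+3}$ and therefore absorbs into the stated remainder, while the truncation remainder produces, via $\sum_{n}S_{2Nm-1+N}^{(N)}(n)n^{-2(m+r+2)-1/N}=\zeta(2N(m+r+2)+1)\zeta(2(r+2)+1/N)$ (both finite for $r\ge 0$), a contribution $O(y^{2r+3+1/N})=O(y^{2r+3})$. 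Rearranging Theorem \ref{thm_resultt} yields the claim.

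The principal technical obstacle is the careful bookkeeping of error terms across the two interchanges of summation (first the finite $k$-sum with the asymptotic of $(\sinh\operatorname{Shi}-\cosh\operatorname{Chi})$, then the $n$-summation against $S_{2Nm-1+N}^{(N)}(n)$), ensuring that the implicit constants in the pointwise asymptotic are uniform in $k$ and $n$ and that the sector condition $|\arg z_{k}|<\pi/2-\delta'$ is maintained uniformly in $k$ whenever $|\arg y|<\pi/2-\epsilon$; the cancellation engineered at the level of the $k$-sum is essential since, without it, the truncation remainder would be only $O(y^{2r+1+1/N})$, insufficient for the claim.
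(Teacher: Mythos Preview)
Your proposal is correct and follows essentially the same route as the paper's proof: expand $(\sinh\operatorname{Shi}-\cosh\operatorname{Chi})(z)$ asymptotically, average over the $N$ rotations via orthogonality to retain only the terms indexed by multiples of $N$, cancel these against the polynomial correction already present in Theorem~\ref{thm_resultt}, and then sum over $n$ using the factorization $\sum_{n}S_{2Nm-1+N}^{(N)}(n)n^{-s}=\zeta(Ns)\zeta(s-2m)$. The only notable difference is the source of the asymptotic: the paper imports it from \cite[Theorem~1.10 and Lemma~2.1]{dk03}, where the combination $(\sinh\operatorname{Shi}-\cosh\operatorname{Chi})(we^{\pi ik/N})+\sum_{j=1}^{Nm}(2j-1)!(we^{\pi ik/N})^{-2j}$ is packaged as $(-1)^{Nm}R_{Nm}(1,we^{\pi ik/N})$ with a ready-made asymptotic, whereas you derive the same expansion directly from the classical $E_{1}/\mathrm{Ei}$ asymptotics. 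Your derivation is slightly more self-contained; the paper's is shorter by citation. Your sector analysis (ensuring $|\arg z_{k}|\le \pi/2-\delta'$ uniformly in $k$ when $|\arg y|\le \pi/2-\epsilon$) is in fact more explicit than the paper's, which applies the cited asymptotic nominally for real $w>0$.
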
	
Let $q=e^{-y}$, where $\re(y)>0$. This implies 
\begin{corollary}\label{asym F_N(q)}
	Let $N$ be an odd positive integer. As $q \to 1^{-}$,
	\begin{align*}
		F_N(q) = e^c (\log q)^{\frac{B_{2N}}{2N}}\exp\left(\frac{\zeta(3)}{N\log^2 q}\right) \exp\left( - \sum_{j=1}^{r+1} \delta_j (\log q)^{2j}\right)\left(1+ O_r\left((\log q)^{2r+4} \right)\right),
	\end{align*}
	where $c$ is a constant, $B_{2N}$ are Bernoulli numbers, and 
	\begin{equation*}
		\delta_j := \frac{2 \, \Gamma(2N+2Nj) \, \zeta(2N+2Nj) \, \zeta(2j)}{(2\pi)^{2N} j \left( (2\pi)^{N+1} \right)^{2j}}.
	\end{equation*}
\end{corollary}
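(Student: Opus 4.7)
\medskip
\noindent\textbf{Proof proposal for Corollary \ref{asym F_N(q)}.}

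The plan is to recover the asymptotic behavior of $F_N(q)$ by integrating the asymptotic expansion of its logarithmic derivative, which is exactly what Theorem \ref{Asym of Sum Sigma} provides in the special case $m=1$. First, I would set $q=e^{-y}$ with $\mathrm{Re}(y)>0$ and recall \eqref{gpp3N-1}, which gives
\begin{equation*}
-\frac{d}{dy}\log F_N(e^{-y})=q\frac{d}{dq}\log F_N(q)=\sum_{n=1}^{\infty}\sigma_{3N-1}^{(N)}(n)e^{-ny}.
\end{equation*}
This is precisely the series appearing on the left-hand side of Theorem \ref{Asym of Sum Sigma} with $m=1$ (so that $a=2Nm-1+N=3N-1$).

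Next, I would invoke Theorem \ref{Asym of Sum Sigma} with $m=1$ to obtain, as $y\to 0$ in $|\arg y|<\pi/2$,
\begin{align*}
-\frac{d}{dy}\log F_N(e^{-y})
&= \frac{2\,\zeta(3)}{N\,y^{3}} - \frac{B_{2N}}{2N\,y}
+ \frac{4}{(2\pi)^{2N}}\sum_{j=1}^{r+1}\frac{\Gamma(2N+2Nj)\,\zeta(2N+2Nj)\,\zeta(2j)}{(2\pi)^{2j(N+1)}}\,y^{2j-1}
+ O(y^{2r+3}).
\end{align*}
Then I would integrate both sides term by term with respect to $y$. Because the asymptotic expansion of the smooth function $-\frac{d}{dy}\log F_N(e^{-y})$ holds uniformly in a sector, integrating the error term produces an error of order $O(y^{2r+4})$, and the power series and $1/y^2,\ \log y$ terms integrate in the obvious way. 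This yields
\begin{align*}
\log F_N(e^{-y})
&= \frac{\zeta(3)}{N\,y^{2}} + \frac{B_{2N}}{2N}\log y + c
- \sum_{j=1}^{r+1}\frac{2\,\Gamma(2N+2Nj)\,\zeta(2N+2Nj)\,\zeta(2j)}{(2\pi)^{2N}\,j\,\bigl((2\pi)^{N+1}\bigr)^{2j}}\,y^{2j}
+ O(y^{2r+4}),
\end{align*}
where $c$ is a (universal, $r$-independent) constant of integration; the existence of $c$ follows from the fact that the difference between $\log F_N(e^{-y})$ and the explicit piece converges as $y\to 0$ (or, equivalently, since the expansions for different $r$ must be compatible).

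Finally, recognizing that the coefficient of $y^{2j}$ inside the sum is exactly $\delta_j$ from the statement of the corollary, substituting $y=-\log q$ (so $y^{2j}=(\log q)^{2j}$ and $y^{-2}=(\log q)^{-2}$), and exponentiating, I get
\begin{equation*}
F_N(q) = e^{c}\,(\log q)^{B_{2N}/(2N)}\,\exp\!\left(\frac{\zeta(3)}{N\log^{2}q}\right)\,\exp\!\left(-\sum_{j=1}^{r+1}\delta_j(\log q)^{2j}\right)\bigl(1+O_r((\log q)^{2r+4})\bigr),
\end{equation*}
where the factor $(\log q)^{B_{2N}/(2N)}$ is understood via $y^{B_{2N}/(2N)}$ with $y=-\log q>0$. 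The only genuinely delicate point is the justification of term-by-term integration of the asymptotic expansion and the identification of the integration constant $c$ as a well-defined quantity independent of $r$; both are standard but must be stated cleanly. The heart of the argument is simply that Theorem \ref{Asym of Sum Sigma} already does all the analytic work, and the corollary is its antiderivative dressed up in the variable $q$.
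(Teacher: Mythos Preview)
Your proposal is correct and follows essentially the same route as the paper: apply Theorem \ref{Asym of Sum Sigma} with $m=1$ to the logarithmic derivative computed in \eqref{gpp3N-1}, integrate termwise, and exponentiate. The only cosmetic difference is that you work in the variable $y=-\log q$ and substitute at the end, whereas the paper sets $y=\log(1/q)$ at the outset and integrates directly in $q$ after dividing by $q$; the two computations are identical, and your remark about interpreting $(\log q)^{B_{2N}/(2N)}$ via $y^{B_{2N}/(2N)}$ with $y>0$ is a useful clarification the paper leaves implicit.
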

Letting $N=1$ in the above result, we recover Wright's asymptotic estimate although without the explicit determination of $c$. Wright showed that $c=\displaystyle2\int_{0}^{\infty}\frac{y\log(y)}{e^{2\pi y}-1}\, dy$. 

We have also obtained the special case $a=-2Nm$ of Theorem \ref{Analytic continuation} when $N$ is an even positive integer. This generalizes a formula of Wigert \cite[p.~8-9, Equation (1.5)]{wig}, and was first obtained in \cite[Theorem 1.5]{dixitmaji1}. See Corollary \ref{neven}. Another new result for even $N$ which can be obtained from Theorem \ref{Analytic continuation} is when $a=-2Nm+N$, and is given in \eqref{case-2Nm+N}.  As shown in Section \ref{8.2}, it can also be derived by differentiating a result from \cite[Theorem 2.12]{DGKM}. 

Thus one of the goals of this paper is to not only give new corollaries resulting from Theorems \ref{In terms of G} and \ref{Analytic continuation} but also to provide uniform proofs of several disparate results in the literature.

	\section{Preliminaries}\label{prelim}
	The duplication and reflection formulas for the Gamma function are given by \cite[Chapter 3]{temme}
	\begin{align}
		\G(s)\G\left(s+\frac{1}{2}\right)&=\frac{\sqrt{\pi}}{2^{2s-1}}\G(2s),\label{dupl}\\
		\G(s)\G(1-s)&=\frac{\pi}{\sin(\pi s)}\hspace{5mm}(s\notin\mathbb{Z}).\label{refl}    
	\end{align}
	The Gauss multiplication formula \cite[p.~52]{temme} is given for $m\in\mathbb{N}, m>1$, by
	\begin{equation}\label{gmf}
		\prod_{k=1}^{m}\Gamma\left(z+\frac{k-1}{m}\right)=(2\pi)^{\frac{1}{2}(m-1)}m^{\frac{1}{2}-mz}\Gamma(mz).
	\end{equation}
	The following limit, which can be established using the \eqref{dupl} and \eqref{refl}, will be used frequently in our analysis.
	\begin{equation}\label{usefullim}
		\lim_{a\to -(2\ell+1)} \Gamma(a)\cos\left(\frac{\pi a}{2}\right)= \frac{(-1)^{\ell+1}\pi}{2(2\ell+1)!}\hspace{12mm}(\ell\geq0).
	\end{equation}
The asymmetric form of the functional equation of $\zeta(s)$ reads \cite[p.~73, Equation (4)]{dav}
\begin{equation}\label{zetafe}
	\zeta(1-s)=2^{1-s}\pi^{-s}\Gamma(s)\zeta(s)\cos\left(\frac{\pi s}{2}\right).
\end{equation}
	We need \cite[Formula 7.18.1.1, p.~615]{Prudnikov}
	\begin{align}\label{prudnikovid}
		{}_1F_q&\left(1;{m+1\over q},{m+2\over q},\cdots {m+q\over q};z\right)={m!\over q^{m+1}z^{m/q}} \left[\sum_{k=0}^{q-1}\frac{e^{q\,\theta_k z^{1/q}}}{\theta_k^m}-q^{m+1}\sum_{k=1}^{[m/q]}\frac{z^{m/q-k}}{q^{qk}(m-qk)!} \right],
	\end{align}
	where $\theta_k=e^{2k\pi i/q}$. Here, for $q\in\mathbb{N}$, the ${}_1F_{q}$-hypergeometric function is defined by
	\begin{equation}\label{1fqq} 
	{}_1F_q \left( a; b_1, b_2, \cdots, b_q \big\vert z \right) := \sum_{n=0}^\infty \frac{
		(a)_n}{(b_1)_n (b_2)_n\cdots(b_q)_n }\frac{z^n}{n!}
	\end{equation}
	with $(a)_n:=\Gamma(a+n)/\Gamma(a)$ being the shifted factorial. 
	
	\section{New results on Meijer $G$-function and ${}_{\mu}K_{\nu}^{(N)}(z, w)$}
	The following lemma \cite[Lemma 4.3]{DGKM} will be used in the proof of Theorem \ref{meijergsim}.
	\begin{lemma}\label{cheby}
		Let $z\in\mathbb{C}$ and $N\in\mathbb{N}$. Then
		\begin{align*}
			\frac{\sin( N z)}{\sin (z)} = \sideset{}{''}\sum_{j=-(N-1)}^{N-1} \exp( i j z),
		\end{align*}
	where, here and in the sequel, the notation $\sideset{}{''}\sum\limits_{j=-(N-1)}^{N-1}$ indicates the summation over the values $j = -(N-1), -(N-3), \ldots, N-3, N-1$.Thus,
		\begin{align}
			\frac{ \cos(N z)}{\cos( z )}&=(-1)^{\frac{N-1}{2}}\sideset{}{''}\sum_{j=-(N-1)}^{N-1} i^j\exp( -i j z)\hspace{5mm}\textup{(for $N$ odd)},\label{cc}\\
			\frac{ \sin(N z)}{\cos( z )}&=(-1)^{\frac{N}{2}}\sideset{}{''}\sum_{j=-(N-1)}^{N-1} i^j\exp( i j z)\hspace{5mm}\textup{(for $N$ even)}\nonumber.
		\end{align}
	\end{lemma}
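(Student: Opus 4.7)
\medskip

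\noindent
My plan is to prove the first identity directly from the geometric series and then obtain the other two by the substitution $z \mapsto z + \pi/2$.

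For the first identity, I would write both sine functions in exponential form: $\sin(Nz) = (e^{iNz} - e^{-iNz})/(2i)$ and $\sin(z) = (e^{iz} - e^{-iz})/(2i)$. Factoring $e^{iNz}$ from the numerator and $e^{iz}$ from the denominator gives
\begin{align*}
\frac{\sin(Nz)}{\sin(z)} = e^{i(N-1)z}\,\frac{1-e^{-2iNz}}{1-e^{-2iz}} = e^{i(N-1)z}\sum_{k=0}^{N-1} e^{-2ikz} = \sum_{k=0}^{N-1} e^{i(N-1-2k)z}.
\end{align*}
The change of index $j = N-1-2k$ sends $\{0,1,\dots,N-1\}$ bijectively onto the arithmetic progression $\{-(N-1), -(N-3),\dots,N-3, N-1\}$, which is exactly the index set of $\sideset{}{''}\sum_{j=-(N-1)}^{N-1}$, yielding the claimed identity.

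For the remaining two identities, I would replace $z$ by $z+\pi/2$ in the first identity. On the right-hand side the exponentials pick up a factor $e^{ij\pi/2} = i^j$, so the sum becomes $\sideset{}{''}\sum_{j=-(N-1)}^{N-1} i^j e^{ijz}$. On the left-hand side $\sin(z+\pi/2) = \cos(z)$, while
$\sin(Nz + N\pi/2) = \sin(Nz)\cos(N\pi/2) + \cos(Nz)\sin(N\pi/2).$
When $N$ is odd, $\cos(N\pi/2)=0$ and $\sin(N\pi/2)=(-1)^{(N-1)/2}$, so the left-hand side becomes $(-1)^{(N-1)/2}\cos(Nz)/\cos(z)$; rearranging and using the fact that for odd $N$ all indices $j$ are even (so $j\mapsto -j$ preserves $i^j$), one can rewrite the sum as $\sideset{}{''}\sum i^j e^{-ijz}$, producing \eqref{cc}. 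When $N$ is even, $\sin(N\pi/2)=0$ and $\cos(N\pi/2)=(-1)^{N/2}$, so the left-hand side becomes $(-1)^{N/2}\sin(Nz)/\cos(z)$, which gives the final identity directly.

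No step here is a serious obstacle; the only mild subtlety is the symmetrisation argument in the odd-$N$ case that converts $e^{ijz}$ to $e^{-ijz}$ under the sum, which relies on the parity of the index set. I would flag this briefly but not belabour it. Degenerate values of $z$ where $\sin(z)$ or $\cos(z)$ vanish are handled by analytic continuation, since both sides are entire functions of $z$.
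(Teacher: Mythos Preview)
Your proof is correct: the geometric-series derivation of the first identity is standard, and the substitution $z\mapsto z+\pi/2$ cleanly yields the other two, with the symmetrisation $j\mapsto -j$ in the odd-$N$ case justified since all indices are even and hence $i^j=i^{-j}$. The paper does not prove this lemma but cites it from \cite{DGKM}, so there is no in-paper proof to compare against; your argument is a perfectly acceptable self-contained verification.
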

	
We are now ready to prove the reduction formula for the Meijer $G$-function occurring in Theorem \ref{In terms of G}. 

\begin{proof}[Theorem \textup{\ref{meijergsim}}][]
		Using Slater's theorem \cite[Equation 7 p. 145]{Luke},  we first rewrite the Meijer G-function in the form
		\begin{align}\label{Slater on G}
			& G_{1, \, \, 2N+1}^{N+1, \, \, 1} \left(\begin{matrix}
				\frac{1}{2} + \frac{1-a}{2N}\\
				\frac{1}{2} + \frac{1-a}{2N}, \langle \frac{i}{N}\rangle; \langle 1+\frac{3}{2N}-\frac{i}{N} \rangle
			\end{matrix} \Bigg | \, z \right) \nonumber \\
			&=\frac{\prod\limits_{i=1 }^{N}\Gamma\left( -{1 \over 2}+{{2i+a-1} \over {2N}}\right)z^{\frac{1}{2}+\frac{1-a}{2N}} }{\prod\limits_{i=1}^{N}\Gamma \left({1 \over 2}+{{2i-a-2} \over {2N}} \right)}    
			{}_1F_{2N}\left( \left. \begin{matrix}
				1 \\ \left\langle{3 \over 2 }+{{1-a-2i} \over {2N}} \right\rangle_{i=1}^{N}, \left\langle{1 \over 2}+{{2i-a-2} \over {2N}}\right\rangle_{i=1}^{ N}
			\end{matrix}\right| (-1)^{N+1} z \right)\nonumber\\
			&+\sum_{j=1}^{N}\frac{\prod\limits_{i=1 \atop{i\neq j}}^{N}\Gamma\left({i-j \over N}\right) \Gamma\left({1 \over 2}+{{1-a} \over {2N}}-{j \over N} \right)\Gamma\left({1 \over 2}+{j \over N}- {{1-a} \over {2N}} \right) z^{j/N} }{ \prod\limits_{i=1}^{N}\Gamma \left({j+i \over N}-{3 \over 2N} \right) }  
			{}_1F_{2N}\left( \left. \begin{matrix}
				1  \\ \left\langle{1 }-{i-j \over N}\right\rangle_{i=1}^{N},\left\langle{i+j \over N}-{3 \over 2N}\right\rangle_{i=1}^{ N}
			\end{matrix}\right| (-1)^{N+1} z \right)
		\end{align}
	Observe that the ${}_1F_{2N}$ occurring in the finite sum above is actually a ${}0F_{2N-1}$. We, however, write it in that form so as to be able to later use \eqref{prudnikovid}.
	
		The Gauss multiplication formula \eqref{gmf} simplifies the Gamma factors in the first expression of \eqref{Slater on G} as
		\begin{align}\label{Gamma factors in T1}
			&\prod\limits_{i=1 }^{N}\Gamma\left( -{1 \over 2}+{{2i+a-1} \over {2N}}\right) = (2\pi)^{\frac{N-1}{2}}N^{\frac{N-a}{2}}\Gamma\left(\frac{1-N+a}{2} \right),\nonumber\\
			&\prod\limits_{i=1}^{N}\Gamma \left({1 \over 2}+{{2i-a-2} \over {2N}} \right) = (2\pi)^{\frac{N-1}{2}}N^{\frac{1-N+a}{2}}\Gamma\left(\frac{N-a}{2} \right)
		\end{align}
		Similarly, the Gamma factors in the second expression of \eqref{Slater on G} can be simplified as
		\begin{align}\label{Gamma factors in T2}
			&\prod\limits_{i=1 \atop{i\neq j}}^{N}\Gamma\left({i-j \over N}\right) 
			= \prod\limits_{i=1 \atop{i\neq j}}^{N}\frac{\Gamma\left({i-j \over N} +1 \right) }{ {i-j \over N}}
			=\frac{(2\pi)^{\frac{N-1}{2}}N^{-\frac{1}{2}-N+j}\Gamma(N+1-j)}{\prod\limits_{i=1}^{j-1} {i-j \over N} \prod\limits_{i=j+1}^{N}{i-j \over N}}
			= \frac{(-1)^{j-1}(2\pi)^{\frac{N-1}{2}}N^{j-\frac{3}{2}}}{\Gamma(j)}, 
			\nonumber\\
			&\Gamma\left({1 \over 2}+{{1-a} \over {2N}}-{j \over N} \right)\Gamma\left({1 \over 2}+{j \over N}- {{1-a} \over {2N}} \right) = \frac{\pi}{\cos \left(\pi \left(\frac{j}{N}+ \frac{a-1}{2N} \right)\right)},
			\nonumber\\
			&\prod\limits_{i=1}^{N}\Gamma \left({j+i \over N}-{3 \over 2N} \right) = (2\pi)^{\frac{N-1}{2}} N^{1-j} \Gamma \left(j-\frac{1}{2}\right),
		\end{align}
		where we used the functional equation $\Gamma(z+1)=z\Gamma(z)$ of the Gamma function and \eqref{gmf} for the first one, the reflection formula \eqref{refl} for the second one and again \eqref{gmf} for the final one. Inserting the Gamma factors from \eqref{Gamma factors in T1} and \eqref{Gamma factors in T2} into \eqref{Slater on G}, we obtain
		\begin{align*}
			& G_{1, \, \, 2N+1}^{N+1, \, \, 1} \left(\begin{matrix}
				\frac{1}{2} + \frac{1-a}{2N}\\
				\frac{1}{2} + \frac{1-a}{2N}, \langle \frac{i}{N}\rangle; \langle 1+\frac{3}{2N}-\frac{i}{N} \rangle
			\end{matrix} \Bigg | \, z \right) 
			\nonumber \\
			&= N^{N-a - \frac{1}{2}} \frac{\Gamma\left(\frac{1-N+a}{2}\right)z^{\frac{1}{2}+\frac{1-a}{2N}} }{\Gamma\left(\frac{N-a}{2}\right)} 
			{}_1F_{2N}\left( \left. \begin{matrix}
				1 \\ \left\langle{3 \over 2 }+{{1-a-2i} \over {2N}} \right\rangle_{i=1}^{N}, \left\langle{1 \over 2}+{{2i-a-2} \over {2N}}\right\rangle_{i=1}^{ N}
			\end{matrix}\right| (-1)^{N+1} z \right)
			\nonumber \\
			&+\sum_{j=1}^N \frac{(-1)^{j-1} \pi N^{2j-\frac{5}{2}} z^{j/N}}{\Gamma(j)\Gamma(j-\frac{1}{2})\cos \pi \left(\frac{j}{N}+ \frac{a-1}{2N} \right)} 
			{}_1F_{2N}\left( \left. \begin{matrix}
				1  \\ \left\langle{1 }-{i-j \over N}\right\rangle_{i=1}^{N},\left\langle{i+j \over N}-{3 \over 2N}\right\rangle_{i=1}^{ N}
			\end{matrix}\right| (-1)^{N+1} z \right).
		\end{align*}
		Merging the the sequences inside the hypergeometric functions and applying the duplication formula \eqref{dupl} in the last expression, the above equation reduces to
		\begin{align}\label{Reduced Meijer G}
			& G_{1, \, \, 2N+1}^{N+1, \, \, 1} \left(\begin{matrix}
				\frac{1}{2} + \frac{1-a}{2N}\\
				\frac{1}{2} + \frac{1-a}{2N}, \langle \frac{i}{N}\rangle; \langle 1+\frac{3}{2N}-\frac{i}{N} \rangle
			\end{matrix} \Bigg | \, z \right) 
			\nonumber \\
			&= \frac{N^{N-a - \frac{1}{2}} \Gamma\left(\frac{1-N+a}{2}\right)z^{\frac{1}{2}+\frac{1-a}{2N}} }{\Gamma\left(\frac{N-a}{2}\right)} \,
			{}_1F_{2N}\left( \left. \begin{matrix}
				1 \\ \left\langle \frac{1}{2} - \frac{a+1}{2N} + \frac{i}{2N} \right\rangle_{i=1}^{2N}
			\end{matrix}\right| (-1)^{N+1} z \right)
			\nonumber \\
			&+\sum_{j=1}^N \frac{(-1)^{j-1} (2N)^{2j-2} (\pi/N)^{1/2} z^{j/N} }{\Gamma(2j-1)\cos \pi \left(\frac{j}{N}+ \frac{a-1}{2N} \right)} \,
			{}_1F_{2N}\left( \left. \begin{matrix}
				1  \\ \left\langle \frac{j-1}{N}+\frac{i}{2N} \right\rangle_{i=1}^{2N}
			\end{matrix}\right| (-1)^{N+1} z \right).
		\end{align}
		Let $T_1$ denote the finite sum over $j$ in the above equation. We evaluate it by considering two cases depending on the parity of $N$. 
		
		\noindent
		\textbf{Case 1 :} We first consider the case when $N$ is odd. Invoking \eqref{prudnikovid} for the hypergeometric function involved in the term $T_1$, we obtain
		\begin{align*}
			{}_1F_{2N}\left( \left. \begin{matrix}
				1  \\ \left\langle \frac{j-1}{N}+\frac{i}{2N} \right\rangle_{i=1}^{2N}
			\end{matrix}\right|  z \right) = \frac{(2j-2)!}{(2N)^{2j-1}z^{\frac{j-1}{N}}} \sum_{k=0}^{2N-1} \frac{\exp\left(2N e^{\frac{\pi i k}{N}} z^{\frac{1}{2N}} \right)}{\exp\left( \frac{2\pi i k (j-1)}{N} \right)}
		\end{align*}
		so that
		\begin{align}\label{T2 sum}
			T_1&=  \frac{\sqrt{\pi}z^{1/N}}{2N^{3/2}} \sum_{j=1}^N \frac{(-1)^{j-1}}{\cos \pi \left(\frac{j}{N}+ \frac{a-1}{2N} \right)}
			\sum_{k=0}^{2N-1} \frac{\exp\left(2N e^{\frac{\pi i k}{N}} z^{\frac{1}{2N}} \right)}{\exp\left( \frac{2\pi i k (j-1)}{N} \right)} 
			\nonumber\\
			&= \frac{\sqrt{\pi}z^{1/N}}{2N^{3/2}} \sum_{k=0}^{2N-1} \exp\left(2N e^{\frac{\pi i k}{N}} z^{\frac{1}{2N}} \right) 
			\sum_{j=1}^N \frac{(-1)^{j} e^{\frac{2\pi i k j}{N}}}{\cos \left(\pi \left( \frac{a+1}{2N} - \frac{j}{N} \right)\right) },
		\end{align}
		where in the last step, after interchanging the order of summation, we substituted $j$ by $N+1-j$. The next task is to evaluate the sum
		\begin{align*}
			S:= \sum_{j=1}^N \frac{(-1)^{j} e^{\frac{2\pi i k j}{N}}}{\cos \pi \left(\left( \frac{a+1}{2N} - \frac{j}{N} \right)\right) }.
		\end{align*}
		Utilizing the fact that $\cos \left(\pi \left( \frac{a+1}{2} - j \right) \right)= (-1)^{j+1} \sin\left( \frac{\pi a}{2} \right)$, it is seen that
		\begin{align*}
			S =-\frac{1}{\sin\left( \frac{\pi a}{2} \right)} \sum_{j=1}^N e^{\frac{2\pi i k j}{N}} \frac{\cos \left( N\pi \left( \frac{a+1}{2N} - \frac{j}{N} \right) \right)}{\cos \left(\pi \left( \frac{a+1}{2N} - \frac{j}{N} \right)\right) }
		\end{align*}
		We next apply the result Lemma \ref{cheby} for odd $N$ to write the above sum as 
		\begin{align*}
			S =\frac{(-1)^{\frac{N+1}{2}}}{\sin\left( \frac{\pi a}{2} \right)} \sum_{j=1}^N e^{\frac{2\pi i k j}{N}} \sideset{}{''}\sum_{\ell = -(N-1)}^{N-1} i^{\ell} e^{-i\ell \pi \left( \frac{a+1}{2N}-\frac{j}{N} \right)}.
		\end{align*}
		Substituting $\ell$ by $2\ell$ and interchanging the order of summation, we obtain
		\begin{align*}
			%S &=\frac{(-1)^{\frac{N+1}{2}}}{\sin\left( \frac{\pi a}{2} \right)} \sum_{j=1}^N e^{\frac{2\pi i k j}{N}} \sum_{\ell = -\frac{N-1}{2}}^{\frac{N-1}{2}} (-1)^{\ell} e^{-2 i \ell \pi \left( \frac{a+1}{2N}-\frac{j}{N} \right)}
		%	\nonumber\\
			S&= \frac{(-1)^{\frac{N+1}{2}}}{\sin\left( \frac{\pi a}{2} \right)} \sum_{\ell = -\frac{N-1}{2}}^{\frac{N-1}{2}} (-1)^{\ell} e^{-\frac{\pi i \ell (a+1)}{N}}\sum_{j=1}^{N} e^{\frac{2\pi i j(k+\ell)}{N}}
			\nonumber\\
			&= \frac{(-1)^{k+\frac{N+1}{2}}}{\sin\left( \frac{\pi a}{2} \right)}e^{\frac{\pi i k(a+1)}{N}} \sum_{\ell = k-\frac{N-1}{2}}^{k+\frac{N-1}{2}} (-1)^{\ell} e^{-\frac{\pi i \ell (a+1)}{N}}\sum_{j=1}^{N} e^{\frac{2\pi i j \ell}{N}},
		\end{align*}
		where in the last step we replaced $\ell$ by $\ell-k$. Applying \cite[Theorem 8.1, p. 158]{Apostol} in the last sum over $j$, we have
		\begin{align}\label{Evaluation of S}
			S = \frac{(-1)^{k+\frac{N+1}{2}}N}{\sin\left( \frac{\pi a}{2} \right)}e^{\frac{\pi i k(a+1)}{N}} \sum_{\substack{\ell = k-\frac{N-1}{2}\\N \mid \ell}}^{k+\frac{N-1}{2}} (-1)^{\ell} e^{-\frac{\pi i \ell (a+1)}{N}}.
		\end{align}
		We next insert \eqref{Evaluation of S} into \eqref{T2 sum} to obtain
		\begin{align*}
			T_1 = \frac{\sqrt{\pi}z^{1/N}}{2\sqrt{N}} \frac{(-1)^{\frac{N+1}{2}}}{\sin\left( \frac{\pi a}{2} \right)} \sum_{k=0}^{2N-1} (-1)^k \exp\left(2N e^{\frac{\pi i k}{N}} z^{\frac{1}{2N}}+ \frac{\pi i k(a+1)}{N} \right) \sum_{\substack{\ell = k-\frac{N-1}{2}\\N \mid \ell}}^{k+\frac{N-1}{2}} (-1)^{\ell} e^{-\frac{\pi i \ell (a+1)}{N}}.
		\end{align*}
		Since the sum over $k$ ranges from $0$ to $2N-1$, $N \mid \ell$ implies that $\ell$ can take only the values $0$ or $N$ for $N=1$ and $0, N$ or $2N$ for $N >1$. Thus we see that $T_1=A_{a, N}(z)$, where $A_{a, N}(z)$ is defined in \eqref{aanzodd}. Along with \eqref{Reduced Meijer G}, this proves \eqref{meijergsimeqn} for $N$ odd.
		%\begin{align}
		%T_2 = \frac{(-1)^{\frac{N+1}{2}}}{2 \sin \left( \frac{\pi a}{2} \right)}\sqrt{\frac{\pi}{N}} z^{1/N} \left [  \sum\limits_{k=0}^{\frac{N-1}{2}} +  \sum\limits_{k= \frac{N+1}{2}}^{\frac{3N-1}{2}} e^{-\pi i a}+  \sum\limits_{k= \frac{3N+1}{2}}^{2N-1} e^{-2\pi i a} \right ](-1)^k \exp \left(2N e^{\frac{\pi i k}{N}} z^{1/2N} + \frac{\pi i k (a+1)}{N} \right)
		%\end{align}
		
			\noindent
		\textbf{Case 2 :}
		In this case, we evaluate the sum $T_2$ for $N$ even. Proceeding analogously as in the case when $N$ is odd and applying \cite[Formula 7.18.1.1, p. 517]{Prudnikov} for the hypergeometric function involved in the term $T_1$, we arrive at
		\begin{equation*}
			T_1 = \frac{\sqrt{\pi}z^{1/N}}{2N^{3/2}} \sum_{k=0}^{2N-1} \exp\left(2N e^{\frac{\pi i k}{N}} (-z)^{\frac{1}{2N}} \right) 
			\sum_{j=1}^N \frac{(-1)^{j+\frac{j}{N}} e^{\frac{2\pi i k j}{N}}}{\cos \left(\pi \left( \frac{a+1}{2N} - \frac{j}{N} \right)\right) }.
		\end{equation*}
		The sum
		\begin{equation*}
			S := \sum_{j=1}^N \frac{(-1)^{j+\frac{j}{N}} e^{\frac{2\pi i k j}{N}}}{\cos \left(\pi \left( \frac{a+1}{2N} - \frac{j}{N} \right) \right)}
		\end{equation*}
		can be evaluated as before. First we write the sum as 
		\begin{equation*}
			S = \frac{1}{\cos\left(\frac{\pi a}{2} \right)}\sum_{j=1}^N (-1)^{j/N} e^{\frac{2\pi i k j}{N}} \frac{\sin \left( N\pi \left( \frac{a+1}{2N} - \frac{j}{N} \right) \right)}{\cos \left(\pi \left( \frac{a+1}{2N} - \frac{j}{N} \right)\right) }
		\end{equation*}
		by utilizing the fact that $\sin \left( N\pi \left( \frac{a+1}{2N} - \frac{j}{N} \right) \right) = (-1)^j \cos\left(\frac{\pi a}{2} \right)$.  Next we apply Lemma \ref{cheby} for even $N$ on the factor $\frac{\sin \left( N\pi \left( \frac{a+1}{2N} - \frac{j}{N} \right) \right)}{\cos \left(\pi \left( \frac{a+1}{2N} - \frac{j}{N} \right)\right) }$ and proceed similarly as in the previous case to arrive at \eqref{meijergsimeqn} with $A_{a,N}(z)$ defined in \eqref{aanzeven}.
	\end{proof}
	\begin{remark}
		Letting $N=1$ in Theorem \ref{meijergsim} and using the formula \cite[p.~73]{dav}\
		\begin{equation}\label{gammaprop}
			\frac{\Gamma(\frac{s}{2})}{\Gamma\left(\frac{1-s}{2}\right)}=\pi^{-1/2}2^{1-s}\Gamma(s)\cos\left(\frac{\pi s}{2}\right)
		\end{equation}
		results in
		\begin{align}\label{g2113}
			G_{1, \, \, 3}^{2, \, \, 1} \left(\begin{matrix}
				1-\frac{a}{2}\\
				1-\frac{a}{2} , 1; \frac{3}{2}
			\end{matrix} \Bigg | \, z \right) =\pi^{-\frac{1}{2}}2^{1-a}\Gamma(a)\cos\left(\frac{\pi a}{2}\right)z^{1-\frac{a}{2}}{}_1F_{2}\left( \left. \begin{matrix}
				1 \\ \frac{1-a}{2}, 1-\frac{a}{2}
			\end{matrix}\right| z \right)-\frac{z\sqrt{\pi}}{\sin\left(\frac{\pi a}{2}\right)}\cosh(2\sqrt{z}),
		\end{align}
		whereas letting $N=2$ in Theorem \ref{meijergsim} and using \eqref{gammaprop} gives
		\begin{align*}
			G_{1, \, \, 5}^{3, \, \, 1} \left(\begin{matrix}
				\frac{3-a}{4}\\
				\frac{3-a}{4} , \frac{1}{2}, 1; \frac{3}{4}, \frac{5}{4}
			\end{matrix} \Bigg | \, z \right)    
			&=2^{\frac{7}{2}-2a}\pi^{-\frac{1}{2}}\Gamma(a-1)\sin\left(\frac{\pi a}{2}\right)z^{\frac{3-a}{4}}{}_1F_{4}\left( \left. \begin{matrix}
				1 \\ \frac{1}{2}-\frac{a}{4}, \frac{3}{4}-\frac{a}{4}, 1-\frac{a}{4}, \frac{5}{4}-\frac{a}{4}
			\end{matrix}\right| -z \right)\nonumber\\
			&\quad+\frac{(-1)^{3/2}\sqrt{\pi z}}{2\sqrt{2}\cos\left(
				\frac{\pi a}{2}\right)}\bigg[\exp{\left(4e^{\frac{\pi i}{4}}z^{\frac{1}{4}}+\frac{\pi i(a+1)}{4}\right)}+e^{-\pi ia}\bigg\{\exp{\left(4e^{\frac{3\pi i}{4}}z^{\frac{1}{4}}+\frac{3\pi i(a+1)}{4}\right)}\nonumber\\
			&\quad-\exp{\left(4e^{\frac{5\pi i}{4}}z^{\frac{1}{4}}+\frac{5\pi i(a+1)}{4}\right)}\bigg\}-e^{-2\pi ia}\exp{\left(4e^{\frac{7\pi i}{4}}z^{\frac{1}{4}}+\frac{7\pi i(a+1)}{4}\right)}\bigg].
		\end{align*}
		We note that \eqref{g2113} is what essentially occurred in the summand of the series on the right-hand side of \eqref{maineqn}.
	\end{remark}
	
	We now derive Theorem \ref{Asymptotic expansion of muKnuN} which will be useful in proving Theorem \ref{Analytic continuation}.
	
\begin{proof}[Theorem \textup{\ref{Asymptotic expansion of muKnuN}}][]
	The Meijer $G$-function in the definition of ${}_{\mu}K_{\nu}^{(N)}(z, w)$ satisfies	$1\leq n \leq p<q,\ 1\leq m\leq q$, and the conditions
		\begin{align*}
			&a_j-b_h\neq1,2,3,\cdots\hspace{5mm}\text{for}\hspace{1mm}j=1,2,\cdots, n,\hspace{0.5mm}\text{and}\hspace{0.5mm}\hspace{2mm}h=1, 2, \cdots, m;    \\
			&a_j-a_t\neq0,\pm1,\pm2,\cdots\hspace{5mm}\text{for}\hspace{1mm}j, t=1,2,\cdots, n,\hspace{0.5mm}\text{and}\hspace{0.5mm}j\neq t, 
		\end{align*}
		and $|\arg(z)|\leq\rho\pi-\delta$ with  $\rho>0,\ \delta\geq0$. Then from \cite[p.~179, Theorem 2]{Luke}, as $|z|\to\infty$, we have
		\begin{align}\label{mgasym}
			G_{p,q}^{\,m,n} \!\left(  \,\begin{matrix} a_1,\cdots , a_p \\ b_1, \cdots b_m; b_{m+1}, \cdots, b_q \end{matrix} \; \Big| z   \right)  \sim \sum_{j=1}^{n}\exp{(-i\pi(\nu+1)a_j)}\Delta_q^{m,n}(j)E_{p,q}\left( z\exp(i\pi(\nu+1))\|a_j \right) .
		\end{align}
		where $\nu :=q-m-n$, $\Delta_q^{m,n}(j):=(-1)^{\nu+1}\frac{\prod\limits_{\substack{\ell=1\\ \ell\neq j}}^n\Gamma(a_j-a_{\ell})\Gamma(1+a_\ell-a_j)}{\prod\limits_{\ell=m+1}^q\Gamma(a_j-b_\ell)\Gamma(1+b_\ell-a_j)}$ and
		\begin{align*}
			&E_{p,q}(z\|a_j):= z^{a_j-1}\tfrac{\prod\limits_{l=1}^{q}\Gamma(1+b_l-a_j)}{\prod\limits_{l=1}^{p}\Gamma(1+a_l-a_j)}{}_qF_{p-1}\left( \left.\begin{matrix}
				1+b_1-a_j,\cdots,1+b_q-a_j \\1+a_1-a_j,\cdots,1+a_{j-1}-a_j,1+a_{j+1}-a_j,\cdots,1+a_p-a_j \end{matrix} \right|-{1\over z} \right) \nonumber\\
		\end{align*}
		We insert, in particular, $m=N+1, \ n=p=1, \ q=2N+1,$ $a_1=1-\mu-\nu-w+{1\over 2N}$ and
		$\left\langle b_\ell \right\rangle_{\ell=1}^{2N+1} = \left\lbrace \left \langle {j\over N} \right\rangle_{j=1}^N, {1\over 2}+{1\over 2N}-\nu, {1}+{1\over 2N}-w, \left\langle 1+{3\over 2N}-{j\over N} \right\rangle_{j=2}^{N} \right\rbrace$, and replace $z$ by ${z^2\over 4}$ 
		in the above result so as to get
		
		\begin{align}\label{Delta}
		%	&\nu = 2N+1-(N+1)-1=N-1,\\
			\Delta_q^{m,n}(j)&=\frac{(-1)^{N} }{\left(\Gamma(-\mu-\nu)\prod_{i=2}^{N}\Gamma(\tfrac{i-1}{N}-\mu-\nu-w)\right) \left( \Gamma(1+\mu+\nu)\prod_{i=2}^{N}\Gamma(1-{i-1\over N}+\mu+\nu+w) \right)}
			\end{align}
		and
		\begin{align}\label{E}
			&E_{p,q}\left( \frac{z^2}{4}\exp(i\pi N)\|a_1 \right)\nonumber\\
			& =\left( \tfrac{z^2}{4}e^{i\pi N}\right)^{a_1-1} \prod_{i=1}^{N}\Gamma\left( 1+\tfrac{i}{N} -a_1\right) \Gamma\left(1+\tfrac{1}{2}+\tfrac{1}{2N}-\nu-a_1 \right)\Gamma\left(2+\tfrac{1}{2N}-w-a_1 \right)\prod_{i=2}^{N}\Gamma\left( 2+\tfrac{3}{2N}-\tfrac{i}{N}-a_1\right)\nonumber\\  
			&\quad\times {}_{2N+1}F_0\left(\left.  \left\langle 1+\tfrac{i}{N}-a_1 \right\rangle_{i=1}^{N}, 1+\tfrac{1}{2}+\tfrac{1}{2N}-\nu-a_1, 2+\tfrac{1}{2N}-w-a_1 , \left\langle 2+\tfrac{3}{2N}-\tfrac{i}{N }-a_1 \right\rangle_{i=2}^{N};- \right| \tfrac{-4}{z^2}e^{-i\pi N}\right)\nonumber\\
			&=\left( \tfrac{z^2}{4}e^{i\pi N}\right)^{a_1-1} \prod_{i=1}^{N}\Gamma\left( \tfrac{2i-1}{2N }+\mu+\nu+w\right)  \Gamma\left(\tfrac{1}{2}+\mu+w \right)\Gamma\left(1+\mu+\nu \right)\prod_{i=2}^{N}\Gamma\left( 1-\tfrac{i-1}{N}+\mu+\nu+w\right)\sum_{k=0}^{m}\tfrac{(-1)^k}{k!}\nonumber\\ 
			&\quad\times\prod_{i=1}^{N}\left( \tfrac{2i-1}{2N}+\mu+\nu+w\right)_k  \left(\tfrac{1}{2}+\mu+w \right)_k \left(1+\mu+\nu \right)_k\prod_{i=2}^{N}\left( 1-\tfrac{i-1}{N}+\mu+\nu+w\right)_k \left(\tfrac{4}{z^2} \right)^ke^{-i\pi k N}+\mathcal{O}\left(\tfrac{1}{z^{2m+4-2a}} \right).  
		\end{align}
		%Putting $a=1-\mu-\nu-w+{1\over 2N}$, we get
%		\begin{align*}
%			&E_{p,q}\left( (z^2/4)\exp(i\pi N)\|a_j \right)\\
%			&=\left( {z^2\over 4}e^{i\pi N}\right)^{a-1} \left(\prod_{i=1}^{N}\Gamma\left( {2i-1\over 2N }+\mu+\nu+w\right)  \right) \Gamma\left({1\over 2}+\mu+w \right) \Gamma\left(1+\mu+\nu \right)\left(\prod_{i=2}^{N}\Gamma\left( 1-{i-1\over N}+\mu+\nu+w\right) \right)\\ 
%			&\times {}_{2N+1}F_0\left(\left.  \left\langle {2i-1\over 2N }+\mu+\nu+w\right\rangle_{i=1}^{N}, {1\over 2}+\mu+w , 1+\mu+\nu, \left\langle 1-{i-1\over N}+\mu+\nu+w \right\rangle_{i=2}^{N} \right| -{4\over z^2}e^{-i\pi N}\right)\\
%			&=\left( {z^2\over 4}e^{i\pi N}\right)^{a-1} \left(\prod_{i=1}^{N}\Gamma\left( {2i-1\over 2N }+\mu+\nu+w\right)  \right) \Gamma\left({1\over 2}+\mu+w \right)\Gamma\left(1+\mu+\nu \right)\left(\prod_{i=2}^{N}\Gamma\left( 1-{i-1\over N}+\mu+\nu+w\right) \right)\\ 
%			&\times\sum_{k=0}^{m}{(-1)^k \over k!}\left(\prod_{i=1}^{N}\left( {2i-1\over 2N }+\mu+\nu+w\right)_k  \right) \left({1\over 2}+\mu+w \right)_k \left(1+\mu+\nu \right)_k\left(\prod_{i=2}^{N}\left( 1-{i-1\over N}+\mu+\nu+w\right)_k \right)\\ &\times\left({4\over z^2} \right)^ke^{-i\pi k N}+\mathcal{O}\left({1\over z^{2m+4-2a}} \right).  
%		\end{align*}
		Thus, substituting \eqref{Delta} and \eqref{E} in \eqref{mgasym}, we see that as $z \to \infty$,
		\begin{align*}
			&G_{1 \ 2N+1}^{N+1 \ 1}\left(\left.\begin{matrix}
				1+{1\over 2N}-\mu-\nu-w \\  \left\langle {i\over N}\right\rangle_{i=1}^{N}, {1\over 2}+{1\over 2N}-\nu;1+{1\over 2N}-w, \left\langle {1}+{3\over2N}-{i\over N}\right\rangle_{i=2}^{N}	\end{matrix} \right| \frac{z^2}{4} \right) \\
			&=\frac{(-1)^{N} }{\Gamma(-\mu-\nu)\prod_{i=2}^{N}\Gamma({i-1\over N}-\mu-\nu-w) \Gamma(1+\mu+\nu)\prod_{i=2}^{N}\Gamma(1-{i-1\over N}+\mu+\nu+w)}\\
			&\quad\times\sum_{k=0}^{m}{(-1)^k \over k!}\prod_{i=1}^{N}\Gamma\left( {2i-1\over 2N }+\mu+\nu+w+k\right)  \Gamma \left({1\over 2}+\mu+w +k\right) \Gamma\left(1+\mu+\nu +k\right)\\ 
			&\quad\times\prod_{i=2}^{N}\Gamma\left( 1-{i-1\over N}+\mu+\nu+w+k\right) \left({4\over z^2} \right)^{k+\mu+\nu+w-{1 \over 2N}}e^{-i\pi (k+1) N}+\mathcal{O}\left({1\over z^{2m+2-{1 \over N}+2\mu+2\nu+2w}} \right),
		\end{align*} 
	which, with the help of the definition \eqref{muknug}, the reflection formula \eqref{refl} and the fact that
	\begin{equation*}
	\prod_{i=1}^{N}\Gamma\left( {2i-1\over 2N }+\mu+\nu+w+k\right) \prod_{i=2}^{N}\Gamma\left( 1-{i-1\over N}+\mu+\nu+w+k\right)=\prod_{i=1}^{2N-1}\Gamma\left(\frac{i}{2N}+\mu+\nu+w+k\right),
	\end{equation*}
	yields \eqref{estk} upon simplification.
		
%		Using the definition \eqref{muKnu}, we have
%		\begin{align*}
%			{}_\mu K_{\nu}^{(N)}(z, w) &= \frac{2^{\mu+\frac{2}{N}-1+2\mu+2\nu+2w-{1\over N}} \pi^{(1-N)\nu} z^{w+\nu-\frac{2}{N}-2\mu-2\nu-2w+{1\over N}}}{\left(\Gamma(-\mu-\nu)\prod_{i=2}^{N}\Gamma({i-1\over N}-\mu-\nu-w)\right) \left( \Gamma(1+\mu+\nu)\prod_{i=2}^{N}\Gamma(1-{i-1\over N}+\mu+\nu+w) \right)}\\
%			&\times\sum_{k=0}^{m}{(-1)^{k(N+1)} \over k!}\prod_{i=1}^{N}\Gamma\left( {2i-1\over 2N }+\mu+\nu+w+k\right)  \Gamma \left({1\over 2}+\mu+w +k\right) \Gamma\left(1+\mu+\nu +k\right)\\ 
%			&\prod_{i=2}^{N}\Gamma\left( 1-{i-1\over N}+\mu+\nu+w+k\right) \left({z \over 2} \right)^{-2k}+\mathcal{O}\left({1\over z^{2m+2+{1 \over N}+2\mu+2\nu+2w}} \right).
%		\end{align*}
%		Finally we apply reflection formula to arrive at
%		\begin{align}
%			{}_\mu K_{\nu}^{(N)}(z, w) &= \frac{2^{3\mu+2\nu+2w+\frac{1}{N}-1} \pi^{(1-N)\nu-N} }{z^{2\mu+\nu+w+{1\over N}}}\sin (\pi(\mu+\nu))\prod_{i=2}^{N}\sin \left(\pi \left( \mu+\nu+w-{i-1\over N}\right) \right) \nonumber  \\
%			&\times\sum_{k=0}^{m}{(-1)^{k(N+1)+N} \over k!}\prod_{i=1}^{N}\Gamma\left( {2i-1\over 2N }+\mu+\nu+w+k\right)  \Gamma \left({1\over 2}+\mu+w +k\right) \Gamma\left(1+\mu+\nu +k\right)\nonumber \\ 
%			&\prod_{i=2}^{N}\Gamma\left( 1-{i-1\over N}+\mu+\nu+w+k\right) \left({z \over 2} \right)^{-2k}+\mathcal{O}\left({1\over z^{2m+2+{1 \over N}+2\mu+\nu+w}} \right),
%		\end{align}
%		which concludes our lemma.
		
		\section{Proofs of the main transformations}
		
		\subsection{Proof of Theorem \ref{In terms of G}}
		It follows from \cite[Equation (6.16)]{DMV} that for $f(n) = \exp(-ny)$ with $y>0$ and $-1<\re(a)<N$, we have
		\begin{align}\label{ffinal}
			\sum_{n=1}^\infty \sigma_a^{(N)}(n) e^{-ny}  &= -\frac{\zeta(-a)}{2} + \frac{\zeta(N-a)}{y} + \frac{1}{N} \frac{\Gamma \left(\frac{1+a}{N}\right) \zeta \left(\frac{1+a}{N}\right)}{y^{\frac{1+a}{N}}} \nonumber\\
			&+ \frac{(2\pi)^{(N+1)\left(\frac{1+a}{N} \right)-a}}{\pi^2} \sum_{n=1}^\infty S_a^{(N)}(n) \int_0^\infty H_a^{(N)} \left( (2\pi)^{1+1/N} (nt)^{1/N} \right) t^{\frac{1+a}{N}-1} \exp(-ty) \, dt,
		\end{align}
		where $H_a^{(N)}(x)$ is defined in \eqref{HaNx}.
	From \cite[p.~40-41]{DMV}, for $ \max\left\{ 0,  \frac{1 -N +  \re(a)}{N} \right\} < c=\textup{Re}(s)\leq\frac{1+\textup{Re}(a)}{N (N+1)} $,
		\begin{equation*}
			\mathcal{I} := \int_0^\infty H_a^{(N)} \left( \alpha t^{1/N} \right) t^{\frac{1+a}{N}-1} \exp(-ty) \, dt=\frac{{{y^{ - \frac{{(1 + a)}}{N}}}}}{{4i}}\int\limits_{(c)} \frac{{\Gamma (Ns)\cos \left( {\frac{{\pi Ns}}{2}} \right)}}{{\cos \left( {\frac{\pi }{{2N}}(1 + a - Ns)} \right)}} {{{\left( {\frac{{{\alpha ^N}}}{y}} \right)}^{ - s}} ds},
		\end{equation*}
		where $\alpha = (2\pi)^{1+1/N} n^{1/N}$.
		Letting $b=\alpha^N/y$, using the fact that 
	\begin{equation}\label{refl1}
		\frac{\pi}{\cos(\pi s)}=\Gamma\left(\frac{1}{2}+s\right)\Gamma\left(\frac{1}{2}-s\right)
		\end{equation}
		 and substituting $s$ by $2s$, we see that for $ \max\left\{ 0,  \frac{1 -N +  \re(a)}{2N} \right\} < c'=\textup{Re}(s)\leq\frac{1+\textup{Re}(a)}{2N (N+1)} $,
		\begin{align*}
			\mathcal{I}&= \frac{{{y^{ - \frac{{1 + a}}{N}}}}}{{2i}}\int\limits_{(c')} {\frac{{\Gamma (2Ns)\Gamma \left( {\frac{1}{2} + \frac{{1 + a}}{{2N}} - s} \right)\Gamma \left( {\frac{1}{2} - \frac{{1 + a}}{{2N}} + s} \right)}}{{\Gamma \left( {\frac{1}{2} + Ns} \right)\Gamma \left( {\frac{1}{2} - Ns} \right)}}{b^{ - 2s}}ds}  
		\end{align*}
		Employ the Gauss multiplication formula \eqref{gmf} to obtain
		\begin{align*}
			&\Gamma (2Ns) = {(2\pi )^{\frac{1}{2} - N}}{(2N)^{2Ns - \frac{1}{2}}}\prod\limits_{k = 0}^{2N - 1} {\Gamma \left( {s + \frac{k}{{2N}}} \right)}   \nonumber \\ 
			& \Gamma \left( {\frac{1}{2} \pm Ns} \right) = \Gamma \left( {N\left( {\frac{1}{{2N}} \pm s} \right)} \right) = {(2\pi )^{\frac{1}{2}\left( {1 - N} \right)}}{N^{ \pm Ns}}\prod\limits_{k = 0}^{N - 1} {\Gamma \left( {\frac{1}{{2N}} \pm s + \frac{k}{N}} \right)}.
		\end{align*}
		Thus the integral evaluates to
		\begin{align*}
			\mathcal{I}&= \frac{{{y^{ - \frac{{(1 + a)}}{N}}}}}{{4i\sqrt {N\pi } }}\int\limits_{(c')} {\frac{{ \Gamma \left( {\frac{1}{2} - \frac{{1 + a}}{{2N}} + s} \right)\Gamma \left( {\frac{1}{2} + \frac{{1 + a}}{{2N}} - s} \right)\prod\limits_{k = 0}^{2N - 1} {\Gamma \left( {s + \frac{k}{{2N}}} \right)}}}{{\prod\limits_{k = 0}^{N - 1} {\Gamma \left( {\frac{1}{{2N}} + \frac{k}{N} + s} \right)} \Gamma \left( {\frac{1}{{2N}} + \frac{k}{N} - s} \right)}}{{\left( {\frac{{{{(2N)}^{2N}}}}{{{b^2}}}} \right)}^{s}}ds} \nonumber\\
			%&= \frac{{{y^{ - \frac{{1 + a}}{N}}}}\sqrt{\pi}}{{2\sqrt {N} }}{1\over 2\pi i}\int\limits_{(c')} {\frac{{\prod\limits_{k = 0}^{2N - 1} {\Gamma \left( {s + \frac{k}{{2N}}} \right)} \Gamma \left( {\frac{1}{2} - \frac{{1 + a}}{{2N}} + s} \right)\Gamma \left( {\frac{1}{2} + \frac{{1 + a}}{{2N}} - s} \right)}}{{\prod\limits_{k = 0}^{N - 1} {\Gamma \left( {\frac{1}{{2N}} + \frac{k}{N} + s} \right)} \Gamma \left( {\frac{1}{{2N}} + \frac{k}{N} - s} \right)}}{{\left( {\frac{{{{(2N)}^{2N}}}}{{{b^2}}}} \right)}^{s}}ds} \nonumber\\
			&= \frac{{{y^{ - \frac{{(1 + a)}}{N}}}}\sqrt{\pi}}{{2\sqrt {N} }}{1\over 2\pi i}\int\limits_{(c')} {\frac{{ \Gamma \left( {\frac{1}{2} - \frac{{1 + a}}{{2N}} + s} \right)\Gamma \left( {\frac{1}{2} + \frac{{1 + a}}{{2N}} - s} \right)\prod\limits_{k = 0}^{N-1} {\Gamma \left( {s + \frac{k}{{N}}} \right)}}}{\prod\limits_{k=0}^{N-1}{\Gamma \left( {\frac{1}{{2N}} + \frac{k}{N} - s} \right)}}{{\left( {\frac{{{{(2N)}^{2N}}}}{{{b^2}}}} \right)}^{s}}ds},
		\end{align*}
		where in the last step, we use the fact that
		$$\prod_{k = 0}^{2N - 1} \Gamma \left( s + \frac{k}{2N}\right) = \prod_{k = 0}^{N - 1} \Gamma \left( s + \frac{2k+1}{2N}\right)\prod_{k = 0}^{N-1} \Gamma \left( s + \frac{2k}{2N}\right).$$
		Substituting $s=-w+\frac{1}{N}$ and replacing $k$ by $i-1$, we see that for $ \frac{1}{N}-\frac{1+\textup{Re}(a)}{2N (N+1)} \leq c''=\textup{Re}(w)<\min\left\{ \frac{1}{N},  \frac{N + 1- \re(a)}{2N} \right\} $,
		\begin{equation*}
			\mathcal{I}= \frac{{{2N^{3/2}}}\sqrt{\pi}}{y^{\frac{{1 + a}}{N}}{b^{\frac{2}{N}} }}{1\over 2\pi i}\int\limits_{(c'')} {\frac{{ \Gamma \left( {\frac{1}{2} - \frac{{ a-1}}{{2N}} - w} \right)\Gamma \left( {\frac{1}{2} + \frac{{a-1}}{{2N}} + w} \right)\prod\limits_{i = 1}^{N} {\Gamma \left( {-w + \frac{i}{{N}}} \right)}}}{\prod\limits_{i=1}^{N}{\Gamma \left( {-\frac{3}{{2N}} + \frac{i}{N} + w} \right)}}{{\left( {\frac{{{{(2N)}^{2N}}}}{{{b^2}}}} \right)}^{-w}}dw}.
		\end{equation*}
	Now using the above representation for $\mathcal{I}$ in \eqref{ffinal} and then expressing it in terms of Meijer $G$-function using the definition in \eqref{MeijerG}, we arrive at \eqref{In terms of G_eqn} for $-1<\re(a)<N$. With the help of \eqref{muknug}, it is straightforward to see that \eqref{Using muKnuN} is equivalent to \eqref{In terms of G_eqn}.
		
		Next, we extend the validity of \eqref{Using muKnuN} to $\textup{Re}(a)>-1$.  To that end, we first show the uniform convergence of the series $\sum\limits_{n=1}^\infty \frac{S_a^{(N)}(n)}{n^{\frac{a}{2N}}}{}_{\frac{1}{2}} K_{\frac{a}{2N}}^{(N)}\left(\frac{4\pi^{N+1}n}{yN^N}, 0\right)$ in Re$(a)\geq-1+\epsilon$ for any $\epsilon>0$. From \eqref{defbf},
		\begin{equation*}
			|S_a^{(N)}(n)| = \sum_{d_1^N d_2 = n}d_2^{\frac{1+\re(a)}{N}-1} \leq \sum_{d_1d_2 = n}d_2^{\frac{1+\re(a)}{N}-1} = \sigma_{\frac{1+\re(a)}{N}-1}(n)
		\end{equation*}
		Thus it follows from the elementary bound on the divisor function that 
		\begin{equation}\label{Bound of divisor function}
			|S_a^{(N)}(n) n^{-\frac{a}{2N}}| \ll n^{\frac{1}{2}\left\vert \frac{1+\re(a)}{N}-1 \right\vert + \frac{1}{2N} - \frac{1}{2} + \epsilon} = n^{\max \left(-\frac{\re(a)}{2N}, \frac{\re(a)}{2N} + \frac{1}{N} - 1 \right) +\epsilon} ,
		\end{equation}
		for every $\epsilon>0$. On the other hand, Lemma \ref{Asymptotic expansion of muKnuN} provides ${}_{\frac{1}{2}}K_{\frac{a}{2N}}^{(N)}\left( \frac{4 \pi^{N+1} n}{y N^N}, 0 \right) = \mathcal{O}(n^{-1-\frac{1}{N}-\frac{\re(a)}{2N}})$. Hence the series $\sum\limits_{n=1}^\infty \frac{S_a^{(N)}(n)}{n^{\frac{a}{2N}}}{}_{\frac{1}{2}} K_{\frac{a}{2N}}^{(N)}\left(\frac{4\pi^{N+1}n}{yN^N}, 0\right)$ converges absolutely and uniformly as long as 
		%$$
		%\max \left(-\frac{\re(a)}{2N}, \frac{\re(a)}{2N} + \frac{1}{N} - 1 \right) +\epsilon-1-\frac{1}{N}-\frac{\re(a)}{2N}<-1
		%$$
		$\re(a)>-1$. Since the summand of the series is analytic for $\re(a)>-1$, it follows from Weierstrass’ theorem on analytic functions that the series is an analytic function of $a$ in $\re(a)>-1$. The left-hand side of \eqref{Using muKnuN} is also analytic for $\re(a)>-1$, thus it follows from the principle of analytic continuation that \eqref{Using muKnuN} holds for $\re(a)>-1$ and $y>0$. Using similar method as above, both sides of \eqref{Using muKnuN} are seen to be analytic, as a function of $y$, in $\re(y) > 0$. This proves \eqref{Using muKnuN} in its entirety.
		
		\subsection{Proof of Theorem \ref{Analytic continuation}}
		As was shown in the proof of Theorem \ref{In terms of G}, the identity in \eqref{Using muKnuN} holds for Re$(a)>-1$. We first rewrite it in an equivalent form and then use analytic continuation. 
		
		Recall the definition of $C_{m, N}$ from \eqref{cmn}, where $m\geq0$. We begin by adding and subtracting the term 
		\begin{equation*}
		2^{\frac{1}{2} + \frac{a+1}{N}} \pi^{\frac{(1-N)a}{2N} -N} \left(\frac{4\pi^{N+1}n}{yN^N}\right)^{-1-\frac{1}{N}-\frac{a}{2N}} \frac{\sin\(\frac{\pi}{2}(N-a)\)}{2^{N-1}} C_{m,N}\left(\frac{1}{2}, \frac{a}{2N}, 0, \frac{4\pi^{N+1}n}{yN^N}\right)
		\end{equation*}
		 inside the summand of the series on the right-hand side of \eqref{Using muKnuN} and then simplify a bit to arrive at
		\begin{align}\label{additional}
			&\sum_{n=1}^\infty \sigma_a^{(N)}(n) e^{-ny}  + \frac{\zeta(-a)}{2} - \frac{\zeta(N-a)}{y} - \frac{1}{N} \frac{\Gamma \left(\frac{1+a}{N}\right) \zeta \left(\frac{1+a}{N}\right)}{y^{\frac{1+a}{N}}} = \frac{2 (2\pi)^{\frac{1}{N}-\frac{1}{2}} N^{\frac{a-1}{2}}}{y^{\frac{1}{N}+\frac{a}{2N}}} \sum_{n=1}^\infty \frac{S_a^{(N)}(n)}{n^{\frac{a}{2N}}} \bigg [{}_{\frac{1}{2}} K_{\frac{a}{2N}}^{(N)}\left(\tfrac{4\pi^{N+1}n}{yN^N}, 0\right) \nonumber\\
			&- \frac{2^{\frac{1}{2} + \frac{a+1}{N}} \pi^{\frac{(1-N)a}{2N} -N}} {\left(\frac{4\pi^{N+1}n}{yN^N}\right)^{1+\frac{1}{N}+\frac{a}{2N}}}\frac{\sin\(\frac{\pi}{2}(N-a)\)}{2^{N-1}}   C_{m,N}\left(\frac{1}{2}, \frac{a}{2N}, 0; \frac{4\pi^{N+1}n}{yN^N}\right)\bigg] +D_{m, N}(a, y),
		\end{align}
	where
	\begin{align*}
	D_{m, N}(a, y)&:= \frac{y N^{a+N+\frac{1}{2}}}{2 \pi^{2N+a+\frac{5}{2}}} \frac{\sin\(\frac{\pi}{2}(N-a)\)}{2^{N-1}}  \sum_{n=1}^{\infty}\frac{S_a^{(N)}(n)}{n^{1+{a+1\over N}}}C_{m,N}\left(\frac{1}{2}, \frac{a}{2N}, 0; \frac{4\pi^{N+1}n}{yN^N}\right).
\end{align*}
Inserting the definition of $C_{m,N}\left(\tfrac{1}{2}, \tfrac{a}{2N}, 0, \tfrac{4\pi^{N+1}n}{yN^N}\right)$ from \eqref{cmnsimplified} and then using the formula \cite[Equation (1.14)]{DMV} 
\begin{equation}
	\sum_{n=1}^{\infty}\frac{S_a^{(N)}(n)}{n^{s}} = \zeta(Ns)\zeta\left(s+1-\frac{1+a}{N}\right)\hspace{8mm}\left(\re(s)>\max\left\{\frac{1}{N}, \frac{1+\re(a)}{N}\right\}\right)\label{Sdirichlet}
\end{equation}
which is justified since Re$(a)>-1$, we get
\begin{align*}
	D_{m, N}(a, y)
	%&=\frac{y N^{a+N+\frac{1}{2}}}{2^N \pi^{2N+a+\frac{5}{2}}} \sin\(\frac{\pi}{2}(N-a)\) \sum_{k=0}^m\frac{(-1)^{k(N+1)+N}}{\left(\tfrac{2\pi^{N+1}}{yN^N}\right)^{2k}}\prod_{i=1}^{2N} \Gamma \left(\tfrac{1}{2}+\tfrac{a}{2N}+k+\tfrac{i}{2N} \right)\nonumber\\
	%&\qquad\qquad\qquad\qquad\qquad\qquad\qquad\quad\times\zeta(2kN+N+a+1)\zeta(2k+2)\nonumber\\
	&=\frac{y}{\pi^2(2\pi)^{1+N+a}}\sin\(\frac{\pi}{2}(N-a)\) \sum_{k=0}^m\frac{(-1)^{k(N+1)+N}}{\left(\tfrac{(2\pi)^{N+1}}{yN^N}\right)^{2k}} \Gamma \left(2kN+N+a+1 \right)\nonumber\\
	&\qquad\qquad\qquad\qquad\qquad\qquad\qquad\quad\times\zeta(2kN+N+a+1)\zeta(2k+2)\nonumber\\
	&=\frac{y}{2\pi^{2}} \sum_{k=0}^m \left(-\frac{y^2}{4\pi^2}\right)^k   \zeta(-2kN-N-a)\zeta(2k+2), 
	\end{align*}
where in the last step, we employed \eqref{zetafe}.

Substituting this expressions for $D_{m, N}(a, y)$ in \eqref{additional}, we see that \eqref{Eqn:Analytic continuation} holds for Re$(a)>-1$. 
		The validity of \eqref{Eqn:Analytic continuation} is now extended to $\re(a)>-(2m+2)N-1$ using Theorem \ref{Asymptotic expansion of muKnuN}. To that end, let $\mu=1/2, \nu=a/(2N), w=0$ and $z=\frac{4\pi^{N+1}n}{yN^N}$ in Theorem \ref{Asymptotic expansion of muKnuN}. Also, using \eqref{gmf} and \eqref{refl1}, we see that
			\begin{align*}
						\prod_{i=1}^N \cos\left(\pi\left(\frac{a}{2N} - \frac{i-1}{N} \right) \right) &= \prod_{i=1}^N \frac{\pi}{\Gamma\left(\frac{1}{2}+\frac{a}{2N}- \frac{i-1}{N} \right)\Gamma\left(\frac{1}{2}-\frac{a}{2N}+ \frac{i-1}{N} \right)} \nonumber\\
%						&=\pi^N \prod_{i=1}^N \frac{1}{\Gamma\left(-\frac{1}{2}+\frac{a+2}{2N}+\frac{i-1}{N} \right)\Gamma\left(\frac{1}{2}-\frac{a}{2N}+ \frac{i-1}{N} \right)}\nonumber\\
						&= \frac{\pi^N}{\left\lbrace (2\pi)^{\frac{N-1}{2}}N^{\frac{1}{2} - N(-\frac{1}{2}+\frac{a+2}{2N})}\Gamma\left(-\frac{N}{2}+\frac{a}{2}+1 \right)\right\rbrace 
								\left\lbrace (2\pi)^{\frac{N-1}{2}}N^{\frac{1}{2} - N(\frac{1}{2}-\frac{a}{2N})}\Gamma\left(\frac{N}{2}-\frac{a}{2} \right)\right\rbrace}\nonumber\\
%						&=\frac{\pi^N}{(2\pi)^{N-1}N^{1+N/2-a/2-1-N/2+a/2}} \times \frac{\sin\left(\frac{\pi N}{2} - \frac{a\pi}{2} \right)}{\pi}
						& = \frac{\sin\left(\frac{\pi N}{2} - \frac{a\pi}{2} \right)}{2^{N-1}}.
					\end{align*}
		Therefore, as $n\to\infty$, Theorem \ref{Asymptotic expansion of muKnuN} and \eqref{cmn} imply
		\begin{align}\label{additional1}
{}_{\frac{1}{2}} K_{\frac{a}{2N}}^{(N)}\left(\tfrac{4\pi^{N+1}n}{yN^N}, 0\right) - \frac{2^{\frac{1}{2} + \frac{a+1}{N}} \pi^{\frac{(1-N)a}{2N} -N}} {\left(\frac{4\pi^{N+1}n}{yN^N}\right)^{1+\frac{1}{N}+\frac{a}{2N}}}\frac{\sin\(\frac{\pi}{2}(N-a)\)}{2^{N-1}} C_{m,N}\left(\frac{1}{2}, \frac{a}{2N}, 0, \frac{4\pi^{N+1}n}{yN^N}\right)=\mathcal{O}\left(n^{-2m-3-\frac{1}{N}-\frac{\re(a)}{2N}}\right).
\end{align}		
Thus, along with \eqref{Bound of divisor function}, \eqref{additional1} implies that for any $\epsilon>0$, 
the summand of the series on the right hand side of \eqref{Eqn:Analytic continuation} is $\mathcal{O}\left(n^{\max \left(-\frac{\re(a)}{2N}, \frac{\re(a)}{2N} + \frac{1}{N} - 1 \right)-2m-3-\frac{1}{N}-\frac{\re(a)}{2N}+\epsilon} \right)$.
%		\begin{align*}
%			&\frac{S_a^{(N)}(n)}{n^{\frac{a}{2N}}} \bigg [{}_{\frac{1}{2}} K_{\frac{a}{2N}}^{(N)}\left(\tfrac{4\pi^{N+1}n}{yN^N}, 0\right) - \frac{2^{\frac{1}{2} + \frac{a+1}{N}} \pi^{\frac{(1-N)a}{2N} -N}} {\left(\frac{4\pi^{N+1}n}{yN^N}\right)^{1+\frac{1}{N}+\frac{a}{2N}}}\frac{\sin\(\frac{\pi}{2}(N-a)\)}{2^{N-1}} C_{m,N}\left(\frac{1}{2}, \frac{a}{2N}, 0, \frac{4\pi^{N+1}n}{yN^N}\right)\bigg] \nonumber\\
%			&= \mathcal{O}\left(n^{\max \left(-\frac{\re(a)}{2N}, \frac{\re(a)}{2N} + \frac{1}{N} - 1 \right)-2m-3-\frac{1}{N}-\frac{\re(a)}{2N}+\epsilon} \right)
%		\end{align*}
		Thus the above series is uniformly convergent for $\re(a)\geq-(2m+2)N-1+\epsilon$ for any $\epsilon>0$. Since the summand of the above series is analytic in $\re(a)>-(2m+2)N-1$, Weierstrass' theorem on analytic function implies that this series represents an analytic function of $a$ for $\re(a)>-(2m+2)N-1$. 
		Since the left-hand side of \eqref{Eqn:Analytic continuation} as well as the finite sum on its right-hand side are also analytic in $\re(a)>-(2m+2)N-1$, by invoking the principal of analytic continuation we conclude that \eqref{Eqn:Analytic continuation} holds for $\re(a)>-(2m+2)N-1$. This completes the proof of the theorem.
	\end{proof}
	
	\section{New results on derivatives of Mittag-Leffler functions}
	The two-variable Mittag-Leffler function $E_{\alpha, \beta}(z)$ of Wiman \cite{wiman} is defined by
	\begin{align}\label{2varmldef}
		E_{\alpha, \beta}(z):=\sum_{k=0}^{\infty}\frac{z^k}{\G(\alpha k+\beta)}\hspace{10mm}(\textup{Re}(\alpha)>0, \textup{Re}(\beta)>0).
	\end{align}
These functions have found applications in various fields of science and engineering such as random walks, L\'{e}vy flights, fractional Ohm's law, time and space fractional diffusion, nonlinear waves, electric field relaxations, viscoelastic systems, chemical reactions and statistical distributions. In Mathematics, they occur in fractional calculus, fractional order integral or differential equations etc. See \cite{apelblat}, \cite{gkmr}, \cite{mieghem}, and the references therein.

Differentiating both sides  of \eqref{2varmldef} with respect to $\beta$ yields
\begin{equation*}
	\frac{\partial E_{\alpha, \beta}(z)}{\partial\beta}=-\sum_{h=0}^{\infty}\frac{\psi(\alpha h+\beta)}{\Gamma(\alpha h+\beta)}z^h.
\end{equation*}
%In Proposition \ref{}, we studied a generalization of the function $\left.\frac{\partial E_{2, \b}}{\partial\beta}\right|_{\beta=1}$ and obtained its closed-form evaluation in terms of hyperbolic functions and hyperbolic sine and cosine integrals. 

Closed-form evaluations of derivatives of two-variable Mittag-Leffler functions with respect to their parameters were recently given in \cite{apelblat}, where the calculations were performed using \emph{Mathematica}.
 
	Let $\re(z)>0$. From Lemma 3.2 of \cite{DGKM} and Lemma 9.1 from \cite{dkk}, we have, following the notation in \eqref{brevity}, 
	\begin{equation}
			\sum_{h=0}^{\infty} \frac{(\psi(2h+1)-\log(z))}{\Gamma(2h+1)}z^{2h}=\int_{0}^{\infty}\frac{t\cos(t)}{t^2+z^2}\, dt=\left(\sinh\operatorname{Shi}-\cosh\operatorname{Chi}\right)(z)\label{2equiv},
	\end{equation}
which gives the closed form evaluation of $\left.\frac{\partial E_{2, \b}(z^2)}{\partial\beta}\right|_{\beta=1}$ since $\sum_{h=0}^{\infty}\frac{z^{2h}}{\G(2h+1)}=\cosh(z)$.

\begin{remark}\label{extension}
The identity obtained by equating the extreme sides of \eqref{2equiv}, that is,
\begin{equation}
\sum_{h=0}^{\infty} \frac{(\psi(2h+1)-\log(z))}{\Gamma(2h+1)}z^{2h}=\left(\sinh\operatorname{Shi}-\cosh\operatorname{Chi}\right)(z)\label{2equiv1}
\end{equation}
 actually holds for $|\arg(z)|<\pi$ as can be seen by analytic continuation.
\end{remark}

Dixon and Ferrar \cite[Equation (3.12)]{dixfer1} defined the function
\begin{equation*}
K_{/-\nu}(z)=\frac{1}{\pi}\sum_{h=0}^{\infty}\frac{(z/2)^{2h}}{\Gamma(h+1)\Gamma(h+\nu+1)}\left\{2\log\left(\frac{z}{2}\right)-\psi(h+1)-\psi(h+\nu+1)\right\},	
\end{equation*}
which played an important role in their proof of the Vorono\"{\dotlessi} summation formula for $\sum_{n=1}^{\infty}d(n)f(n)$. Logarithmically differentiating \eqref{gmf} so as to get \cite[p.~138, Equation \textbf{5.5.9}]{NIST}
\begin{equation*}
	\sum_{k=1}^{m}\psi\left(z+\frac{k-1}{m}\right)=m\left(\psi(mz)-\log(m)\right),
\end{equation*}
we see that the extreme left-hand side of \eqref{2equiv} is nothing but $-\frac{1}{2}\pi^{3/2}K_{/-\frac{1}{2}}(z)$.
	In the following proposition, we generalize \eqref{2equiv} which gives an \emph{almost} closed-form evaluation of  $\left.\frac{\partial E_{2N, \b}(z^{2N})}{\partial\beta}\right|_{\beta=1}$. The first part of this proposition will be instrumental in proving Corollary \ref{gencompanion}.
	\begin{proposition}\label{kuchnahi}
		Let $\re(z)>0$ and $N\in\mathbb{N}$. Then 
		\begin{itemize}
			\item[{\rm(a)}]
			For $N$ odd, we have 
			\begin{align}\label{Generalized Dixon-Ferror identity odd}
				\sum_{h=0}^{\infty} \frac{\left(\psi(2Nh+1)-\log z\right)}{\Gamma(2Nh+1)}z^{2Nh} = \frac{1}{N}\sum_{k=-\frac{N-1}{2}}^{\frac{N-1}{2}} \left(\sinh\operatorname{Shi}-\cosh\operatorname{Chi}\right) \left(z e^{\frac{\pi i k}{N}} \right)+ \frac{\pi}{2N}\sum_{j=1}^{N-1} \frac{(-1)^j}{\sin \left(\frac{\pi j}{N} \right)} \sum_{h=0}^\infty \frac{z^{2Nh+2j}}{(2Nh+2j)!}.
			\end{align}
			\item[{\rm(b)}]
			For $N$ even,
			\begin{align}\label{Generalized Dixon-Ferror identity even}
				\sum_{h=0}^{\infty} \frac{\left(\psi(2Nh+1)-\log z-\frac{i\pi}{2N}\right)}{\Gamma(2Nh+1)}z^{2Nh} &= \frac{1}{N}\sum_{k=-\frac{N}{2}+1}^{\frac{N}{2}}  \left(\sinh\operatorname{Shi}-\cosh\operatorname{Chi}\right) \left(z e^{\frac{\pi i k}{N}} \right)\nonumber\\
				&+  \frac{\pi}{2N}\sum_{j=1}^{N-1} \frac{(-1)^{j+\frac{j}{N}}}{\sin \left(\frac{\pi j}{N} \right)} \sum_{h=0}^{\infty} \frac{z^{2Nh+2j}}{(2Nh+2j)!}.
			\end{align}
		\end{itemize}
	\end{proposition}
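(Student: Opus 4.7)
The plan is to extract the subseries indexed by multiples of $N$ from the identity \eqref{2equiv1} via a roots-of-unity filter. By the analytic continuation noted in Remark \ref{extension}, \eqref{2equiv1} holds on $|\arg z|<\pi$, so I may substitute $z\mapsto ze^{\pi ik/N}$ and use $\log(ze^{\pi ik/N})=\log z+\pi ik/N$ to obtain, for each integer $k$,
\begin{equation*}
\sum_{h=0}^{\infty}\frac{\psi(2h+1)-\log z-\pi ik/N}{\Gamma(2h+1)}e^{2\pi ikh/N}z^{2h}=\left(\sinh\operatorname{Shi}-\cosh\operatorname{Chi}\right)(ze^{\pi ik/N}).
\end{equation*}
I then average this identity over a complete residue system $\mathcal{K}$ modulo $N$: namely $\mathcal{K}=\{-(N-1)/2,\ldots,(N-1)/2\}$ when $N$ is odd, and $\mathcal{K}=\{-N/2+1,\ldots,N/2\}$ when $N$ is even. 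The right-hand side of the averaged identity matches the sum of $\left(\sinh\operatorname{Shi}-\cosh\operatorname{Chi}\right)(ze^{\pi ik/N})$ appearing on the right of \eqref{Generalized Dixon-Ferror identity odd} and \eqref{Generalized Dixon-Ferror identity even}.

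Interchanging the order of summation on the left-hand side yields two pieces. The first is controlled by the orthogonality $\frac{1}{N}\sum_{k\in\mathcal{K}}e^{2\pi ikh/N}=\mathbf{1}[N\mid h]$, which isolates the subseries $\sum_{h=0}^{\infty}\frac{\psi(2Nh+1)-\log z}{\Gamma(2Nh+1)}z^{2Nh}$ that we want on the left. The second piece, coming from the factor $-\pi ik/N$, is governed by the twisted sum $T_h:=\frac{1}{N}\sum_{k\in\mathcal{K}}k\,e^{2\pi ikh/N}$. I would evaluate $T_h$ by differentiating the geometric identity $\sum_{j=0}^{N-1}x^j=(x^N-1)/(x-1)$ at $x=e^{2\pi ih/N}$ and reshifting the index to $\mathcal{K}$; the result is $T_h=0$ (odd $N$) or $T_h=1/2$ (even $N$) when $N\mid h$, and $T_h=\frac{(-1)^h}{2i\sin(\pi h/N)}$ (odd $N$) or $T_h=\frac{e^{\pi ih/N}(-1)^h}{2i\sin(\pi h/N)}$ (even $N$) otherwise.

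Writing $h=Nm+j$ with $1\leq j\leq N-1$ and applying $\sin(\pi(Nm+j)/N)=(-1)^m\sin(\pi j/N)$ together with $(-1)^{Nm}=(-1)^m$ for $N$ odd and $(-1)^{Nm}=1$ for $N$ even, the factors of $(-1)^m$ cancel cleanly and one recovers the finite sums $\sum_{j=1}^{N-1}\frac{(-1)^j}{\sin(\pi j/N)}\sum_m\frac{z^{2Nm+2j}}{(2Nm+2j)!}$ in the odd case and its even analogue. In the even case the leftover phase $e^{\pi ih/N}=(-1)^m e^{\pi ij/N}$ produces the extra factor $e^{\pi ij/N}=(-1)^{j/N}$ on the right, and the non-zero value $T_h=1/2$ at $h\equiv0\pmod N$ contributes $-\frac{\pi i}{2N}\sum_{h=0}^{\infty}\frac{z^{2Nh}}{(2Nh)!}$, which is exactly the quantity absorbed into the $\log z$ term on the left-hand side of \eqref{Generalized Dixon-Ferror identity even}. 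The main difficulty is bookkeeping rather than conceptual: carefully tracking the parity-dependent signs in each case, and ensuring the asymmetric residue system for $N$ even is handled consistently so that the constant part of $T_h$ at the fiber $h\equiv0\pmod N$ lines up with the $-\pi i/(2N)$ shift claimed inside the logarithm.
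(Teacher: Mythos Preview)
Your proposal is correct and follows essentially the same approach as the paper: both proofs apply a roots-of-unity filter to the identity \eqref{2equiv1} (after invoking its analytic continuation from Remark~\ref{extension}) over the residue system $\mathcal{K}$, use orthogonality to isolate the subseries at multiples of $N$, and evaluate the twisted sum $\sum_{k\in\mathcal{K}}k\,e^{2\pi ikh/N}$ exactly as in \eqref{Orthogonal character} and \eqref{Orthogonal character even}. The only difference is organizational---the paper first extracts the $\psi$-series and then handles the $\log(ze^{\pi ik/N})\cosh(ze^{\pi ik/N})$ contribution separately via Taylor expansion, whereas you keep the full combination $\psi(2h+1)-\log z-\pi ik/N$ together before averaging---but the underlying computation is identical.
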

	\begin{proof}
		We first prove (a) where $N$ is odd. Observe that
		\begin{equation}\label{The main series}
			\sum_{h=0}^{\infty} \frac{\psi(2Nh+1)}{\Gamma(2Nh+1)}z^{2Nh} = \sum_{\substack{h=0\\N\mid h}}^{\infty} \frac{\psi(2h+1)}{\Gamma(2h+1)}z^{2h}.
		\end{equation}
	Also,
	\begin{align*}
		\sum_{k=-\frac{(N-1)}{2}}^{\frac{N-1}{2}}e^{\frac{2\pi i kh}{N}}=\sum_{k=0}^{\frac{N-1}{2}}e^{\frac{2\pi i kh}{N}}+\sum_{k=-\frac{(N-1)}{2}}^{-1}e^{\frac{2\pi i kh}{N}}=\sum_{k=0}^{\frac{N-1}{2}}e^{\frac{2\pi i kh}{N}}+\sum_{\ell=\frac{N+1}{2}}^{N-1}e^{\frac{2\pi i (\ell-N)h}{N}}=\sum_{k=0}^{N-1}e^{\frac{2\pi i kh}{N}},
	\end{align*}
		so that from the orthogonality of characters (cf. \cite[p.~158, Theorem 8.1]{Apostol}),
			\begin{equation}\label{Orthogonal character0}
			\sum_{k=-\frac{(N-1)}{2}}^{\frac{N-1}{2}} e^{\frac{2 \pi i k h}{N}} = \begin{cases}
				N & \text{ for } N\mid h\\
				0 &  \text{ for } N\nmid h.
			\end{cases}
		\end{equation}
	Hence from \eqref{The main series} and \eqref{Orthogonal character0},
		\begin{align*}
			\sum_{h=0}^{\infty} \frac{\psi(2Nh+1)}{\Gamma(2Nh+1)}z^{2Nh} = \frac{1}{N} \sum_{h=0}^{\infty} \frac{\psi(2h+1)}{\Gamma(2h+1)}z^{2h} \sum_{k=-\frac{(N-1)}{2}}^{\frac{N-1}{2}} e^{\frac{2\pi i kh}{N}} = \frac{1}{N}\sum_{k=-\frac{(N-1)}{2}}^{\frac{N-1}{2}} \sum_{h=0}^{\infty} \frac{\psi(2h+1)}{\Gamma(2h+1)}\left(ze^{\frac{\pi i k}{N}}\right)^{2h}.
		\end{align*} 
	 Since $|\arg(ze^{\frac{\pi i k}{N}})|<\pi$,	invoking \eqref{2equiv1}, we have
		\begin{align}\label{Generalized Dixon-Ferror LHS}
			\sum_{h=0}^{\infty} \frac{\psi(2Nh+1)}{\Gamma(2Nh+1)}z^{2Nh} &= \frac{1}{N}\sum_{k=-\frac{(N-1)}{2}}^{\frac{N-1}{2}} \left(\sinh\operatorname{Shi}-\cosh\operatorname{Chi}\right) \left(z e^{\frac{\pi i k}{N}} \right)+\frac{1}{N}\sum_{k=-\frac{(N-1)}{2}}^{\frac{N-1}{2}} \log \left(z e^{\frac{\pi i k}{N}} \right)\cosh\left(z e^{\frac{\pi i k}{N}} \right).
		%	&:= \frac{1}{N}(A_1 + A_2) 
		\end{align}
	Let $A$ denote the second finite sum in the above equation. 
		The series expansion of $\cosh\left(z e^{\frac{\pi i k}{N}} \right)$ yields
		\begin{align}\label{Sum T2}
			A &= \sum_{k=-\frac{(N-1)}{2}}^{\frac{N-1}{2}} \log \left(z e^{\frac{\pi i k}{N}} \right) \sum_{\ell=0}^{\infty} \frac{\left(z e^{\frac{\pi i k}{N}} \right)^{2\ell}}{(2\ell)!} \nonumber\\
			&= \sum_{\ell=0}^{\infty} \frac{z^{2\ell}}{(2\ell)!} \sum_{k=-\frac{(N-1)}{2}}^{\frac{N-1}{2}} e^{\frac{2 \pi i k \ell}{N}} \log \left(z e^{\frac{\pi i k}{N}} \right) \nonumber\\
			&= \log z \sum_{\ell=0}^{\infty} \frac{z^{2\ell}}{(2\ell)!} \sum_{k=-\frac{(N-1)}{2}}^{\frac{N-1}{2}} e^{\frac{2 \pi i k \ell}{N}} + \frac{i\pi}{N} \sum_{\ell=0}^{\infty} \frac{z^{2\ell}}{(2\ell)!} \sum_{k=-\frac{(N-1)}{2}}^{\frac{N-1}{2}} k e^{\frac{2 \pi i k \ell}{N}}
		\end{align}
	Similar to \eqref{Orthogonal character0}, one can derive
		\begin{equation}\label{Orthogonal character}
			\sum_{k=-\frac{(N-1)}{2}}^{\frac{N-1}{2}}k e^{\frac{2 \pi i k \ell}{N}} = \begin{cases}
				0 & \text{ for } N\mid \ell\\
				\frac{(-1)^{\ell} N}{e^{\frac{\pi i \ell}{N}}-e^{-\frac{\pi i \ell}{N}}}  &  \text{ for } N\nmid \ell.
			\end{cases}
		\end{equation}
		Therefore, inserting \eqref{Orthogonal character0} and \eqref{Orthogonal character} into \eqref{Sum T2}, we obtain
		\begin{align}\label{Evaluation of T2}
			A&= N \log z \sum_{\substack{\ell=0\\N\mid \ell}}^{\infty} \frac{z^{2\ell}}{(2\ell)!} + i\pi \sum_{\substack{\ell=0\\N\nmid \ell}}^{\infty} \frac{z^{2\ell}}{(2\ell)!} \frac{(-1)^{\ell}}{\left(e^{\frac{\pi i \ell}{N}}-e^{-\frac{\pi i \ell}{N}}\right)}\nonumber\\
			&= N \log z \sum_{h=0}^{\infty} \frac{z^{2Nh}}{(2Nh)!} + i\pi \sum_{j=1}^{N-1}\sum_{h=0}^{\infty} \frac{z^{2Nh+2j}}{(2Nh+2j)!} \frac{(-1)^{Nh+j}}{\left(e^{\frac{\pi i (Nh+j)}{N}}-e^{-\frac{\pi i (Nh+j)}{N}}\right)}\nonumber\\
		%	&= N \log z \sum_{h=0}^{\infty} \frac{z^{2Nh}}{(2Nh)!} + i\pi \sum_{j=1}^{N-1} \frac{(-1)^j}{\left(e^{\frac{\pi i j}{N}}-e^{-\frac{\pi i j}{N}}\right)} \sum_{h=0}^{\infty} \frac{z^{2Nh+2j}}{(2Nh+2j)!} \nonumber\\
			&= N \log z \sum_{h=0}^{\infty} \frac{z^{2Nh}}{(2Nh)!} + \frac{\pi}{2} \sum_{j=1}^{N-1} \frac{(-1)^j}{\sin\left(\frac{\pi j}{N} \right)} \sum_{h=0}^{\infty} \frac{z^{2Nh+2j}}{(2Nh+2j)!},
		\end{align}
		where in the second step , we let $\ell=Nh$ in the first sum and $\ell=Nh+j, $ $1\leq j\leq N-1$ in the second sum . Finally we substitute \eqref{Evaluation of T2} in \eqref{Generalized Dixon-Ferror LHS} to arrive at \eqref{Generalized Dixon-Ferror identity odd}.
		
		We now prove (b) where $N$ is even. Applying \cite[p.~158, Theorem 8.1]{Apostol}, that is,
			\begin{equation*}
			\sum_{k=-\frac{N}{2}+1}^{\frac{N}{2}} e^{\frac{2 \pi i k h}{N}} = \begin{cases}
				N & \text{ for } N\mid h\\
				0 &  \text{ for } N\nmid h,
			\end{cases}
		\end{equation*}
	we have
		\begin{align*}
			\sum_{h=0}^{\infty} \frac{\psi(2Nh+1)}{\Gamma(2Nh+1)}z^{2Nh} = \frac{1}{N} \sum_{h=0}^{\infty} \frac{\psi(2h+1)}{\Gamma(2h+1)}z^{2h} \sum_{k=-\frac{N}{2}+1}^{\frac{N}{2}} e^{\frac{2\pi i kh}{N}} = \frac{1}{N}\sum_{k=-\frac{N}{2}+1}^{\frac{N}{2}}  \sum_{h=0}^{\infty} \frac{\psi(2h+1)}{\Gamma(2h+1)}(ze^{\frac{\pi i k}{N}})^{2h}.
		\end{align*}
		Substitute \eqref{2equiv1} in the above equation to get
		\begin{align}\label{Generalized Dixon-Ferror LHS even}
			\sum_{h=0}^{\infty} \frac{\psi(2Nh+1)}{\Gamma(2Nh+1)}z^{2Nh} &= \frac{1}{N}\sum_{k=-\frac{N}{2}+1}^{\frac{N}{2}} \left(\sinh\operatorname{Shi}-\cosh\operatorname{Chi}\right) \left(z e^{\frac{\pi i k}{N}} \right)+\frac{1}{N}\sum_{k=-\frac{N}{2}+1}^{\frac{N}{2}} \log \left(z e^{\frac{\pi i k}{N}} \right)\cosh\left(z e^{\frac{\pi i k}{N}} \right).
		\end{align}
		Let $B$ denote the second finite sum in the above equation. From the Taylor expansion of $\cosh\left(z e^{\frac{\pi i k}{N}} \right)$, we have
		\begin{align}\label{Sum T2 even}
			B = \sum_{k=-\frac{N}{2}+1}^{\frac{N}{2}} \log \left(z e^{\frac{\pi i k}{N}} \right) \sum_{\ell=0}^{\infty} \frac{\left(z e^{\frac{\pi i k}{N}} \right)^{2\ell}}{(2\ell)!} 
%			&= \sum_{\ell=0}^{\infty} \frac{z^{2\ell}}{(2\ell)!} \sum_{k=-\frac{N}{2}+1}^{\frac{N}{2}} e^{\frac{2 \pi i k \ell}{N}} \log \left(z e^{\frac{\pi i k}{N}} \right) \nonumber\\
			&= \log z \sum_{\ell=0}^{\infty} \frac{z^{2\ell}}{(2\ell)!} \sum_{k=-\frac{N}{2}+1}^{\frac{N}{2}} e^{\frac{2 \pi i k \ell}{N}} + \frac{i\pi}{N} \sum_{\ell=0}^{\infty} \frac{z^{2\ell}}{(2\ell)!} \sum_{k=-\frac{N}{2}+1}^{\frac{N}{2}} k e^{\frac{2 \pi i k \ell}{N}}.
		\end{align}
		Again, it is not difficult to see that
		\begin{equation}\label{Orthogonal character even}
			\sum_{k=-\frac{N}{2}+1}^{\frac{N}{2}}k e^{\frac{2 \pi i k \ell}{N}} = \begin{cases}
				\frac{N}{2} & \text{ for } N\mid \ell\\
				\frac{(-1)^{\ell}N}{1-e^{-\frac{2\pi i \ell}{N}}}  &  \text{ for } N\nmid \ell.
			\end{cases}
		\end{equation}
		Hence from \eqref{Sum T2 even} and \eqref{Orthogonal character even},
		\begin{align}\label{Evaluation of T2 even}
			B &= N \log z \sum_{\substack{\ell=0\\N\mid \ell}}^{\infty} \frac{z^{2\ell}}{(2\ell)!} + i\pi\left[ \frac{1}{2}\sum_{\substack{\ell=0\\N\mid \ell}}^{\infty} \frac{z^{2\ell}}{(2\ell)!}  +\sum_{\substack{\ell=0\\N\nmid \ell}}^{\infty} \frac{z^{2\ell}}{(2\ell)!} \frac{(-1)^{\ell}}{1-e^{-\frac{2\pi i \ell}{N}}}\right]\nonumber\\
			&= \left(N \log z+\frac{i\pi}{2}\right) \sum_{h=0}^{\infty} \frac{z^{2Nh}}{(2Nh)!} + i\pi \sum_{j=1}^{N-1}\sum_{h=0}^{\infty} \frac{z^{2Nh+2j}}{(2Nh+2j)!} \frac{(-1)^j}{1-e^{-\frac{2\pi i j}{N}}}\nonumber\\
		%	&= \left(N \log z+\frac{i\pi}{2}\right) \sum_{h=0}^{\infty} \frac{z^{2Nh}}{(2Nh)!} + i\pi \sum_{j=1}^{N-1}\frac{(-1)^j}{1-e^{-\frac{2\pi i j}{N}}} \sum_{h=0}^{\infty} \frac{z^{2Nh+2j}}{(2Nh+2j)!}\nonumber\\
			&=\left(N \log z+\frac{i\pi}{2}\right) \sum_{h=0}^{\infty} \frac{z^{2Nh}}{(2Nh)!} + \frac{\pi}{2}\sum_{j=1}^{N-1} \frac{(-1)^{j+\frac{j}{N}}}{\sin \left(\frac{\pi j}{N} \right)} \sum_{h=0}^{\infty} \frac{z^{2Nh+2j}}{(2Nh+2j)!}.
		\end{align}
	%	where in the second step in the first and the second sum we made the change of variable $\ell$ by $Nh$ and in the third sum $\ell$ by $Nh+j$ with $0\leq j\leq N-1$.
		 Substituting \eqref{Evaluation of T2 even} in \eqref{Generalized Dixon-Ferror LHS even} yields \eqref{Generalized Dixon-Ferror identity even} upon simplification.
	\end{proof}
	
	\section{Special cases of Theorem \ref{Analytic continuation} for $N$ odd}\label{scaco}
	
	We begin this section with the special case $a=-2Nm-N$ of Theorem \ref{Analytic continuation}.

	\subsection{A new generalization of Ramanujan's formula for $\zeta(2m+1)$}\label{a=-2Nm-N}

	\begin{proof}[Theorem \textup{\ref{zetagen3}}][]
	We use without mention the fact that $N$ is an odd positive integer. Letting $a\to-2Nm-N,$ where $ m\in\mathbb{Z}\backslash\{0\},$ in Theorem \ref{Analytic continuation}, we get
	\begin{align}\label{Eqn:Analytic continuationnew3}
		&\sum_{n=1}^\infty \sigma_{-2Nm-N}^{(N)}(n) e^{-ny}  + \frac{1}{2}\zeta(2Nm+N) - \frac{1}{y}\zeta(2Nm+2N) - \frac{y^{2m+1-{1\over N}}}{N} \Gamma \left(-2m-1+{1\over N}\right) \zeta \left(-2m-1+{1\over N}\right)\nonumber\\
		&= \tfrac{2 (2\pi)^{\frac{1}{N}-\frac{1}{2}} N^{\frac{-2Nm-N-1}{2}}}{y^{\frac{1}{N}-m-\frac{1}{2}}} \sum_{n=1}^\infty \tfrac{S_{-2Nm-N}^{(N)}(n)}{n^{-m-\frac{1}{2}}} \bigg[{}_{\frac{1}{2}} K_{-m-\frac{1}{2}}^{(N)}\left(\substack{\tfrac{4\pi^{N+1}n}{yN^N}, 0}\right) - \tfrac{2^{\frac{1}{2} + \frac{1-2Nm-N}{N}} \pi^{{(1-N)(-m-\frac{1}{2})} -N}} {2^{N-1}\left(\tfrac{4\pi^{N+1}n}{yN^N}\right)^{\frac{1}{N}-m+\frac{1}{2}}} \nonumber \\
		&\quad\times \lim\limits_{a\to -2Nm-N}{\sin\(\frac{\pi}{2}(N-a)\)}C_{m,N}\left(\substack{\tfrac{1}{2}, \tfrac{a}{2N}, 0, \tfrac{4\pi^{N+1}n}{yN^N}}\right)\bigg]+  \frac{y}{2\pi^{2}} \sum_{k=0}^m \left(-\frac{y^2}{4\pi^2}\right)^k   \zeta(2Nm-2kN)\zeta(2k+2).
	\end{align}
	
	Let $Z= \frac{4\pi^{2N+2}n^2}{y^2N^{2N}}=\frac{1}{4}\left(\frac{4\pi^{N+1}n}{yN^N}\right)^2$. We first obtain an elementary expression for ${}_{\frac{1}{2}} K_{-m-\frac{1}{2}}^{(N)}\left(\substack{\tfrac{4\pi^{N+1}n}{yN^N}, 0}\right)$. To that end, we employ \eqref{muknug} in the first step below and then invoke Theorem \ref{meijergsim} in the second step to obtain
	
	\begin{align*}
		{}_{\frac{1}{2}} K_{-m-\frac{1}{2}}^{(N)}\left(\substack{\tfrac{4\pi^{N+1}n}{yN^N}, 0}\right)
		%&- \frac{2^{\frac{1}{2} + \frac{a+1}{N}} \pi^{\frac{(1-N)a}{2N} -N}} {\left(\frac{4\pi^{N+1}n}{yN^N}\right)^{1+\frac{1}{N}+\frac{a}{2N}}}   \frac{\sin\(\frac{\pi}{2}(N-a)\right)}{2^{N-1}}C_{m,N}\(\frac{1}{2}, \frac{a}{2N}, 0, \frac{4\pi^{N+1}n}{yN^N}\right)
		&=\frac{(2\pi)^{-2m-1-2/N}}{\pi^2 \sqrt{2}}\left(\frac{n}{yN^N}\right)^{{-m-\frac{1}{2}}-{2\over N}} G_{1, \, \, 2N+1}^{N+1, \, \, 1} \left(\begin{matrix}
			\frac{1}{2} + \frac{1+2Nm+N}{2N}\\
			\langle \frac{i}{N}\rangle, \frac{1}{2} + \frac{1+2Nm+N}{2N}; \langle 1+ \frac{3}{2N}-{i\over N} \rangle
		\end{matrix} \Bigg | Z \right)\nonumber\\
		&=\frac{(2\pi)^{-2m-1-2/N}}{\pi^2 \sqrt{2}}\left(\frac{n}{yN^N}\right)^{{-m-\frac{1}{2}}-{2\over N}} \Bigg\{\frac{ N^{2Nm+2N - \frac{1}{2}} \Gamma\left(\frac{1-2Nm-2N}{2}\right)Z^{1+m+\frac{1}{2N}} }{\Gamma\left(Nm+N\right)}\nonumber \\
		&\qquad\qquad\qquad\qquad\qquad\qquad \times		{}_1F_{2N}\left( \left. \begin{matrix}
			1  \\ \left\langle{1}+m+{i-1 \over 2N}\right\rangle_{i=1}^{ 2N}
		\end{matrix}\right| Z\right)+ A_{-2Nm-N, N}\left(Z\right)\Bigg\},
	\end{align*}
	where $A_{a, N}(z)$ is defined in \eqref{aanzodd}. 
	
	Employing \eqref{prudnikovid}, we simplify the resulting ${}_1F_{2N}$ in it as
	\begin{align*}
		{}_1F_{2N}\left( \left. \begin{matrix}
			1  \\ \left\langle{1}+m+{i-1 \over 2N}\right\rangle_{i=1}^{ 2N}
		\end{matrix}\right| Z\right)
		&=\frac{(2N(m+1)-1)!}{(2N)^{2N(m+1)}Z^{m+1-{1\over 2N}} }\Bigg\{ \sum_{k=0}^{2N-1}\frac{\exp \left(2 N e^{\pi i k \over N}Z^{1\over 2N}\right) }{e^{\pi i k (2m+2-{1\over N})}}\\
		&\quad -(2N)^{2N(m+1)}\sum_{k=1}^{m}\frac{Z ^{m+1-{1\over 2N}-k}}{(2N)^{2Nk}(2N(m+1)-1-2Nk)!}\Bigg\},
	\end{align*}
	thereby obtaining
	\begin{align}\label{ksimp}
		{}_{\frac{1}{2}} K_{-m-\frac{1}{2}}^{(N)}\left(\tfrac{4\pi^{N+1}n}{yN^N}, 0\right) 
%		&=\frac{\left(\frac{n}{yN^{N}}\right)^{-m-\frac{1}{2}-{2\over N}}}{\pi^2 \sqrt{2}(2\pi)^{2m+1+\frac{2}{N}}}\left[A_{-2Nm-N, N}\left(\frac{4\pi^{2N+2}n^2}{y^2N^{2N}}\right)+\frac{ N^{N+2Nm+N - \frac{1}{2}} \Gamma\left(\frac{1}{2}-Nm-N\right)\left(\frac{4\pi^{2N+2}n^2}{y^2N^{2N}}\right)^{1+\frac{1}{2N}+m} }{\Gamma\left({Nm+N}\right)(2N)^{2N(m+1)}}\right.\nonumber\\
%		&\times\frac{(2N(m+1)-1)!}{ \left(\frac{4\pi^{2N+2}n^2}{y^2N^{2N}} \right)^{m+1-{1\over 2N}} }\left.\Bigg\{ \sum_{k=0}^{2N-1}\frac{\exp \left(2 N e^{\pi i k \over N}\left(\frac{4\pi^{2N+2}n^2}{y^2N^{2N}} \right)^{1\over 2N}\right) }{e^{\pi i k (2m+2-{1\over N})}} -\sum_{k=1}^{m}\frac{(2N)^{2N(m+1)}\left(\frac{4\pi^{2N+2}n^2}{y^2N^{2N}} \right)^{m+1-{1\over 2N}-k}}{(2N)^{2Nk}(2N(m+1)-1-2Nk)!}\Bigg\}\right]\nonumber \\
		&=\frac{\left(\frac{n}{yN^{N}}\right)^{-m-\frac{1}{2}-{2\over N}}}{\pi^2 \sqrt{2}(2\pi)^{2m+1+\frac{2}{N}}}\left[A_{-2Nm-N, N}\left(Z\right)+\frac{(-1)^{m+1}\sqrt{\pi}}{2\sqrt{N}}Z^{{1\over N}}\right.\nonumber \\
		&\quad\times\left.\Bigg\{ \sum_{k=0}^{2N-1}\frac{\exp \left(2 N e^{\pi i k \over N}Z^{1\over 2N}\right) }{e^{\pi i k (2m+2-{1\over N})}} -\sum_{k=1}^{m}\frac{(2N)^{2N(m+1)}Z^{m+1-{1\over 2N}-k}}{(2N)^{2Nk}(2N(m+1)-1-2Nk)!}\Bigg\}\right].
	\end{align}
	%\frac{(2\pi)^{a-2\over N}}{\pi^2 \sqrt{2}}\frac{(n/y)^{{a\over 2N}-{2\over N}}}{N^{{a\over 2}-2}}
	We next consider the first two expressions in the square bracket of \eqref{ksimp}. Using \eqref{aanzodd}, we observe that
	\begin{align*}
		&A_{-2Nm-N, N}\left(Z\right)+\frac{(-1)^{m+1}\sqrt{\pi}}{2\sqrt{N}}\left(Z\right)^{{1\over N}} \sum_{k=0}^{2N-1}\frac{\exp \left(2 N e^{\pi i k \over N}Z^{1\over 2N}\right) }{e^{\pi i k (2m+2-{1\over N})}}\nonumber \\
		&=\frac{(-1)^{m+1}\sqrt{\pi}}{2\sqrt{N}}Z^{{1\over N}}\left\{-\left[\sum_{k=0}^{\frac{N-1}{2}}-\sum_{k=\frac{N+1}{2}}^{\frac{3N-1}{2}}+\sum_{k=\frac{3N-1}{2}}^{2N-1}\right]{e^{\pi i k \over N}}{\exp \left(2 N e^{\pi i k \over N}Z^{1\over 2N}\right)}+\sum_{k=0}^{2N-1}\frac{\exp \left(2 N e^{\pi i k \over N}Z^{1\over 2N}\right) }{e^{\pi i k (2m+2-{1\over N})}}\right\}\nonumber\\
			&=\frac{(-1)^{m+1}\sqrt{\pi}}{\sqrt{N}}Z^{{1\over N}}\sum_{k=\frac{N+1}{2}}^{\frac{3N-1}{2}}{e^{\pi i k \over N}}{\exp \left(2 N e^{\pi i k \over N}Z^{1\over 2N}\right) }.
		\end{align*}

	Substituting the above expression in \eqref{ksimp}, we find that
	\begin{align}\label{beforecancel}
		{}_{\frac{1}{2}} K_{-m-\frac{1}{2}}^{(N)}\left(\tfrac{4\pi^{N+1}n}{yN^N}, 0\right) 
		&=(-1)^{m+1}\frac{N^{Nm+{N\over 2}+{3\over 2}}Z^{1\over N}}{\pi(2\pi)^{2m+{3\over 2}+{2\over N}}(n/y)^{m+{1\over 2}+{2\over N}}}\sum_{k=\frac{N+1}{2}}^{\frac{3N-1}{2}}{e^{\pi i k \over N}}{\exp \left(2 N e^{\pi i k \over N}Z^{1\over 2N}\right) }\nonumber\\
		&\quad+(-1)^{m}\frac{2^{2Nm+2N}N^{3Nm+{5N\over 2}+{3\over 2}}Z^{{1\over N}+m+1-{1\over 2N}}}{(2\pi)^{2m+{5\over 2}+{2\over N}}(n/y)^{m+{1\over 2}+{2\over N}}}\sum_{k=1}^{m}\frac{\left(\frac{y^2}{(2\pi)^{2N+2}n^2}\right)^{k}}{(2Nm-2Nk+2N-1)!}.
	\end{align}
	%\begin{discussion}
	%	[MENTION THAT WE GET LEMMA 10.1 OF DKK FROM THE ABOVE RESULT BY LETTING $N=1$.]
	%\end{discussion}
 Now let $L$ denote the limit in the second expression inside the square brackets on the right-hand side of \eqref{Eqn:Analytic continuationnew3}. 
% upon letting $a\to-2Nm-N$. This gives
%	\begin{align}
%		&\lim_{a\to-2Nm-N}\frac{2^{\frac{1}{2} + \frac{a+1}{N}} \pi^{\frac{(1-N)a}{2N} -N}} {\left(\frac{4\pi^{N+1}n}{yN^N}\right)^{1+\frac{1}{N}+\frac{a}{2N}}}   \frac{\sin\(\frac{\pi}{2}(N-a)\)}{2^{N-1}}C_{m,N}\left(\frac{1}{2}, \frac{a}{2N}, 0, \frac{4\pi^{N+1}n}{yN^N}\right)\nonumber\\
%		&=2^{-2m-N+\frac{1}{2}+{1\over N}}\pi^{\frac{(N-1)(2m+1)}{2}-N}\left(\frac{4\pi^{N+1}n}{yN^N}\right)^{m-{1\over 2}-\frac{1}{N}}\cdot L,
%	\end{align}
%	where
%	\begin{align}
%		L:=\lim_{a\to-2Nm-N}\sin\left(\frac{\pi}{2}(N-a)\right)C_{m,N}\left(\frac{1}{2}, \frac{a}{2N}, 0, \frac{4\pi^{N+1}n}{yN^N}\right).
%	\end{align}
We evaluate this limit. Using the fact $\sin\left(\frac{\pi}{2}(N-a)\right)=(-1)^{\frac{N-1}{2}}\cos\left(\frac{\pi a}{2}\right)$, \eqref{cmn} and \eqref{gmf} in the first step below, and the fact $\cos\left(\frac{\pi a}{2}\right)=(-1)^{k+\frac{N+1}{2}}\cos\left(\frac{\pi }{2}(N+a+1+2Nk)\right)$ in the second step below, we have
	\begin{align*}
		L&=    (-1)^{\frac{N+1}{2}}\left(2\pi\right)^{N-\frac{1}{2}}\sum_{k=0}^{m}{(2N)^{2Nm-2Nk-{1\over 2}}}\left(
		\frac{2\pi^{N+1}n}{yN^{N}}\right)^{-2k}\lim_{a \to -2Nm-N}{\cos\left(\frac{\pi a}{2}\right)\Gamma(N+a+1+2Nk)} \nonumber\\
		&= \left(2\pi\right)^{N-\frac{1}{2}} \sum_{k=0}^{m-1}\frac{(-1)^{k}}{(2N)^{2Nk-2Nm+{1\over 2}}}\left(
		\frac{2\pi^{N+1}n}{yN^{N}}\right)^{-2k}\nonumber\\
		&\quad\times\lim_{N+a+1+2Nk\to-(2\ell+1)}\cos\left(\frac{\pi}{2}(N+a+1+2Nk)\right)\Gamma(N+a+1+2Nk)
	\end{align*}
	since the $k=m$ term vanishes. Here, $\ell=Nm-Nk-1$. Next, we employ \eqref{usefullim} with $a$ replaced by $N+a+1+2Nk$ in the above equation and then replace $k$ by $k-1$ so as to arrive at
	\begin{align*}
		L=  (-1)^{m}\left(2\pi\right)^{N-\frac{1}{2}} \left(\frac{\pi}{2}\right)(2N)^{2Nm+2N-{1\over 2}}\left(\frac{2\pi^{N+1}n}{yN^N}\right)^{2}\sum_{k=1}^{m}\frac{\left(\frac{y^2}{(2\pi)^{2N+2}n^2}\right)^{k}}{(2Nm-2Nk+2N-1)!},
	\end{align*}
	whence, upon simplification, we have
\begin{align}\label{cancel}
	&\lim_{a\to-2Nm-N}\frac{2^{\frac{1}{2} + \frac{a+1}{N}} \pi^{\frac{(1-N)a}{2N} -N}} {\left(\frac{4\pi^{N+1}n}{yN^N}\right)^{1+\frac{1}{N}+\frac{a}{2N}}}   \frac{\sin\(\frac{\pi}{2}(N-a)\)}{2^{N-1}}C_{m,N}\left(\frac{1}{2}, \frac{a}{2N}, 0, \frac{4\pi^{N+1}n}{yN^N}\right)\nonumber\\    
	&=(-1)^{m}\pi(2\pi)^{2Nm+2N-{1\over N}-{1\over 2}}\left({n\over y}\right)^{m-{1\over N}+{3\over 2}}N^{Nm+{N\over 2} +{1\over 2}}\sum_{k=1}^{m}\frac{\left(\frac{y^2}{(2\pi)^{2N+2}n^2}\right)^{k}}{(2Nm-2Nk+2N-1)!}.
\end{align}
	Note that the second expression in \eqref{beforecancel} completely cancels out with the expression in \eqref{cancel}. Therefore, the expression in square brackets on the right-hand side of \eqref{Eqn:Analytic continuationnew3} simplifies in the limit to
	\begin{align*}
		&\lim_{a\to-2Nm-N}\left[{}_{\frac{1}{2}} K_{\frac{a}{2N}}^{(N)}\left(\tfrac{4\pi^{N+1}n}{yN^N}, 0\right) - \frac{2^{\frac{1}{2} + \frac{a+1}{N}} \pi^{\frac{(1-N)a}{2N} -N}} {\left(\frac{4\pi^{N+1}n}{yN^N}\right)^{1+\frac{1}{N}+\frac{a}{2N}}}   \frac{\sin\(\frac{\pi}{2}(N-a)\)}{2^{N-1}}C_{m,N}\left(\frac{1}{2}, \frac{a}{2N}, 0, \frac{4\pi^{N+1}n}{yN^N}\right)\right]\nonumber\\
		&\qquad =(-1)^{m+1}\frac{N^{Nm+{N\over 2}+{3\over 2}}}{\pi(2\pi)^{2m+{3\over 2}+{2\over N}}(n/y)^{m+{1\over 2}+{2\over N}}}\left(\frac{4\pi^{2N+2}n^2}{y^2N^{2N}} \right)^{{1\over N}}\sum_{k=\frac{N+1}{2}}^{\frac{3N-1}{2}}{e^{\pi i k \over N}}{\exp \left(2 N e^{\pi i k \over N}\left(\frac{4\pi^{2N+2}n^2}{y^2N^{2N}} \right)^{1\over 2N}\right) }\nonumber\\
		&\qquad=(-1)^{m+1}\sqrt{{\pi \over 2}}\left(4\pi^2 n\over y \right)^{-m-{1\over 2}} N^{Nm+{N\over 2}-{1\over 2}}\sum_{k=\frac{N+1}{2}}^{\frac{3N-1}{2}}{e^{\pi i k \over N}}{\exp \left(2 \pi e^{\pi i k \over N}\left(\frac{2\pi n}{y} \right)^{1\over N}\right) }.
	\end{align*}
	Using the above equation along with the definition of $S_{a}^{(N)}(n)$ in \eqref{defbf}, and interchanging the order of limit and summation, which is justified since the summand of the series on the right hand side of \eqref{Eqn:Analytic continuation} is $\mathcal{O}\left(n^{\max \left(-\frac{\re(a)}{2N}, \frac{\re(a)}{2N} + \frac{1}{N} - 1 \right)-2m-3-\frac{1}{N}-\frac{\re(a)}{2N}+\epsilon} \right)$ for any $\epsilon>0$, we get
	\begin{align}\label{fourthterm}
		&\tfrac{2 (2\pi)^{\frac{1}{N}-\frac{1}{2}} N^{\frac{-2Nm-N-1}{2}}}{y^{\frac{1}{N}-m-\frac{1}{2}}} \sum_{n=1}^\infty \tfrac{S_{-2Nm-N}^{(N)}(n)}{n^{-m-\frac{1}{2}}} \bigg[{}_{\frac{1}{2}} K_{-m-\frac{1}{2}}^{(N)}\left(\substack{\tfrac{4\pi^{N+1}n}{yN^N}, 0}\right) - \tfrac{2^{\frac{1}{2} + \frac{1-2Nm-N}{2N}} \pi^{{(1-N)(-m-\frac{1}{2})} -N}} {2^{N-1}\left(\tfrac{4\pi^{N+1}n}{yN^N}\right)^{\frac{1}{N}-m+\frac{1}{2}}} \nonumber \\
		&\quad\times \lim\limits_{a\to -2Nm-N}{\sin\(\frac{\pi}{2}(N-a)\)}C_{m,N}\left(\substack{\tfrac{1}{2}, \tfrac{a}{2N}, 0, \tfrac{4\pi^{N+1}n}{yN^N}}\right)\bigg]\nonumber\\
		%&\lim_{a\to-2Nm-1}\frac{2 (2\pi)^{\frac{1}{N}-\frac{1}{2}} N^{\frac{a-1}{2}}}{y^{\frac{1}{N}+\frac{a}{2N}}}\sum_{n=1}^\infty \frac{S_a^{(N)}(n)}{n^{\frac{a}{2N}}} \bigg [{}_{\frac{1}{2}} K_{\frac{a}{2N}}^{(N)}\left(\tfrac{4\pi^{N+1}n}{yN^N}, 0\right) \nonumber\\
		%&\hspace{6.8cm}- \frac{2^{\frac{1}{2} + \frac{a+1}{N}} \pi^{\frac{(1-N)a}{2N} -N}} {\left(\frac{4\pi^{N+1}n}{yN^N}\right)^{1+\frac{1}{N}+\frac{a}{2N}}}   \frac{\sin\(\frac{\pi}{2}(N-a)\)}{2^{N-1}}C_{m,N}\left(\frac{1}{2}, \frac{a}{2N}, 0, \frac{4\pi^{N+1}n}{yN^N}\right)\bigg]\nonumber\\
		&={(-1)^{m+1}\over N}\left( {y\over 2\pi}\right)^{2m-{1\over N}+1}\sum_{d_1, d_2=1}^\infty   {d_2^{-2m-2+{1\over N}}} \sum_{k={N+1\over 2}}^{k={3N-1\over 2}}{e^{\pi i k \over N}} \exp  \left\lbrace 2 \pi d_1 e^{\pi i k \over N}\left(\frac{2\pi d_2}{y} \right)^{1\over N}\right\rbrace \nonumber\\
		&={(-1)^{m}\over N}\left( {y\over 2\pi}\right)^{2m-{1\over N}+1}\sum_{d_2=1}^\infty   {d_2^{-2m-2+{1\over N}}} \sum_{j=-{(N-1)\over 2}}^{{N-1\over 2}}{e^{\pi i j \over N}} \sum_{d_1=1}^{\infty}\left\lbrace \exp \left( - 2 \pi   e^{\pi i j \over N}\left(\frac{2\pi d_2}{y} \right)^{1\over N}\right)\right\rbrace^{d_1} \nonumber\\
		&={(-1)^{m}\over N}\left( {y\over 2\pi}\right)^{2m-{1\over N}+1}\sum_{j=-{(N-1)\over 2}}^{{N-1\over 2}}{e^{\pi i j \over N}}\sum_{n=1}^\infty    \frac{{n^{-2m-2+{1\over N}}}}{\exp  \left\lbrace 2 \pi  e^{\pi i j \over N}\left(\frac{2\pi n}{y} \right)^{1\over N}\right\rbrace -1},
	\end{align}
	where in the penultimate step, we replaced $k$ by $j+N$, and in the last step we summed the inner series using the geometric series formula since ${-(N-1)\over 2}<j<{(N-1)\over 2}$ implies Re$\left( 2 \pi d_1 e^{\pi i j \over N}\left(\frac{2\pi d_2}{y} \right)^{1\over N}\right) =2 \pi d_1 \cos \left( {\pi  j \over N}\right) \left(\frac{2\pi d_2}{y} \right)^{1\over N}>0$. Finally, replacing $k$ by $k-1$ in the last expression on the right-hand side of \eqref{Eqn:Analytic continuation},
	\begin{align}\label{fifthterm}
	%	&\lim_{a\to-2Nm-N}\frac{y}{2\pi^{2}} \sum_{k=0}^m \left(-\frac{y^2}{4\pi^2}\right)^k   \zeta(-2kN-N-a)\zeta(2k+2)\nonumber\\
		\frac{y}{2\pi ^2}\sum_{k=0}^{m} (-1)^k \zeta(2Nm-2Nk)\zeta (2k+2)\left({2\pi \over y}\right)^{-2k}
		=-\frac{2}{y}\sum_{k=1}^{m+1} (-1)^k \zeta(2N(m-k+1))\zeta (2k)\left({2\pi \over y}\right)^{-2k}.
	\end{align}
	From \eqref{Eqn:Analytic continuationnew3}, \eqref{fourthterm} and \eqref{fifthterm},
	\begin{align*}
		&\sum_{n=1}^\infty \sigma_{-2Nm-N}^{(N)}(n) e^{-ny}  + \frac{1}{2}\zeta(2Nm+N) - \frac{1}{y}\zeta(2Nm+2N) - \frac{y^{2m+1-{1\over N}}}{N} \Gamma \left(-2m-1+{1\over N}\right) \zeta \left(-2m-1+{1\over N}\right)\nonumber\\
		&={(-1)^{m+1}\over N}\left( {y\over 2\pi}\right)^{2m-{1\over N}+1}\sum_{j={-(N-1)\over 2}}^{{N-1\over 2}}{e^{\pi i j \over N}}\sum_{n=1}^\infty    \frac{{n^{-2m-2+{1\over N}}}}{\exp  \left\lbrace 2 \pi  e^{\pi i j \over N}\left(\frac{2\pi n}{y} \right)^{1\over N}\right\rbrace -1}\nonumber\\
		&\quad-\frac{2}{y}\sum_{k=1}^{m+1} (-1)^k \zeta(2N(m-k+1))\zeta (2k)\left({2\pi \over y}\right)^{-2k}.
	\end{align*}
	Using \eqref{zetafe} to evaluate the limit on the left-hand side in the above equation and inducting the term $\frac{1}{y}\zeta(2Nm+2N)$ as the $k=0$ term of the finite sum on the right-hand side, we get 
	\begin{align*}
		&\sum_{n=1}^\infty \sigma_{-2Nm-N}^{(N)}(n) e^{-ny}  + \frac{1}{2}\zeta(2Nm+N) \nonumber\\
		&=\frac{(-1)^{m}}{2N\sin{\left(\pi\over 2N\right)}}\left(\frac{y}{2\pi}\right)^{2m+1-{1\over N}}\zeta\left(2m+2-{1\over N}\right) +{(-1)^{m}\over N}\left( {y\over 2\pi}\right)^{2m-{1\over N}+1}\hspace{-5mm}\sum_{j=-{(N-1)\over 2}}^{{N-1\over 2}}{e^{\pi i j \over N}}\sum_{n=1}^\infty    \frac{{n^{-2m-2+{1\over N}}}}{\exp  \left\lbrace 2 \pi  e^{\pi i j \over N}\left(\frac{2\pi n}{y} \right)^{1\over N}\right\rbrace -1}\nonumber\\
		&\quad-\frac{2}{y}\sum_{k=0}^{m+1} (-1)^k \zeta(2N(m-k+1))\zeta (2k)\left({2\pi \over y}\right)^{-2k}.
	\end{align*}
	Finally, let $\alpha=2^{-N}y$ and $\beta = 2\pi^{1+{1\over N}}y^{-\frac{1}{N}}$ so that $\alpha \beta^N =\pi^{N+1}$, multiply both sides of the above equation by $\a^{-\left( \frac{2Nm+N-1}{N+1}\right)}$ and use Euler's formula \eqref{ef} and \eqref{fkny} to arrive at \eqref{zetageneqn3}. This completes the proof.
	\end{proof}

\begin{remark}
	The identity \eqref{zetageneqn} can be obtained from Theorem \textup{\ref{Analytic continuation}} in a similar way by letting $a\to-2Nm-1$.
\end{remark}

%\hspace{3mm}
	\subsection{An identity involving the extended higher Herglotz functions}\label{idtyherglotz}
	
	Recently, the second author, Gupta and Kumar \cite{hhf1} introduced and studied the extended higher Herglotz function
	\begin{align}\label{fknx}
		\mathscr{F}_{k,N}(x):=\sum_{n=1}^{\infty}\frac{\psi(n^{N}x)-\log(n^{N}x)}{n^k}\hspace{5mm}\left(x\in\mathbb{C}\backslash(-\infty,0]\right),
	\end{align}
	where $k$ and $N$ are positive real numbers such that $k+N>1$. When $k=N=1$, this function, known as the Herglotz function \cite{zagier1975} or sometimes the Herglotz-Zagier function, plays an important role in Zagier's explicit version of the Kronecker limit formula for real quadratic fields. The case $N=1$ of \eqref{fknx} was also studied by Vlasenko and Zagier \cite{vz} and is useful in higher Kronecker ``limit'' formulas. 
	
	The identity in Corollary \ref{gencompanion} below essentially involves the extended higher Herglotz functions. It was first obtained in \cite[Theorem 3.11]{hhf1} by differentiating a two-parameter generalization of Ramanujan's formula for $\zeta(2m+1)$ \cite[Theorem 2.4]{DGKM}. Although differentiation gives the result in Corollary \ref{gencompanion} in a fairly straightforward manner, it must be noted that Theorem 2.4 from \cite{DGKM} itself is not easy to derive and requires quite a few technical results towards its proof.
	
	In what follows, we derive the result in Corollary \ref{gencompanion} as a special case of Theorem \ref{Analytic continuation}. This demonstrates the power of Theorem \ref{Analytic continuation} in that results of varied nature such as Theorem \ref{zetagen3}, Corollary \ref{gencompanion}, Theorem \ref{thm_resultt} etc. all can be brought under one roof.
	
	\begin{corollary}\label{gencompanion}
		Let $N$ be an odd positive integer. For $\a \b^N= \pi^{N+1}$ with $\textup{Re}(\a)>0, ~\textup{Re}(\b) >0$, and $m \geq 1$,
		\begin{align*}
			&\a^{-\left(\frac{2Nm}{N+1}-\frac{1}{2}\right)}\left(\frac{1}{2}\zeta(2Nm+1-N)+\sum_{n=1}^{\infty}\frac{n^{-2Nm-1+N}}{e^{(2n)^N\a}-1} \right)-\sum_{j=0}^{m-1}\frac{B_{2j}~\zeta(2Nm+1-2Nj)}{(2j)!}2^{N(2j-1)}\a^{2j-\frac{2Nm}{N+1}-\frac{1}{2}}\nonumber\\
			&=\frac{2^{2m(N-1)}}{N\pi^{\frac{N+1}{2}}}(-1)^{m+1}\b^{-\left(\frac{2Nm}{N+1}-\frac{N}{2}\right)}\Bigg[\frac{N\gamma}{2^{N-1}}\zeta(2m)+\frac{1}{2^N}\sum_{j=-\frac{(N-1)}{2}}^{\frac{(N-1)}{2}}\sum_{n=1}^{\infty}\frac{1}{n^{2m}}\left(\substack{\psi\left( \frac{i \b}{2\pi}(2n)^{1/N}e^{\frac{i\pi j}{N}}\right)+\psi\left( -\frac{i \b}{2\pi}(2n)^{1/N}e^{\frac{i\pi j}{N}}\right) }\right) \Bigg].
		\end{align*}
	\end{corollary}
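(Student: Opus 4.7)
The plan is to specialize Theorem~\ref{Analytic continuation} to $a=N-1-2Nm$, taking the truncation index there to be $m-1$ (admissible since $N-1-2Nm>-2mN-1$). The substitution $y=2^{N}\alpha$ converts the Lambert series on the left of \eqref{Eqn:Analytic continuation} into $\sum_{n=1}^{\infty}n^{N-1-2Nm}/(e^{(2n)^{N}\alpha}-1)$, matching the series in Corollary~\ref{gencompanion}, and sends $\zeta(-a)$ to $\zeta(2Nm+1-N)$. After multiplying through by $\alpha^{-(2Nm/(N+1)-1/2)}$ and using $\alpha\beta^{N}=\pi^{N+1}$ to rewrite the $\alpha$-powers appearing on the right-hand side in terms of $\beta$, the global prefactors of the statement fall into place.

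The subtlety is that $(1+a)/N=1-2m$ is a non-positive integer for $m\geq 1$, so $\Gamma((1+a)/N)\zeta((1+a)/N)$ on the left of \eqref{Eqn:Analytic continuation} develops a simple pole as $a\to N-1-2Nm$; the matching pole on the right must come from the Meijer $G$-function. I therefore set $a=N-1-2Nm+\varepsilon$ and let $\varepsilon\to 0$, keeping only the finite parts. Applying Theorem~\ref{meijergsim}, the Meijer $G$-function appearing inside ${}_{1/2}K^{(N)}_{a/(2N)}$ splits into a ${}_1F_{2N}$-piece (with coefficient $\Gamma((1-N+a)/2)/\Gamma((N-a)/2)$) and the elementary exponential sum $A_{a,N}$ of \eqref{aanzodd}. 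Near $a=N-1-2Nm$ two of the lower ${}_1F_{2N}$-parameters coalesce while the coefficient ratio has a simple pole, and the standard l'H\^opital-type limit brings in a derivative of $\Gamma$, producing the series $\sum_{h\geq 0}(\psi(2Nh+1)-\log z)z^{2Nh}/\Gamma(2Nh+1)$.

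At this point Proposition~\ref{kuchnahi}(a) enters crucially: that series is precisely its left-hand side, and hence equals a sum of $(\sinh\operatorname{Shi}-\cosh\operatorname{Chi})(ze^{i\pi k/N})$ over $k=-(N-1)/2,\dots,(N-1)/2$ plus a finite trigonometric series in $z$. The first sum recombines with the $A_{a,N}$-contribution into a convergent Lambert-type double series; using the Dirichlet series evaluation~\eqref{Sdirichlet} for $\sum_{n}S_{a}^{(N)}(n)/n^{s}$, it reorganizes as $\sum_{n\geq 1}n^{-2m}\sum_{j=-(N-1)/2}^{(N-1)/2}\bigl(\psi(z_{j,n})+\psi(-z_{j,n})\bigr)$ with $z_{j,n}=\tfrac{i\beta}{2\pi}(2n)^{1/N}e^{i\pi j/N}$, the $\pm$ pairing arising from the symmetry $j\leftrightarrow -j$ together with a reflection of $\psi$. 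The residual trigonometric finite sum, after applying Euler's formula~\eqref{ef} to convert $\zeta(2j)$ into $B_{2j}$, condenses into the Bernoulli correction sum $\sum_{j=0}^{m-1}B_{2j}\zeta(2Nm+1-2Nj)\alpha^{2j-2Nm/(N+1)-1/2}/(2j)!$ on the left of Corollary~\ref{gencompanion}, while the isolated $\gamma\,\zeta(2m)$ term on the right arises from the $h=0$ contribution through $\psi(1)=-\gamma$ paired with $\sum_{n}n^{-2m}=\zeta(2m)$.

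The main technical obstacle is the double bookkeeping: first, the pole cancellation between $\Gamma((1+a)/N)$ on the left of \eqref{Eqn:Analytic continuation} and $\Gamma((1-N+a)/2)$ inside the Meijer $G$-function must leave precisely a finite remainder with the correct prefactors of $2$, $\pi$, and $N$; second, the reassembly of the trigonometric part into Bernoulli numbers requires that the factors emerging from the $\Gamma$-limits, from Theorem~\ref{meijergsim}, and from \eqref{Sdirichlet} align exactly with the $2^{N(2j-1)}$ and $(N+1)$-denominator powers in the statement. In view of the analogous (and similarly delicate) computation carried out for Theorem~\ref{zetagen3}, I expect no qualitatively new phenomenon to arise here, only a longer limit analysis because the object being extracted is a digamma series rather than a further Lambert series.
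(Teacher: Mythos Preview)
Your overall plan matches the paper's: specialize Theorem~\ref{Analytic continuation} at $a=-2Nm-1+N$, use Theorem~\ref{meijergsim}, apply L'H\^opital to the Meijer $G$ part, invoke Proposition~\ref{kuchnahi}(a), and change variables $y=2^{N}\alpha$. However, you have misrouted two of the key contributions, and this would block a clean execution.

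First, the pole in $\Gamma\bigl((1+a)/N\bigr)\zeta\bigl((1+a)/N\bigr)$ is \emph{not} cancelled by anything coming out of the Meijer $G$-series. The paper handles this term jointly with the finite sum $\sum_{k=0}^{m}(-y^{2}/4\pi^{2})^{k}\zeta(-2kN-N-a)\zeta(2k+2)$: at $a=-2Nm-1+N$ the summand with $k=m-1$ becomes $\zeta(1)\zeta(2m)$, and it is this pole that cancels against $\Gamma(1-2m)$. The resulting Laurent computation (the paper's $L^{*}$) is what produces the $\gamma\,\zeta(2m)$ term, via the constant term $\gamma$ in the Laurent expansion of $\zeta(s)$ at $s=1$; it does \emph{not} come from $\psi(1)=-\gamma$ in the $h=0$ term of the $\psi$-series as you suggest. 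Consequently, the Bernoulli correction $\sum_{j=0}^{m-1}B_{2j}\zeta(2Nm+1-2Nj)/(2j)!$ on the left of the corollary is the surviving part of this same finite sum (re-indexed), not a ``residual trigonometric finite sum'' from Proposition~\ref{kuchnahi}(a); in fact the $\sum_{j=1}^{N-1}$ piece of \eqref{Generalized Dixon-Ferror identity odd} cancels completely against the $j=2,\dots,N$ contributions of \eqref{Reduced Meijer G}.

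Second, the Meijer $G$ part is a \emph{separate} $0/0$ limit, driven by the factor $1/\sin(\pi a/2)$ sitting in front of both the ${}_1F_{2N}$ (after rewriting $\Gamma((1-N+a)/2)/\Gamma((N-a)/2)$ via \eqref{quotient gamma}) and $A_{a,N}$. L'H\^opital applied to the combined numerator produces the series $\sum_{h}\bigl(\psi(2Nh+1)-\log z\bigr)z^{2Nh}/\Gamma(2Nh+1)$ as you say, and Proposition~\ref{kuchnahi}(a) then yields the $(\sinh\operatorname{Shi}-\cosh\operatorname{Chi})$ sum. But the passage from this to $\psi\bigl(\tfrac{i\beta}{2\pi}(2n)^{1/N}e^{i\pi j/N}\bigr)+\psi\bigl(-\tfrac{i\beta}{2\pi}(2n)^{1/N}e^{i\pi j/N}\bigr)$ is not achieved by $j\leftrightarrow -j$ symmetry and $\psi$-reflection. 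The paper first writes $(\sinh\operatorname{Shi}-\cosh\operatorname{Chi})(w)=\int_{0}^{\infty}t\cos t/(t^{2}+w^{2})\,dt$ via \eqref{2equiv}, unfolds $S_{a}^{(N)}(n)$ as a double sum, and then applies \cite[Theorem 2.2]{DGKM} to the resulting $\sum_{\ell\geq 1}\int_{0}^{\infty}t\cos t/(t^{2}+\ell^{2}c^{2})\,dt$; that external identity is what supplies both $\psi(ic)+\psi(-ic)$ and the accompanying $\log$ term (the latter also feeds into the $\zeta'(2m)$ cancellations). Without this step your reorganization does not go through.
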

	\begin{proof}
		Let $a\to-2Nm-1+N$ in Theorem \ref{Analytic continuation} so that
		\begin{align}\label{Eqn:Analytic continuationherglotz}
			&\sum_{n=1}^\infty \sigma_{-2Nm-1+N}^{(N)}(n) e^{-ny}  + \frac{1}{2}\zeta(2Nm+1-N) - \frac{1}{y}\zeta(2Nm+1) \nonumber\\
			&= \lim_{a\to-2Nm-1+N}\tfrac{2 (2\pi)^{\frac{1}{N}-\frac{1}{2}} N^{\frac{a-1}{2}}}{y^{\frac{1}{N}+\frac{a}{2N}}} \sum_{n=1}^\infty \frac{S_a^{(N)}(n)}{n^{\frac{a}{2N}}} \bigg [{}_{\frac{1}{2}} K_{\frac{a}{2N}}^{(N)}\left(\substack{\frac{4\pi^{N+1}n}{yN^N}, 0}\right) - \frac{2^{\frac{1}{2} + \frac{a+1}{N}} \pi^{\frac{(1-N)a}{2N} -N}} {\left(\frac{4\pi^{N+1}n}{yN^N}\right)^{1+\frac{1}{N}+\frac{a}{2N}}}   \frac{\sin\(\frac{\pi}{2}(N-a)\)}{2^{N-1}}\nonumber\\
			&\quad\times C_{m,N}\left(\substack{\tfrac{1}{2}, \tfrac{a}{2N}, 0, \tfrac{4\pi^{N+1}n}{yN^N}}\right)\bigg]+\hspace{-5pt} \lim_{a\to-2Nm-1+N}\left( \tfrac{1}{N}\tfrac{\Gamma \left(\frac{1+a}{N}\right) \zeta \left(\frac{1+a}{N}\right)}{y^{\frac{1+a}{N}}}+\tfrac{y}{2\pi^{2}} \sum_{k=0}^m \left(-\tfrac{y^2}{4\pi^2}\right)^k   \zeta(-2kN-N-a)\zeta(2k+2)\right).
		\end{align}
		We now simplify the expression in square-brackets on the right-hand side of \eqref{Eqn:Analytic continuationherglotz}. Using Theorem \ref{meijergsim} and separating the $k=m$ term from the expression containing $C_{m,N}$, where $C_{m,N}$ is given in \eqref{cmnsimplified}, we have
		\begin{align}\label{ra+otherterm}
			&{}_{\frac{1}{2}} K_{\frac{a}{2N}}^{(N)}\left(\frac{4\pi^{N+1}n}{yN^N}, 0\right) - \frac{2^{\frac{1}{2} + \frac{a+1}{N}} \pi^{\frac{(1-N)a}{2N} -N}} {\left(\frac{4\pi^{N+1}n}{yN^N}\right)^{1+\frac{1}{N}+\frac{a}{2N}}}   \frac{\sin\(\frac{\pi}{2}(N-a)\)}{2^{N-1}}C_{m,N}\left(\tfrac{1}{2}, \tfrac{a}{2N}, 0, \tfrac{4\pi^{N+1}n}{yN^N}\right)\nonumber\\
			&=R(a)+\frac{(y/n)^{1+{1\over N}+{a\over 2N}+2m}N^{1-a\over 2}} {\pi(2\pi)^{{3\over 2}+{1\over N}+a+N+2m(N+1)} }\sin \left(\frac{\pi}{2}(N-a)\right)\Gamma(N+2Nm+a+1),
		\end{align}
		where
		\begin{align}\label{defr}
			R(a)&:=R_{m, N}(a, n, y)\nonumber\\
			&:=\frac{(2\pi)^{a-2\over N}}{\pi^2 \sqrt{2}}\frac{(n/y)^{{a\over 2N}-{2\over N}}}{N^{{a\over 2}-2}} \Bigg\{\frac{ N^{N-a - \frac{1}{2}} \Gamma\left(\frac{1-N+a}{2}\right)\left(\frac{4\pi^{2N+2}n^2}{y^2N^{2N}}\right)^{\frac{1}{2}+\frac{1-a}{2N}} }{\Gamma\left(\frac{N-a}{2}\right)}{}_1F_{2N}\Bigg( \begin{matrix}
				1 \\ \left\langle \frac{1}{2} - \frac{a+1}{2N} + \frac{i}{2N} \right\rangle_{i=1}^{2N}
			\end{matrix}\Bigg| -\frac{4\pi^{2N+2}n^2}{y^2N^{2N}} \Bigg)\nonumber\\
			&\quad+A_{a, N}\left(\frac{4\pi^{2N+2}n^2}{y^2N^{2N}}\right)\Bigg\}+\frac{(y/n)^{1+{1\over N}+{a\over 2N}}N^{1-a\over 2}} {\pi (2\pi)^{{3\over 2}+{1\over N}+a+N}} \sin \left(\frac{\pi}{2}(N-a)\right)\sum_{k=0}^{m-1}\Gamma (N+2Nk+a+1)\left( \frac{(2\pi)^{N+1}n}{y}\right)^{-2k},
		\end{align}
		and $A_{a,N}(z)$ is defined in \eqref{aanzodd}.
		
		Replacing $k$ by $k-1$ in the last expression in the definition of $R(a)$, then using the fact that $\Gamma (-N+2Nk+a+1)=\frac{\pi}{\Gamma(N-a-2Nk)\sin(\pi a)}$ (which follows from \eqref{refl}) and applying
		$\sin\left(\frac{\pi}{2}(N-a)\right)=(-1)^{\frac{N-1}{2}}\cos\left(\frac{\pi a}{2}\right)$, we see that
		\begin{align}\label{V1}
			&\frac{\left(\frac{y}{n}\right)^{1+{1\over N}+{a\over 2N}}N^{1-a\over 2}} {\pi (2\pi)^{{3\over 2}+{1\over N}+a+N}} \sin \left(\frac{\pi}{2}(N-a)\right)\sum_{k=0}^{m-1}\Gamma (N+2Nk+a+1)\left( \frac{(2\pi)^{N+1}n}{y}\right)^{-2k}\nonumber\\
			&=\frac{(-1)^{\frac{N-1}{2}}\left(\frac{y}{n}\right)^{-1+{1\over N}+{a\over 2N}}N^{1-a\over 2}}{2(2\pi)^{{\frac{1}{N}}-{1\over 2}+a-N}\sin\left(\frac{\pi a}{2}\right)}\sum_{k=1}^{m}\frac{\left( (2\pi)^{N+1}n\over y\right)^{-2k} }{\Gamma(N-a-2Nk)}.
		\end{align}
		Thus from \eqref{defr}, \eqref{V1} and the fact 
		\begin{equation}\label{quotient gamma}
		\frac{\Gamma\left(\frac{1-N+a}{2}\right)}{\Gamma\left(\frac{N-a}{2}\right) } =\frac{(-1)^{\frac{N-1}{2}}\sqrt{\pi}2^{N-a-1}}{\Gamma(N-a)\sin\left(\frac{\pi a}{2}\right)},
		\end{equation}
		which results from \eqref{gammaprop}, and using \eqref{aanzodd} and \eqref{defE} with $b=0$, we find that, with $Z=\frac{4\pi^{2N+2}n^2}{y^2N^{2N}}$, 
		\begin{align}\label{rsimplified}
			R(a)&=\frac{\sqrt{N}(-1)^{\frac{N-1}{2}}}{2\sin\left(\frac{\pi a}{2}\right)}\left(\tfrac{4\pi^2n}{yN^{N}}\right)^{\frac{a}{2N}}\hspace{-5pt}\Bigg\{\left(\frac{y}{n}\right)^{-1+{1\over N}}\frac{(2\pi)^{{1\over 2}-{1\over N}+N}}{\G(N-a)}\left(  \frac{y}{(2\pi)^{N+1}n}\right)^{\frac{a}{N}} \hspace{-5pt} {}_1F_{2N}\Bigg( \begin{matrix}
				1 \\ \left\langle \frac{1}{2} - \frac{a+1}{2N} + \frac{i}{2N} \right\rangle_{i=1}^{2N}
			\end{matrix}\Bigg| \tfrac{4\pi^{2N+2}n^2}{y^2N^{2N}} \Bigg)\nonumber\\
			&\quad-\frac{\sqrt{\pi}}{\sqrt{2}N} \left [  \sum\limits_{k=0}^{\frac{N-1}{2}} \mathscr{E}_{a, N, Z}(k, 0)+  \sum\limits_{k= \frac{N+1}{2}}^{\frac{3N-1}{2}} e^{-\pi i a}\mathscr{E}_{a, N, Z}(k, 0)+  \sum\limits_{k= \frac{3N+1}{2}}^{2N-1} e^{-2\pi i a}\mathscr{E}_{a, N, Z}(k, 0)\right ]\nonumber\\
			&\quad+\frac{\left(\frac{y}{n}\right)^{-1+{1\over N}}}{(2\pi)^{{\frac{1}{N}}-{1\over 2}-N}}\left(  \frac{y}{(2\pi)^{N+1}n}\right)^{\frac{a}{N}}\sum_{k=1}^{m}\frac{\left( (2\pi)^{N+1}n\over y\right)^{-2k} }{\Gamma(N-a-2Nk)}\Bigg\}.
		\end{align}
		%\begin{align}\label{rsimplified}
		%R(a)&=\frac{(-1)^{\frac{N-1}{2}}}{2N^{\frac{a-1}{2}}\sin\left(\frac{\pi a}{2}\right)}\Bigg\{\left(\frac{y}{n}\right)^{-1+{1\over N}+{a\over 2N}}\frac{(2\pi)^{{1\over 2}-{1\over N}-a+N}}{\G(N-a)}  {}_1F_{2N}\Bigg( \begin{matrix}
			%1 \\ \left\langle \frac{1}{2} - \frac{a+1}{2N} + \frac{i}{2N} \right\rangle_{i=1}^{2N}
			%\end{matrix}\Bigg| -\frac{4\pi^{2N+2}n^2}{y^2N^{2N}} \Bigg)-\frac{(2\pi)^{\frac{a}{N}+\frac{1}{2}}}{2N\left(\frac{y}{n}\right)^{\frac{a}{2N}}} \nonumber\\
			%&\quad\times\left [\substack{  \sum\limits_{k=0}^{\frac{N-1}{2}} \mathscr{E}_{a, N, z}(k, 0)+  \sum\limits_{k= \frac{N+1}{2}}^{\frac{3N-1}{2}} e^{-\pi i a}\mathscr{E}_{a, N, z}(k, 0)+  \sum\limits_{k= \frac{3N+1}{2}}^{2N-1} e^{-2\pi i a}\mathscr{E}_{a, N, z}(k, 0)} \right ]+\frac{\left(\frac{y}{n}\right)^{-1+{1\over N}+{a\over 2N}}}{(2\pi)^{{\frac{1}{N}}-{1\over 2}+a-N}}\sum_{k=1}^{m}\frac{\left( (2\pi)^{N+1}n\over y\right)^{-2k} }{\Gamma(N-a-2Nk)}\Bigg\}.
			%\end{align}
			We now show that for $a=-2Nm-1+N$, the above expression takes the form $\frac{0}{0}$. Obviously, the sine function in the denominator is zero since $\sin\left(\frac{\pi }{2}(-2Nm-1+N)\right)=0$. The next task is to show that the expression in the curly braces of \eqref{rsimplified} equals zero for $a=-2Nm-1+N$. To that end, using \eqref{prudnikovid}, we see that
			\begin{align*}
				&\frac{(2\pi)^{2Nm+\frac{1}{2}}}{\G(2Nm+1)}\left(\frac{2\pi n}{y}\right)^{2m}  {}_1F_{2N}\Bigg( \begin{matrix}
					1 \\ \left\langle m + \frac{i}{2N} \right\rangle_{i=1}^{2N}
				\end{matrix}\Bigg| \frac{4\pi^{2N+2}n^2}{y^2N^{2N}} \Bigg)\nonumber\\
				&=\frac{(2\pi)^{2Nm+\frac{1}{2}}}{(2Nm)!}\left(\frac{2\pi n}{y}\right)^{2m}\left\lbrace \frac{(2Nm)!}{(2N)^{2Nm+1}\left( \frac{4\pi^{2N+2}n^2}{y^2N^{2N}}\right)^m } \Bigg[\sum_{k=0}^{2N-1}\exp\left\lbrace 2Ne^{\pi i k \over N} \left(\frac{2\pi^{N+1}n}{yN^N} \right)^{1\over N}  \right\rbrace \right. \nonumber \\
				& \quad\left. -(2N)^{2Nm+1}\sum_{k=1}^{m}\frac{\left( \frac{4\pi^{2N+2}n^2}{y^2N^{2N}}\right)^{m-k} }{(2N)^{2Nk}(2Nm-2Nk)!}\Bigg]\right\rbrace.
			\end{align*}
			Thus using the above equation, we see that the first term inside the curly braces on the right-hand side of \eqref{rsimplified} completely cancels with the other two terms in there for $a=-2Nm-1+N$. Therefore, in order to evaluate $\lim_{a\to-2Nm-1+N}R(a)$, we need to use L'Hopital's rule. Before doing this, however, we further simplify the expression in curly braces of \eqref{rsimplified}. The series definition of ${}_1F_{2N}$ along with an application of \eqref{gmf} yields
			\begin{align}
				&\frac{\left(\frac{y}{n}\right)^{-1+{1\over N}}}{(2\pi)^{{\frac{1}{N}}-{1\over 2}-N}}\left(  \frac{y}{(2\pi)^{N+1}n}\right)^{\frac{a}{N}}  \left[\frac{1}{\G(N-a)}{}_1F_{2N}\Bigg( \begin{matrix}
					1 \\ \left\langle \frac{1}{2} - \frac{a+1}{2N} + \frac{i}{2N} \right\rangle_{i=1}^{2N}
				\end{matrix}\Bigg| \tfrac{4\pi^{2N+2}n^2}{y^2N^{2N}} \Bigg)+ \sum_{k=1}^{m}\frac{\left( (2\pi)^{N+1}n\over y\right)^{-2k} }{\Gamma(N-a-2Nk)}\right]\nonumber\\
				&=\frac{\left(\frac{y}{n}\right)^{-1+{1\over N}}}{(2\pi)^{{\frac{1}{N}}-{1\over 2}-N}}\left(  \frac{y}{(2\pi)^{N+1}n}\right)^{\frac{a}{N}} \left[\sum_{h=0}^{\infty}\frac{1}{\Gamma(N-a+2Nh)}\left(\frac{(2\pi)^{N+1}n}{y}\right)^{2h}+\sum_{h=-m}^{-1}\frac{\left( (2\pi)^{N+1}n\over y\right)^{2h} }{\Gamma(N-a+2Nh)}\right]\nonumber\\
				&=\frac{\left(\frac{y}{n}\right)^{-1+{1\over N}}}{(2\pi)^{{\frac{1}{N}}-{1\over 2}-N}}\left(  \frac{y}{(2\pi)^{N+1}n}\right)^{\frac{a}{N}+2m}\sum_{h=0}^{\infty}\frac{\left( (2\pi)^{N+1}n\over y\right)^{2h} }{\Gamma(N-2Nm-a+2Nh)}\nonumber\\
				&=:R_{1}^{*}(a)\label{rstara}.
			\end{align}
			Further, the remaining expression in the curly braces in \eqref{rsimplified} (without the minus sign in front) can be rephrased using \eqref{aanzodd}, \eqref{meijergsimeqn} and \eqref{Reduced Meijer G} by
			\begin{align}\label{r2stara}
				&\frac{
					\sqrt{2}(-1)^{\frac{N+1}{2}}}{\sqrt{N}}\left(\frac{4\pi^{2N+2}n^2}{y^2N^{2N}}\right)^{-\frac{1}{N}}\sin\left(\frac{\pi a}{2}\right)A_{a,N}\left(\frac{4\pi^{2N+2}n^2}{y^2N^{2N}}\right) \nonumber\\
				&=-\frac{
					\sqrt{2}(-1)^{\frac{N+1}{2}}}{\sqrt{N}}\sum_{j=1}^N \frac{ (2N)^{2j-2} \left(\frac{\pi}{N}\right)^{1/2} }{\Gamma(2j-1)}S^{*}(a) \left(\frac{4\pi^{2N+2}n^2}{y^2N^{2N}}\right)^{\frac{j-1}{N}}{}_1F_{2N}\left( \left. \begin{matrix}
					1  \\ \left\langle \frac{j-1}{N}+\frac{i}{2N} \right\rangle_{i=1}^{2N}
				\end{matrix}\right| \frac{4\pi^{2N+2}n^2}{y^2N^{2N}} \right)\nonumber\\
				&=:R_{2}^{*}(a), \text{say},
			\end{align}
			where
			\begin{align}\label{s*a}
				S^{*}(a):=S_{j,N}^{*}(a):=\frac{(-1)^j\sin\left(\frac{\pi a}{2}\right)}{\cos\left(\pi \left(\frac{j}{N}+ \frac{a-1}{2N} \right)\right)}.
			\end{align}
			Therefore, from \eqref{rsimplified}, \eqref{rstara} and \eqref{r2stara},
			\begin{align}\label{rasimplified}
				R(a)&=\frac{1}{2}\sqrt{N}(-1)^{\frac{N-1}{2}}\left(\frac{4\pi^2n}{yN^{N}}\right)^{\frac{a}{2N}}\frac{ R_{1}^{*}(a)-R_{2}^{*}(a)}{\sin\left(\frac{\pi a}{2}\right)}
				%R(a)&=\frac{\sqrt{N}(-1)^{\frac{N-1}{2}}}{2\sin\left(\frac{\pi a}{2}\right)}\left(\frac{4\pi^2n}{yN^{N}}\right)^{\frac{a}{2N}}\left\{ R_{1}^{*}(a)-\frac{
					%\sqrt{2}(-1)^{\frac{N+1}{2}}}{\sqrt{N}}\left(\frac{4\pi^{2N+2}n^2}{y^2N^{2N}}\right)^{-\frac{1}{N}}\sin\left(\frac{\pi a}{2}\right)A_{a,N}\left(\frac{4\pi^{2N+2}n^2}{y^2N^{2N}}\right)\right\}\nonumber\\
				%&=\frac{\sqrt{N}(-1)^{\frac{N-1}{2}}}{2\sin\left(\frac{\pi a}{2}\right)}\left(\frac{4\pi^2n}{yN^{N}}\right)^{\frac{a}{2N}}\Bigg\{ R_{1}^{*}(a)+\frac{
				%\sqrt{2}(-1)^{\frac{N+1}{2}}}{\sqrt{N}}\left(\frac{4\pi^{2N+2}n^2}{y^2N^{2N}}\right)^{-\frac{1}{N}}\nonumber\\
			%&\quad\times\sum_{j=1}^N \frac{ (2N)^{2j-2} \left(\frac{\pi}{N}\right)^{1/2} }{\Gamma(2j-1)}S^{*}(a) \left(\frac{4\pi^{2N+2}n^2}{y^2N^{2N}}\right)^{\frac{j}{N}}{}_1F_{2N}\left( \left. \begin{matrix}
			%	1  \\ \left\langle \frac{j-1}{N}+\frac{i}{2N} \right\rangle_{i=1}^{2N}
			%\end{matrix}\right| \frac{4\pi^{2N+2}n^2}{y^2N^{2N}} \right)\Bigg\},
		\end{align}
		First,
		\begin{align*}
			%\frac{d}{da}R^{*}(a)&=\frac{\left(\frac{y}{n}\right)^{-1+{1\over N}}}{(2\pi)^{{\frac{1}{N}}-{1\over 2}-N}}\left(  \frac{y}{(2\pi)^{N+1}n}\right)^{2m+\frac{a}{N}}\Bigg\{\frac{1}{N}  \log\left(  \frac{y}{(2\pi)^{N+1}n}\right)\sum_{h=0}^{\infty}\frac{\left( (2\pi)^{N+1}n\over y\right)^{2h} }{\Gamma(N-2Nm-a+2Nh)}\nonumber\\
			%&\quad+\sum_{h=0}^{\infty}\frac{\left( (2\pi)^{N+1}n\over y\right)^{2h} \psi(N-2Nm-a+2Nh)}{\Gamma(N-2Nm-a+2Nh)}\Bigg\} 
			\frac{d}{da}R_{1}^{*}(a)&=\frac{\left(\frac{y}{n}\right)^{-1+{1\over N}}}{(2\pi)^{{\frac{1}{N}}-{1\over 2}-N}}\left(  \frac{y}{(2\pi)^{N+1}n}\right)^{2m+\frac{a}{N}}\sum_{h=0}^{\infty}\frac{ \psi(N-2Nm-a+2Nh)-\log\left((2\pi)^{N+1}n\over y\right)^{1/N}}{\Gamma(N-2Nm-a+2Nh)}\left( (2\pi)^{N+1}n\over y\right)^{2h}. 
		\end{align*}
		Applying Proposition \ref{kuchnahi}, we see that
		\begin{align}\label{rstarderivativespl}
			\left.\frac{d}{da}R_{1}^{*}(a)\right|_{a=-2Nm-1+N}&=\frac{\sqrt{2\pi}}{N}\sum_{k=-\frac{(N-1)}{2}}^{\frac{N-1}{2}}\left(\sinh\operatorname{Shi}-\cosh\operatorname{Chi}\right)\left(\substack{2\pi\left(\tfrac{2\pi n}{y}\right)^{\frac{1}{N}}  e^{\frac{\pi i k}{N}} }\right)\nonumber\\
			&\quad+
			\frac{\pi^{\frac{3}{2}}}{N\sqrt{2}}\sum_{j=1}^{N-1} \frac{(-1)^j}{\sin \left(\frac{\pi j}{N} \right)} \sum_{h=0}^\infty \frac{\left(2\pi\left(\frac{2\pi n}{y}\right)^{\frac{1}{N}}\right)^{2Nh+2j}}{(2Nh+2j)!}.
		\end{align}
		We next simplify $S^{*}(a)$ defined in \eqref{s*a}. Using \eqref{cc}, we see that
		\begin{align*}
			S^{*}(a)=\frac{\cos\left(N\pi\left( {j\over N}+{a-1\over 2N}\right)  \right) }{\cos\left(\pi\left( {j\over N}+{a-1\over 2N}\right)  \right) }=(-1)^{\frac{N-1       }{2}}\sum_{\ell=-{(N-1)\over 2}}^{N-1\over 2}(-1)^{\ell}\exp{\left(-{2\pi i \ell j \over N}-{\pi i \ell (a-1)\over N}\right)}
		\end{align*}
		so that
		\begin{align}\label{sstar}
			\left.\frac{d}{da}S^*(a)\right|_{a=-2Nm-1+N}&={\pi i  \over N}(-1)^{\frac{N+1}{2}}\sum_{l=-{(N-1)\over 2}}^{N-1\over 2} l e^{-{2\pi i l (j-1)\over N}}\nonumber\\
			&=\begin{cases}
				0 \hspace{30mm}\text{ for } N\mid (j-1),\\
				i\pi\frac{(-1)^{1-j+\frac{N+1}{2}}}{\left(e^{\frac{\pi i(1-j)}{N}}-e^{-\frac{\pi i(1-j)}{N}}\right)}\text{ for } N\nmid (j-1),
				\end{cases}
		\end{align}
		where in the last step, we used \eqref{Orthogonal character}. The series definition of ${}_1F_{2N}$ along with \eqref{gmf} gives
		\begin{align*}
			{}_1F_{2N}\left( \left. \begin{matrix}
				1  \\ \left\langle{j-1 \over N}+{i\over 2N}\right\rangle_{i=1}^{ 2N}
			\end{matrix}\right| \frac{4\pi^{2N+2}n^2}{y^2N^{2N}}\right)
			=\sum_{h=0}^{\infty}\frac{\Gamma 
				\left({2j-1} \right)}{\Gamma 
				\left({2j-1+ 2Nh} \right)}\left((2\pi)^{N+1} n\over y \right)^{2h}.
		\end{align*}
		Therefore, from \eqref{r2stara}, \eqref{sstar} and the above identity,
		\begin{align}\label{r2staraderivative}
			\left.\frac{d}{da}R_{2}^{*}(a)\right|_{a=-2Nm-1+N}\hspace{-5pt}&=\frac{\pi^{3/2}i}{2\sqrt{2}N^3}\sum_{j=2}^{N}\frac{(-1)^j(2N)^{2j}}{\left(e^{\frac{\pi i(1-j)}{N}}-e^{-\frac{\pi i(1-j)}{N}}\right)}\left(\tfrac{ 4\pi^{2N+2}n^2}{y^2 N^{2N}} \right)^{j-1\over N} \sum_{h=0}^{\infty}\frac{1 }{\Gamma\left({2j-1+ 2Nh} \right)}\left(\tfrac{(2\pi)^{N+1} n}{y }\right)^{2h}\nonumber\\
			&=\frac{\pi^{3\over 2} }{N\sqrt{2}}\sum_{j=1}^{N-1}\frac{(-1)^j}{\sin\left( {\pi j\over N}\right) }{}{} \left(\frac{ (2\pi)^{N+1}n}{y } \right)^{2j\over N}\sum_{h=0}^{\infty}\frac{1}{\Gamma 
				\left({2j+1+ 2Nh} \right)}\left((2\pi)^{N+1} n\over y \right)^{2h}.
		\end{align}
		Inserting \eqref{rstarderivativespl} and \eqref{r2staraderivative} in \eqref{rasimplified}, it can be seen that the sums over $j$ completely cancel out whence
		\begin{align}\label{ralimitfinal}
			\lim\limits_{a\to -2Nm-1+N}R(a)&=\frac{\sqrt{2}(-1)^m}{\sqrt{N\pi}}\left( \frac{4\pi^2n}{yN^N}\right)^{{-m-\frac{1}{2N}+\frac{1}{2}}}\sum_{k=-\frac{(N-1)}{2}}^{\frac{N-1}{2}}\left(\sinh\operatorname{Shi}-\cosh\operatorname{Chi}\right)\left(\substack{2\pi\left(\tfrac{2\pi n}{y}\right)^{\frac{1}{N}}  e^{\frac{\pi i k}{N}} }\right).
		\end{align}
		Thus from \eqref{ra+otherterm}, \eqref{ralimitfinal}, and interchanging the order of limit and summation which is justified since the summand of the series below is $\mathcal{O}\left(n^{\max \left(-\frac{\re(a)}{2N}, \frac{\re(a)}{2N} + \frac{1}{N} - 1 \right)-2m-3-\frac{1}{N}-\frac{\re(a)}{2N}+\epsilon} \right)$ for any $\epsilon>0$, we see that
		\begin{align}\label{unnamed}
			&\lim_{a\to-2Nm-1+N}\tfrac{2 (2\pi)^{\frac{1}{N}-\frac{1}{2}} N^{\frac{a-1}{2}}}{y^{\frac{1}{N}+\frac{a}{2N}}} \sum_{n=1}^\infty \frac{S_a^{(N)}(n)}{n^{\frac{a}{2N}}} \bigg [{}_{\frac{1}{2}} K_{\frac{a}{2N}}^{(N)}\left(\substack{\frac{4\pi^{N+1}n}{yN^N}, 0}\right) - \frac{2^{\frac{1}{2} + \frac{a+1}{N}} \pi^{\frac{(1-N)a}{2N} -N}} {\left(\frac{4\pi^{N+1}n}{yN^N}\right)^{1+\frac{1}{N}+\frac{a}{2N}}}   \frac{\sin\(\frac{\pi}{2}(N-a)\)}{2^{N-1}}\nonumber\\
			&\quad\times C_{m,N}\left(\substack{\tfrac{1}{2}, \tfrac{a}{2N}, 0, \tfrac{4\pi^{N+1}n}{yN^N}}\right)\bigg]\nonumber\\
			&={4(-1)^m\over Ny}\left( y\over 2\pi\right)^{2m} \sum_{k=-{(N-1)\over 2}}^{k={N-1\over 2}}\sum_{n=1}^{\infty} {S_{-2Nm-1+N}^{(N)}(n)}\left(\sinh\operatorname{Shi}-\cosh\operatorname{Chi}\right)\left(\substack{2\pi\left(\tfrac{2\pi n}{y}\right)^{\frac{1}{N}}  e^{\frac{\pi i k}{N}} }\right)	\nonumber \\
			&\qquad +\frac{(-1)^my^{2m+1}\Gamma(2N)}{\pi^2(2\pi)^{2(N+m)}}\sum_{n=1}^{\infty} \frac{S_{-2Nm-1+N}^{(N)}(n)}{n^{2}}.
		\end{align}
		Using the second equality in \eqref{2equiv}
		in the first step below and the definition of $S_{a}^{(N)}(n)$ from \eqref{defbf} in the second step, we see that
		\begin{align}\label{nagoyaapp}
			&\sum_{n=1}^{\infty} {S_{-2Nm-1+N}^{(N)}(n)}\left(\sinh\operatorname{Shi}-\cosh\operatorname{Chi}\right)\left(\substack{2\pi\left(\tfrac{2\pi n}{y}\right)^{\frac{1}{N}}  e^{\frac{\pi i k}{N}} }\right)\nonumber\\
			&=\sum_{n=1}^{\infty} S_{-2Nm-1+N}^{(N)}(n)\int_{0}^{\infty}\frac{t\cos(t)\, dt}{t^2+\left(\substack{2\pi\left(\frac{2\pi n}{y}\right)^{\frac{1}{N}}e^{\frac{i\pi k}{N}}}\right)^2}\nonumber\\
			&=\sum_{n=1}^{\infty}\frac{1}{n^{2m}}\sum_{\ell=1}^{\infty}\int_{0}^{\infty}\frac{t\cos(t)\, dt}{t^2+\ell^2\left(\substack{2\pi\left(\frac{2\pi n}{y}\right)^{\frac{1}{N}}e^{\frac{i\pi k}{N}}}\right)^2}\nonumber\\
			&=\frac{1}{2}\sum_{n=1}^{\infty}\frac{1}{n^{2m}}\left[\log\left(\substack{\left(2\pi n\over y \right)^{1\over N}e^{\pi i k \over N}}\right)-\frac{1}{2}\left\{ \psi\left(\substack{i\left(2\pi n\over y \right)^{1\over N}e^{\pi i k \over N}}  \right)+\psi\left(\substack{-i\left(2\pi n\over y \right)^{1\over N}e^{\pi i k \over N}  }\right)\right\} \right],
		\end{align}
	where in the last step, we used \cite[Theorem 2.2]{DGKM}.
%		Note that from \eqref{saNn}, for Re$(s)>\max\left(\frac{1}{N},\frac{1+\re(a)}{N}\right)$,
%		\begin{equation}\label{saNnds}
%			\sum_{n=1}^{\infty}\frac{S_a^{(N)}(n)}{n^{s}}=\zeta(Ns)\zeta\left(s+1-\frac{1+a}{N}\right).
%		\end{equation}
		Inserting \eqref{nagoyaapp} and \eqref{Sdirichlet} with $s=2$ in \eqref{unnamed} and using the functional equation for $\zeta(s)$ and the fact that $\sum_{n=1}^{\infty}\log(n)n^{-s}=-\zeta'(s)$ for Re$(s)>1$, we deduce that
		\begin{align}\label{mainlimit}
			&\lim_{a\to-2Nm-1+N}\tfrac{2 (2\pi)^{\frac{1}{N}-\frac{1}{2}} N^{\frac{a-1}{2}}}{y^{\frac{1}{N}+\frac{a}{2N}}} \sum_{n=1}^\infty \frac{S_a^{(N)}(n)}{n^{\frac{a}{2N}}} \bigg [{}_{\frac{1}{2}} K_{\frac{a}{2N}}^{(N)}\left(\substack{\frac{4\pi^{N+1}n}{yN^N}, 0}\right) - \frac{2^{\frac{1}{2} + \frac{a+1}{N}} \pi^{\frac{(1-N)a}{2N} -N}} {\left(\frac{4\pi^{N+1}n}{yN^N}\right)^{1+\frac{1}{N}+\frac{a}{2N}}}   \frac{\sin\(\frac{\pi}{2}(N-a)\)}{2^{N-1}}\nonumber\\
			&\quad\times C_{m,N}\left(\substack{\tfrac{1}{2}, \tfrac{a}{2N}, 0, \tfrac{4\pi^{N+1}n}{yN^N}}\right)\bigg]\nonumber\\
			&=\frac{(-1)^my}{\pi^2(2\pi)^{2N}}{\left( y\over 2\pi\right) }^{2m}\Gamma(2N)\zeta(2N)\zeta(2m+2)\nonumber\\
			&\quad+{2(-1)^m\over Ny}\left( y\over 2\pi\right)^{2m} \sum_{k=-{(N-1)\over 2}}^{k={N-1\over 2}}\sum_{n=1}^{\infty}\frac{1}{n^{2m}}\left[\log\left(\substack{\left(2\pi n\over y \right)^{1\over N}e^{\pi i k \over N}}\right)-\frac{1}{2}\left\{ \psi\left(\substack{i\left(2\pi n\over y \right)^{1\over N}e^{\pi i k \over N}}  \right)+\psi\left(\substack{-i\left(2\pi n\over y \right)^{1\over N}e^{\pi i k \over N}  }\right)\right\} \right]\nonumber\\
			&=-\frac{(-1)^my}{2\pi^2}{\left( y\over 2\pi\right) }^{2m}\zeta(1-2N)\zeta(2m+2)+{2(-1)^m\over Ny}\left( y\over 2\pi\right)^{2m} \left(-\zeta'(2m)+\log\left(\frac{2\pi}{y}\right)\zeta(2m)\right)\nonumber\\
			&\quad-
			{(-1)^m\over Ny}\left( y\over 2\pi\right)^{2m} \sum_{j=-{(N-1)\over 2}}^{j={N-1\over 2}}\sum_{n=1}^{\infty}\left\{ \psi\left(\substack{i\left(2\pi n\over y \right)^{1\over N}e^{\pi i j \over N}}  \right)+\psi\left(\substack{-i\left(2\pi n\over y \right)^{1\over N}e^{\pi i j \over N}  }\right)\right\}.
		\end{align}
		We now evaluate the second limit on the right-hand side of \eqref{Eqn:Analytic continuationherglotz}, that is, 
		\begin{align*}
			L_1:=\lim_{a\to-2Nm-1+N}\left( \frac{1}{N}\frac{\Gamma \left(\frac{1+a}{N}\right) \zeta \left(\frac{1+a}{N}\right)}{y^{\frac{1+a}{N}}}+\frac{y}{2\pi^{2}} \sum_{k=0}^m \left(-\frac{y^2}{4\pi^2}\right)^k   \zeta(-2kN-N-a)\zeta(2k+2)\right).
		\end{align*}
		Note that inside the finite sum, the only problematic term for $a=-2Nm-1+N$ is $k=m-1$. Thus, \begin{align}\label{l1}
			L_1&=\frac{y}{2\pi^{2}} \sum_{k=0}^{m-2} \left(-\frac{y^2}{4\pi^2}\right)^k   \zeta(-2kN+2Nm+1-2N)\zeta(2k+2)+(-1)^m\frac{y}{2\pi^{2}}\left(\frac{y}{2\pi}\right)^{2m}\zeta(1-2N)\zeta(2m+2)+L^{*},
		\end{align}
		where
		\begin{align*}
			L^{*}&:=\lim_{a\to-2Nm-1+N}\left( \frac{1}{N}\frac{\Gamma \left(\frac{1+a}{N}\right) \zeta \left(\frac{1+a}{N}\right)}{y^{\frac{1+a}{N}}}+\frac{y}{2\pi^{2}}\left(-\frac{y^2}{4\pi^2}\right)^{m-1}   \zeta(-2Nm+N-a)\zeta(2m+2)\right)\nonumber\\
			&=\lim_{s\to 1-2m}\left(\frac{1}{N}\frac{\Gamma(s)\zeta(s)}{y^s}-\frac{2}{y}\left(\frac{-y^2}{4\pi^2}\right)^{m}\zeta(-2Nm+N+1-Ns)\zeta(2m)\right). 
		\end{align*}
		Using the power series expansions around $s=1-2m$, we observe that
		\begin{align*}
			\zeta(-2Nm+N+1-Ns)&=\frac{1}{-2Nm+N-Ns}+\gamma+O(|-2Nm+N-Ns|)\nonumber\\
			\Gamma(s) &=\dfrac{-1}{\left( 2m-1\right) ! \left( s+2m-1 \right) }-\dfrac{\psi \left( 2m\right) }{\left( 2m-1\right) !}+O\left( \left| s+2m-1\right| \right),\nonumber\\
			\zeta(s)&=\zeta(1-2m)+\zeta'(1-2m)(s+2m-1)+O(|s+2m-1|^2),\nonumber\\
			y^{-s}&=y^{2m-1}\left(1-(s+2m-1)\log y+O_y(|s+2m-1|^2)\right),
		\end{align*}
		as $s\to1-2m$, the above limit can be evaluated to
		\begin{align}\label{lstar}
			L^{*}&=\frac{y^{2m-1}}{N(2m)!}\left\{2m\zeta(1-2m)\left(\log y-\psi(2m)\right)-2m\zeta'(1-2m)+N\gamma B_{2m}\right\}\nonumber\\
			&=\dfrac{y^{2m-1}B_{2m}}{N\left( 2m\right) !}\log \left( \dfrac{2 \pi }{y}\right) +\dfrac{2 \left( -1\right) ^{m}\zeta'\left( 2m\right) y^{2m-1}}{N\left( 2 \pi \right) ^{2m}}+\dfrac{y^{2m-1}\gamma B_{2m}}{\left( 2m\right)!},
		\end{align}
		where we have repeatedly used Euler's formula \eqref{ef} and the fact that $\zeta(1-2m)=-B_{2m}/(2m)$. Thus replacing $k$ by $k-1$  in \eqref{l1} and combining the resultant with \eqref{lstar}, we deduce that
		\begin{align}\label{l1eval}
			L_{1}&= -\frac{2}{y} \sum_{k=1}^{m-1} \left(-\frac{y^2}{4\pi^2}\right)^k   \zeta(-2kN+2Nm+1)\zeta(2k)+(-1)^m\frac{y}{2\pi^{2}}\left(\frac{y}{2\pi}\right)^{2m}\zeta(1-2N)\zeta(2m+2)\nonumber\\
			&\quad+\dfrac{y^{2m-1}B_{2m}}{N\left( 2m\right) !}\log \left( \dfrac{2 \pi }{y}\right) +\dfrac{2 \left( -1\right) ^{m}\zeta'\left( 2m\right) y^{2m-1}}{N\left( 2 \pi \right) ^{2m}}+\dfrac{y^{2m-1}\gamma B_{2m}}{\left( 2m\right)!}.
		\end{align}
		Now substitute \eqref{mainlimit} and \eqref{l1eval} in \eqref{Eqn:Analytic continuationherglotz}, then move the finite sum over $k$ on the left-hand side, induct the term $-\frac{1}{y}\zeta(2Nm+1)$ as its $k=0$ term, and rearrange to arrive at
		\begin{align*}
			&\frac{1}{2}\zeta(2Nm+1-N)+\sum_{n=1}^\infty \sigma_{-2Nm-1+N}^{(N)}(n) e^{-ny}  +\frac{2}{y} \sum_{k=0}^{m-1} \left(-\frac{y^2}{4\pi^2}\right)^k   \zeta(-2kN+2Nm+1)\zeta(2k) \nonumber\\
			&=\dfrac{y^{2m-1}\gamma B_{2m}}{\left( 2m\right)!}-
			{(-1)^m\over Ny}\left( y\over 2\pi\right)^{2m} \sum_{j=-{(N-1)\over 2}}^{j={N-1\over 2}}\sum_{n=1}^{\infty}\left\{ \psi\left(\substack{i\left(2\pi n\over y \right)^{1\over N}e^{\pi i j \over N}}  \right)+\psi\left(\substack{-i\left(2\pi n\over y \right)^{1\over N}e^{\pi i j \over N}  }\right)\right\}.
		\end{align*}
		Finally let $\alpha=2^{-N}y$ and $\beta = 2\pi^{1+{1\over N}}y^{-\frac{1}{N}}$.  Multiplying both sides of the above equation by $\a^{-\left(\frac{2Nm-1}{N+1}-\frac{1}{2}\right)}$, apply Euler's formula \eqref{ef} and \eqref{fkny} and simplify to finally arrive at \eqref{gencompanion}.
				\end{proof}
			\subsection{A transformation for the series $\sum_{n=1}^{\infty}\sigma_{2Nm-1+N}^{(N)}(n)e^{-ny}$}\label{a=2Nm-1+N}
			
		 Here we first obtain the special case $a=2Nm-1+N$ of our general transformation in Theorem \ref{In terms of G}. We then discuss a corollary of this special case itself which is useful in obtaining the asymptotic estimate for the generating function of power partitions with ``$n^{2N-1}$ copies of $n^{N}$'' defined in \eqref{ppN} as $q\to1^{-}$.
			
		\begin{proof}[Theorem \textup{\ref{thm_resultt}}][]
				Suppose $z=\frac{4\pi^{N+1}n}{yN^N}$. Taking limit as $a\to 2Nm-1+N$ in Theorem \ref{In terms of G}, we get
				\begin{align}\label{Gen 2m 1}
					&\sum_{n=1}^\infty \sigma_{2Nm-1+N}^{(N)}(n) e^{-ny}- \frac{1}{y}\zeta(1-2Nm)	-\frac{\Gamma(2m+1)\z(2m+1)}{Ny^{2m+1}}\nonumber \\
					& ={N^{3/2}\over \pi^{5/2}}\left({2\pi \over y}\right)^{2m+1-{2\over N}} \sum_{n=1}^{\infty} \frac{S_{2Nm-1+N}^{(N)}(n)}{n^{2/N}}\lim_{a\to 2Nm-1+N}G_{1, \, \, 2N+1}^{N+1, \, \, 1} \left(\begin{matrix}
						\frac{1}{2} + \frac{1-a}{2N}\\
						\frac{1}{2} + \frac{1-a}{2N}, \langle \frac{i}{N}\rangle; \langle 1+\frac{3}{2N}-\frac{i}{N} \rangle
					\end{matrix} \Bigg | \frac{z^2}{4} \right).
				\end{align}	
			We evaluate the limit of right-hand side of \eqref{Gen 2m 1}. Using \eqref{Reduced Meijer G}, the series definition of ${}_1F_{2N}$ and \eqref{quotient gamma}, we have 
			\begin{align}\label{Gen 2m 2}
				G_{1, \, \, 2N+1}^{N+1, \, \, 1} \left(\begin{matrix}
					\frac{1}{2} + \frac{1-a}{2N}\\
					\frac{1}{2} + \frac{1-a}{2N}, \langle \frac{i}{N}\rangle; \langle 1+\frac{3}{2N}-\frac{i}{N} \rangle
				\end{matrix} \Bigg | \frac{z^2}{4} \right)
			=T_1^{*}+T_2^{*},
			\end{align}
		where 
		\begin{align}
			T_1^{*}&:=T_1^{*}(z, a, N):=\frac{(-1)^{N-1\over 2}\sqrt{\pi N}(2N)^{N-a-1}}{\G(N-a)\sin\left(\pi a\over 2 \right) }\left(z^2\over 4 \right)^{{1\over 2}+{1-a\over 2N}}\sum_{h=0}^{\infty}\frac{\prod_{i=1}^{2N}\G\left( {1\over 2}-{a\over 2N}+{i-1\over 2N}\right) }{\prod_{i=1}^{2N}\G\left( {1\over 2}-{a\over 2N}+h+{i-1\over 2N}\right) } \left(z^2\over 4 \right)^h\nonumber 
			\\
		&\hspace{2.5cm}\quad+\frac{\sqrt{\pi/N}\left(z^2\over 4 \right)^{1/N}}{\cos\left( \pi(a+1)\over 2N\right) }\sum_{h=0}^{\infty}\frac{\prod_{i=1}^{2N}\G\left( {1\over 2N}+{i-1\over 2N}\right) }{\prod_{i=1}^{2N}\G\left( {1\over 2N}+h+{i-1\over 2N}\right) } \left(z^2\over 4 \right)^h,\nonumber\\
			T_2^{*}	&:=T_2^{*}(z, a, N):=\sum_{j=2}^N \frac{(-1)^{j-1} (2N)^{2j-2} (\pi/N)^{1/2} \left(  {z^2\over 4}\right)^{j/N} }{\Gamma(2j-1)\cos \pi \left(\frac{j}{N}+ \frac{a-1}{2N} \right)} \,	\sum_{h=0}^{\infty}\frac{\prod_{i=1}^{2N}\G\left( {2j-1\over 2N}+{i-1\over 2N}\right) }{\prod_{i=1}^{2N}\G\left( {2j-1\over 2N}+h+{i-1\over 2N}\right) } \left(z^2\over 4 \right)^h.\nonumber
		\end{align}
		 Using Gauss multiplication formula \eqref{gmf} and the fact $\cos\left( \pi(a+1)\over 2\right)=-\sin\left( \pi a\over 2\right)$, we arrive at
			\begin{align}\label{T*}
				T_1^{*}&=\frac{(-1)^{N-1\over 2}\sqrt{\pi N}(2N)^{N-a-1}}{\sin\left(\pi a\over 2 \right) }\left(z^2\over 4 \right)^{{1\over 2}+{1-a\over 2N}}\left[ \sum_{h=0}^{m-1}+\sum_{h=m}^{\infty}\right] \frac{(2N)^{2Nh}}{\G\left(N-a+ 2Nh\right) } \left(z^2\over 4 \right)^h\nonumber 
				\\
				&-\frac{\sqrt{\pi/N}\left(z^2\over 4 \right)^{1/N}}{\sin\left( \pi a\over 2\right) }\frac{\cos\left( \pi(a+1)\over 2\right)}{\cos\left( \pi(a+1)\over 2N\right)}\sum_{h=0}^{\infty}\frac{(2N)^{2Nh} }{\G\left( {1+2Nh}\right) } \left(z^2\over 4 \right)^h.\nonumber \\
	&={(-1)^{N-1\over 2}\sqrt{\pi N}(2N)^{N-a-1}}\left(z^2\over 4 \right)^{{1\over 2}+{1-a\over 2N}}\sum_{h=0}^{m-1} \frac{(2N)^{2Nh}\left(z^2\over 4 \right)^h}{{\sin\left(\pi a\over 2 \right) }\G\left(N-a+ 2Nh\right) } \nonumber 
				\\
				&+\frac{(-1)^{N-1\over 2}\sqrt{\pi N}(2N)^{2Nm+N-a-1}}{\sin\left(\pi a\over 2 \right) }\left(z^2\over 4 \right)^{{2Nm+N-a+1\over 2N}} \sum_{h=0}^{\infty} \frac{(2N)^{2Nh}}{\G\left(2Nm+2Nh+N-a\right) } \left(z^2\over 4 \right)^h\nonumber 
				\\
				&-\frac{\sqrt{\pi/N}\left(z^2\over 4 \right)^{1/N}}{\sin\left( \pi a\over 2\right) }(-1)^{N-1\over 2}\sum_{j=-(N-1)}^{N-1}{\vphantom{\sum}}''i^je^{-\pi i j{(a+1)\over 2N}}\sum_{h=0}^{\infty}\frac{(2N)^{2Nh} }{\G\left( {1+2Nh}\right) } \left(z^2\over 4 \right)^h\nonumber\\
				&=:B_1+B_2-B_3,
			\end{align}
		where in the penultimate step we replaced $h$ by $h+m$ in the sum over $h$ from $m$ to $\infty$ and also applied Lemma \ref{cheby} to simplify the last expression.  
			Now 
			\begin{align*}
				\lim_{a\to 2Nm-1+N}B_1&={(-1)^{N-1\over 2}\sqrt{\pi N}(2N)^{-2Nm}}\left(z^2\over 4 \right)^{-m+{1\over N}}\sum_{h=0}^{m-1} {(2N)^{2Nh}\left(z^2\over 4 \right)^h}L_1,
			\end{align*}
			where $L_1=
			\lim\limits_{a\to 2Nm-1+N}{1\over {\sin\left(\pi a\over 2 \right) }\G\left(N-a+ 2Nh\right) }$. Replacing $a$ by $N-a+2Nh$ and using \eqref{usefullim}, we get 
			\begin{align*}
				L_1=\lim\limits_{a\to -(2(Nm-Nh-1)+1)}{1\over (-1)^{{N-1\over 2}+h}\cos\left(\pi a\over 2 \right)\G(a)}= (-1)^{{N-1\over 2}+h}\frac{2(2Nm-2Nh-1)!}{(-1)^{m-h}\pi}
			\end{align*}
			Hence, 
			\begin{align}\label{B1}
				\lim_{a\to 2Nm-1+N}B_1&=\frac{(-1)^m2\sqrt{\pi N}}{\pi (2N)^{2Nm}}\left(z^2\over 4 \right)^{-m+{1\over N}}\sum_{h=0}^{m-1} {(2N)^{2Nh}\left(z^2\over 4 \right)^h}\G(2Nm-2Nh).
			\end{align}
			Also, 
			\begin{align}
				B_2-B_3&=\frac{(-1)^{{N-1\over 2}}\sqrt{\pi N}}{\sin\left(\pi a\over 2 \right) }\left\lbrace  (2N)^{2Nm-1+N}\left(z^2\over 4 \right)^{m+{1\over 2}+{1\over 2N}}\sum_{h=0}^{\infty} \frac{(2N)^{2Nh}\left(z^2\over 4 \right)^h\left( 2N(\frac{z}{2})^{\frac{1}{N}}\right)^{-a} }{\G(2Nh+2Nm+N-a)}\right. \nonumber 
				\\
				&\qquad\qquad\qquad\qquad\quad\left. -{1\over N}\left(z^2\over 4 \right)^{1/N}\sum_{h=0}^{\infty}\frac{(2N)^{2Nh} }{\G\left( {1+2Nh}\right) } \left(z^2\over 4 \right)^h\sum_{j=-\frac{\left(N-1\right)}{2} }^{N-1\over 2 } (-1)^je^{-{\pi i j \over N}(a+1)}\right\rbrace,
			\end{align}
		Note that for $a=2Nm-1+N$, the expression inside the curly braces is zero. Thus $B_2-B_3$ takes the form $0/0$. Consequently, by L'Hopital's rule,
			\begin{align}\label{B2-B3}
				&\lim_{a\to 2Nm-1+N}(B_2-B_3)\nonumber 
				\\
				&=\lim_{a\to 2Nm-1+N}\frac{2(-1)^{{N-1\over 2}}\sqrt{\pi N}}{\pi\cos\left(\pi a\over 2 \right)} \left\lbrace  (2N)^{2Nm-1+N}\left(z^2\over 4 \right)^{m+{1\over 2}+{1\over 2N}}\right.\nonumber 
				\\
				&\quad\times\sum_{h=0}^{\infty}\frac{(2N)^{2Nh}\left(z^2\over 4 \right)^h\left[-\G(2Nh+2Nm+N-a)\log\left( 2N(\frac{z}{2})^{\frac{1}{N}}\right)+\G'(2Nh+2Nm+N-a) \right] }{\left( 2N(\frac{z}{2})^{\frac{1}{N}}\right)^{a}\G^2(2Nh+2Nm+N-a)}  \nonumber 
				\\
				&\quad\left. -{1\over N}\left(z^2\over 4 \right)^{1/N}\sum_{h=0}^{\infty}\frac{(2N)^{2Nh} }{\G\left( {1+2Nh}\right) } \left(z^2\over 4 \right)^h\sum_{j=-\frac{\left(N-1\right)}{2}}^{N-1\over 2 } (-1)^{j+1}{\pi i j \over N}e^{-{\pi i j \over N}(a+1)}\right\rbrace \nonumber 
				\\
				&=	\frac{2(-1)^{{N-1\over 2}}\sqrt{\pi N}}{\pi(-1)^{m+{N-1\over 2}}}\left\lbrace(2N)^{2Nm-1+N}\left(z^2\over 4 \right)^{m+{1\over 2}+{1\over 2N}}\sum_{h=0}^{\infty}\frac{(2N)^{2Nh}\left(z^2\over 4 \right)^h\left[\psi(2Nh+1)-\log\left( 2N(\frac{z}{2})^{\frac{1}{N}}\right)\right]}{\left( 2N(\frac{z}{2})^{\frac{1}{N}}\right)^{2Nm-1+N}\G(2Nh+1)}  \right. \nonumber 
				\\
				&\quad+\left. {\pi i\over N^2}\left(z^2\over 4 \right)^{1/N}\sum_{h=0}^{\infty}\frac{(2N)^{2Nh} }{\G\left( {1+2Nh}\right) } \left(z^2\over 4 \right)^h\sum_{j=-\frac{\left(N-1\right)}{2}}^{N-1\over 2 } (-1)^{j} j e^{-{\pi i j }(2m+1)}\right\rbrace\nonumber\\
%			\end{align}
%			Using the fact $e^{-2\pi i j m}=1$ and $e^{i\pi}=-1$, we find that the last term of above expression is zero. Consequently using \eqref{Generalized Dixon-Ferror identity odd} we deduce 
		%	\begin{align}
				&=2(-1)^m\sqrt{N\over \pi}\left( z\over 2\right)^{2\over N}\sum_{h=0}^{\infty}\frac{\psi(2Nh+1)-\log\left( 2N(\frac{z}{2})^{\frac{1}{N}}\right)}{\G(2Nh+1)}  \left( 2N\left(\frac{z}{2}\right)^{\frac{1}{N}}\right)^{2Nh}\nonumber 
				\\
				&=(-1)^m{2\over \sqrt{\pi N}}\left( z\over 2\right)^{2\over N}\left[ \sum_{k=-{\left(N-1 \right) \over 2} }^{N-1\over 2 }\left( \sinh\operatorname{Shi}-\cosh\operatorname{Chi}\right) \left(  2N\left(\frac{z}{2}\right)^{\frac{1}{N}}e^{\pi i k \over N}\right)\right.\nonumber 
				\\
				& \qquad\qquad\qquad\qquad\qquad\left.  +{\pi \over 2}\sum_{j=1}^{N-1}\frac{(-1)^j}{\sin\left( \pi j \over N\right) }\sum_{h=0}^{\infty} \frac{\left(  2N(\frac{z}{2})^{\frac{1}{N}}\right)^{2Nh+2j} }{(2Nh+2j)!}\right],
			\end{align}
		where in the penultimate step, we used the fact that $\sum\limits_{j=-\frac{\left(N-1\right)}{2}}^{N-1\over 2 }j=0$, and in the last step, \eqref{Generalized Dixon-Ferror identity odd}.
		
		Now letting $a\to2Nm-1+N$ in \eqref{T*} and then inserting \eqref{B1}, \eqref{B2-B3} in it leads to
		\begin{align}\label{limit T*}
\lim_{a\to2Nm-1+N}T_1^{*}&=\frac{(-1)^m2\sqrt{\pi N}}{\pi (2N)^{2Nm}}\left(z^2\over 4 \right)^{-m+{1\over N}}\sum_{h=0}^{m-1} {(2N)^{2Nh}\left(z^2\over 4 \right)^h}\G(2Nm-2Nh)\nonumber\\
&\quad+(-1)^m{2\over \sqrt{\pi N}}\left( z\over 2\right)^{2\over N}\left[ \sum_{k=-{\left(N-1 \right) \over 2} }^{N-1\over 2 }\left( \sinh\operatorname{Shi}-\cosh\operatorname{Chi}\right) \left(  2N\left(\frac{z}{2}\right)^{\frac{1}{N}}e^{\pi i k \over N}\right)\right.\nonumber 
\\
& \qquad\qquad\qquad\qquad\qquad\left.  +{\pi \over 2}\sum_{j=1}^{N-1}\frac{(-1)^j}{\sin\left( \pi j \over N\right) }\sum_{h=0}^{\infty} \frac{\left(  2N\left(\frac{z}{2}\right)^{\frac{1}{N}}\right)^{2Nh+2j} }{(2Nh+2j)!}\right].
		\end{align}
	We now evaluate $\lim_{a\to2Nm-1+N}T_2^{*}$. To that end, use Gauss multiplication formula \eqref{gmf} in the numerator as well as the denominator of $T_2^{*}$, then replace $j$ by $j+1$, and use the fact that $\cos \left(\pi \left(\frac{j}{N}+ \frac{a-1}{2N} \right)\right)=(-1)^{m+1}\sin\left(\frac{\pi j}{n}\right)$ when $a=2Nm-1+N$, to find that
 \begin{align}\label{limit T2*}
 	\lim_{a\to2Nm-1+N}T_2^{*}=(-1)^{m+1}\sqrt{\frac{\pi}{
 	N}}\left(\frac{z}{2}\right)^{2\over N}\sum_{j=1}^{N-1}\frac{(-1)^j}{\sin\left( \pi j \over N\right) }\sum_{h=0}^{\infty} \frac{\left(  2N\left(\frac{z}{2}\right)^{\frac{1}{N}}\right)^{2Nh+2j} }{(2Nh+2j)!}.
 \end{align}
		
		Now letting $a\to2Nm-1+N$ in \eqref{Gen 2m 2}, inserting \eqref{limit T*} and \eqref{limit T2*} into it, and observing that the finite sums over $j$ completely cancel out, we arrive at
		\begin{align*}
			&\lim_{a\to 2Nm-1+N}G_{1, \, \, 2N+1}^{N+1, \, \, 1} \left(\begin{matrix}
				\frac{1}{2} + \frac{1-a}{2N}\\
				\frac{1}{2} + \frac{1-a}{2N}, \langle \frac{i}{N}\rangle; \langle 1+\frac{3}{2N}-\frac{i}{N} \rangle
			\end{matrix} \Bigg | \frac{z^2}{4} \right)\nonumber\\
		&=\frac{(-1)^m2\sqrt{\pi N}}{\pi (2N)^{2Nm}}\left(z^2\over 4 \right)^{-m+{1\over N}}\sum_{h=0}^{m-1} {(2N)^{2Nh}\left(z^2\over 4 \right)^h}\G(2Nm-2Nh)\nonumber\\
		&\quad+(-1)^m{2\over \sqrt{\pi N}}\left( z\over 2\right)^{2\over N}\sum_{k=-{\left(N-1 \right)\over 2 }}^{N-1\over 2 }\left( \sinh\operatorname{Shi}-\cosh\operatorname{Chi}\right) \left(   2N\left(\frac{z}{2}\right)^{\frac{1}{N}}e^{\pi i k \over N}\right)\nonumber\\
				&=\frac{2(-1)^m}{\sqrt{\pi}}\Bigg[\frac{1}{N^{3/2}}\left({2\pi^{N+1}n\over y}\right)^{\frac{2}{N}}\sum_{j=1}^{m} (2Nj-1)!\left({(2\pi)^{N+1}n\over y}\right)^{-2j}\nonumber\\
				&\quad+\frac{1}{N^{5/2}}\left({2\pi^{N+1}n\over y}\right)^{\frac{2}{N}}\sum_{k=-{\left(N-1 \right)\over 2 }}^{N-1\over 2 }\left( \sinh\operatorname{Shi}-\cosh\operatorname{Chi}\right) \left(  \left({(2\pi)^{N+1}n\over y}\right)^{1\over N}e^{\pi i k \over N}\right)\Bigg],
				\end{align*}
		where in the last step, we replaced $h$ by $m-j$ and used the fact that $z={4\pi^{N+1}n\over yN^{N}}$.
		Substituting the above limit evaluation in \eqref{Gen 2m 1}, we arrive at \eqref{resultt}.
			\end{proof}
		Letting $m=1$ in Theorem \ref{thm_resultt} leads to
		\begin{corollary}\label{resulttm=1cor}
			Let $N$ be an odd positive number. For a positive integer $m$,
			\begin{align*}
				&\sum_{n=1}^\infty \sigma_{3N-1}^{(N)}(n) e^{-ny}+ \frac{B_{2N}}{2Ny}	-\frac{2\z(3)}{Ny^{3}}\nonumber \\
				& =\frac{-16\pi^2}{y^3}\sum_{n=1}^{\infty} S_{3N-1}^{(N)}(n)\Bigg[ \frac{(2N-1)!y^2}{n^2(2\pi)^{2N+2}}+
				\frac{1}{N}\sum_{k=-{\left(N-1 \right)\over 2 }}^{N-1\over 2 }\left( \sinh\operatorname{Shi}-\cosh\operatorname{Chi}\right) \left(  \left({(2\pi)^{N+1}n\over y}\right)^{1\over N}e^{\pi i k \over N}\right)\Bigg].
			\end{align*}
		\end{corollary}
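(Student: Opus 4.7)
The plan is to obtain this corollary as a direct specialization of Theorem \ref{thm_resultt} at $m=1$, with no additional ingredients required. Since Theorem \ref{thm_resultt} is already proved (for every positive integer $m$ and every odd positive integer $N$), one only needs to set $m=1$ and simplify each term.

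First I would rewrite the left-hand side of \eqref{resultt}. The exponent in the divisor sum becomes $2N(1)-1+N = 3N-1$, and the Bernoulli/zeta correction terms collapse to $\frac{B_{2N}}{2Ny}$ and $-\frac{(2)!\,\zeta(3)}{Ny^{3}} = -\frac{2\zeta(3)}{Ny^{3}}$, which match the stated left-hand side of the corollary exactly. Next I would simplify the prefactor on the right-hand side: $\frac{2(-1)^{1}}{\pi}\bigl(\frac{2\pi}{y}\bigr)^{2(1)+1} = -\frac{2}{\pi}\cdot\frac{8\pi^{3}}{y^{3}} = -\frac{16\pi^{2}}{y^{3}}$, matching the stated prefactor. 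Finally, the finite sum $\sum_{j=1}^{m}(2Nj-1)!\bigl(\frac{(2\pi)^{N+1}n}{y}\bigr)^{-2j}$ collapses at $m=1$ to the single term $(2N-1)!\bigl(\frac{(2\pi)^{N+1}n}{y}\bigr)^{-2} = \frac{(2N-1)!\,y^{2}}{(2\pi)^{2N+2}n^{2}}$, which is precisely the algebraic summand appearing first in the square brackets of the corollary. The inner sum of $(\sinh\operatorname{Shi}-\cosh\operatorname{Chi})$ over $k$ carries over verbatim.

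There is no real obstacle here: the corollary is a mechanical specialization, and every manipulation above is an algebraic identity between closed-form expressions. The only thing to be careful about is bookkeeping of the factors of $2\pi$ and of $y$ when combining $(2\pi/y)^{3}$ with $\bigl((2\pi)^{N+1}n/y\bigr)^{-2}$ to produce the $(2\pi)^{2N+2}$ in the denominator of the rational summand; this is a one-line check. Hence the proof reduces to citing Theorem \ref{thm_resultt} and substituting $m=1$.
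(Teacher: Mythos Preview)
Your proposal is correct and follows exactly the same route as the paper: the paper states that Corollary \ref{resulttm=1cor} is obtained by letting $m=1$ in Theorem \ref{thm_resultt}, and your term-by-term verification of the specialization is precisely what is needed.
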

%		When $N=1$, the left-hand side of the above result consists of the series $\sum_{n=1}^\infty \sigma_{2}(n)e^{-ny}$ which is connected with the generating function for plane partitions considered by MacMahon \cite[p.~184]{gea}. Indeed, if $y=\log(1/q)$, we have
%		$\displaystyle\sum_{n=1}^\infty \sigma_{2}(n)e^{-ny}=q\frac{d}{dq}\log F(q)$, where $F(q):=\displaystyle\prod_{n=1}^{\infty}\frac{1}{(1-q^n)^n}$, the generating function of plane partitions. The form of the generating function at once shows that the number of plane partitions of a positive integer is equal to the number of partitions of that integer with ``$n$ copies of $n$''.  [ADD RELEVANT REFERENCES.]
%		
%		This now raises an important question - what is the infinite product connected with the more general series $\sum_{n=1}^\infty \sigma_{3N-1}^{(N)}(n) e^{-ny}$, and what partitions does it generate?
%		
%		To that end, note that 

\section{Asymptotics of generalized power partitions with ``$n^{2N-1}$ copies of $n^{N}$''}\label{asymptotics}
		
		\begin{proof}[Theorem \textup{\ref{Asym of Sum Sigma}}][]
	Let $-\frac{N-1}{2}\leq k \leq \frac{N-1}{2}$, where $N$ is an odd natural number, and $w$ be a positive real number to be chosen later. It follows from \cite[Theorem 1.10]{dk03} that
	\begin{align}\label{RNm expr}
		R_{Nm}\left(1, we^{\frac{\pi i k}{N}} \right)=(-1)^{Nm}\left(\left( \sinh\operatorname{Shi}-\cosh\operatorname{Chi}\right)\left(w e^{\frac{\pi i k }{N}} \right) + \sum_{j=1}^{Nm} \frac{(2j-1)!}{w^{2j}e^{\frac{2\pi i j k}{N}}}\right)
	\end{align}
	We also have from \cite[Lemma 2.1]{dk03} that for $w>0$,
	\begin{align}\label{RNm asymp}
		R_{Nm}\left(1, we^{\frac{\pi i k}{N}} \right) &\sim -\frac{\cos (\pi Nm)}{w^{2Nm} e^{2\pi ik m}} \sum_{j=1}^\infty \frac{\Gamma(2Nm+2j)}{w^{2j}e^{\frac{2\pi i j k}{N}}}\nonumber\\
		&= \frac{(-1)^{m+1}}{w^{2Nm}}\sum_{j=1}^\infty \frac{\Gamma(2Nm+2j)}{w^{2j}e^{\frac{2\pi i j k}{N}}}.
	\end{align}
	Thus, \eqref{RNm expr} and \eqref{RNm asymp} together imply
	\begin{align*}
%		&(-1)^{m}\frac{1}{N}\sum_{k=-\frac{N-1}{2}}^{\frac{N-1}{2}}\left(\left( \sinh\operatorname{Shi}-\cosh\operatorname{Chi}\right)\left(w e^{\frac{\pi i k }{N}} \right) + \sum_{j=1}^{Nm} \frac{(2j-1)!}{w^{2j}e^{\frac{2\pi i j k}{N}}}\right)\nonumber \\ 
%		& \sim \frac{1}{N}\sum_{k=-\frac{N-1}{2}}^{\frac{N-1}{2}} \frac{(-1)^{m+1}}{w^{2Nm}}\sum_{j=1}^\infty \frac{\Gamma(2Nm+2j)}{w^{2j}e^{\frac{2\pi i j k}{N}}}\nonumber\\
%		\implies 
		& \frac{1}{N}\sum_{k=-\frac{N-1}{2}}^{\frac{N-1}{2}} \left( \sinh\operatorname{Shi}-\cosh\operatorname{Chi}\right)\left(w e^{\frac{\pi i k }{N}} \right) + \sum_{j=1}^{Nm} \frac{(2j-1)!}{w^{2j}} \left(\frac{1}{N}\sum_{k=-\frac{N-1}{2}}^{\frac{N-1}{2}} e^{-\frac{2\pi i j k}{N}}\right) \nonumber\\
		& \sim -\frac{1}{w^{2Nm}} \sum_{j=1}^\infty \frac{\Gamma(2Nm+2j)}{w^{2j}}\left(\frac{1}{N}\sum_{k=-\frac{N-1}{2}}^{\frac{N-1}{2}} e^{-\frac{2\pi i j k}{N}}\right).
	\end{align*}
	We next employ \eqref{Orthogonal character0} on both sides of the above equation to obtain
	\begin{align*}
		\frac{1}{N}\sum_{k=-\frac{N-1}{2}}^{\frac{N-1}{2}} \left( \sinh\operatorname{Shi}-\cosh\operatorname{Chi}\right)\left(w e^{\frac{\pi i k }{N}} \right) + \sum_{\substack{j=1 \\ N\mid j}}^{Nm} \frac{(2j-1)!}{w^{2j}}  
		\sim -\frac{1}{w^{2Nm}} \sum_{\substack{j=1 \\ N\mid j}}^\infty \frac{\Gamma(2Nm+2j)}{w^{2j}}.
	\end{align*}
	Substitute $j$ by $Nj$ in the above equation to obtain
	\begin{align*}
		\frac{1}{N}\hspace{-2.5mm}\sum_{k=-\frac{N-1}{2}}^{\frac{N-1}{2}} \left( \sinh\operatorname{Shi}-\cosh\operatorname{Chi}\right)\left(w e^{\frac{\pi i k }{N}} \right) + \sum_{j=1}^{m} \frac{(2Nj-1)!}{w^{2N j}}  
		%&\sim -\frac{1}{w^{2Nm}} \sum_{j=1}^\infty \frac{\Gamma(2Nm+2Nj)}{w^{2Nj}}\nonumber\\
		= \frac{-1}{w^{2Nm}} \sum_{j=1}^{r+1} \frac{\Gamma(2Nm+2Nj)}{w^{2Nj}} + O\left(w^{-2Nm-2Nr-4N}\right).
	\end{align*}
	Thus, if we let $w = \left(\frac{(2\pi)^{N+1}n}{y} \right)^{1/N}$, we have
	\begin{align}\label{Asymptotic of sinhshi-coshchi}
		&\frac{1}{N}\sum_{k=-\frac{N-1}{2}}^{\frac{N-1}{2}} \left( \sinh\operatorname{Shi}-\cosh\operatorname{Chi}\right)\left(\left(\frac{(2\pi)^{N+1}n}{y} \right)^{\frac{1}{N}} e^{\frac{\pi i k }{N}} \right) + \sum_{j=1}^{m} \frac{(2Nj-1)! y^{2j}}{(2\pi)^{(N+1)(2j)}n^{2j}} \nonumber\\ 
		&= -\frac{y^{2m}}{(2\pi)^{(N+1)(2m)}n^{2m}} \sum_{j=1}^{r+1} \frac{\Gamma(2Nm+2Nj) }{(2\pi)^{(N+1)(2j)}n^{2j}} \, y^{2j}+ O\left(\frac{y^{2m+2r+4}}{n^{2m+2r+4}}\right).
	\end{align}
	Invoking \eqref{Asymptotic of sinhshi-coshchi} into Theorem \ref{thm_resultt}, we obtain
	\begin{align*}
		\sum_{n=1}^\infty \sigma_{2Nm-1+N}^{(N)}(n) e^{-ny}
		&=\frac{(2m)!\z(2m+1)}{Ny^{2m+1}} - \frac{B_{2Nm}}{2Nmy}	- \frac{2(-1)^m}{\pi}\left({2\pi \over y}\right)^{2m+1} \frac{y^{2m}}{\left((2\pi)^{N+1}\right)^{2m}} \nonumber\\
		&\quad \times \sum_{n=1}^{\infty} \frac{S_{2Nm-1+N}^{(N)}(n)}{n^{2m}}\sum_{j=1}^{r+1} \frac{\Gamma(2Nm+2Nj) }{(2\pi)^{(N+1)(2j)}n^{2j}} \, y^{2j}+ O\left(y^{2m+2r+4}\right)
		\nonumber\\
%		&= \frac{(2m)!\z(2m+1)}{Ny^{2m+1}} - \frac{B_{2Nm}}{2Nmy}	- \frac{4(-1)^m}{(2\pi)^{2Nm}y} \sum_{j=1}^{r+1} \frac{\Gamma(2Nm+2Nj) }{(2\pi)^{(N+1)(2j)}n^{2j}} \, y^{2j} \sum_{n=1}^{\infty} \frac{S_{2Nm-1+N}^{(N)}(n)}{n^{2m+2j}} \nonumber\\
%		&\quad +O\left(y^{2r+3}\right)
&= \frac{(2m)!\z(2m+1)}{Ny^{2m+1}} - \frac{B_{2Nm}}{2Nmy} - \frac{4(-1)^m}{(2\pi)^{2Nm}} \sum_{j=1}^{r+1} \frac{\Gamma(2Nm+2Nj) \zeta(2Nm+2Nj) \zeta(2j)}{(2\pi)^{(N+1)(2j)}}y^{2j-1}\nonumber\\
&\quad+O\left(y^{2r+3}\right),
	\end{align*}
where in the last step, we used \eqref{Sdirichlet}.
%	We next apply the fact that for $\min \{\re (2Nm+2Nj), \re (2j) \} >1$,
%	\begin{equation}
%		\sum_{n=1}^{\infty} \frac{S_{2Nm-1+N}^{(N)}(n)}{n^{2m+2j}} = \zeta(2Nm+2Nj) \zeta(2j),
%	\end{equation}
%	in the above equation to conclude
%	\begin{align*}
%		&\sum_{n=1}^\infty \sigma_{2Nm-1+N}^{(N)}(n) e^{-ny} \nonumber\\
%		&= \frac{(2m)!\z(2m+1)}{Ny^{2m+1}} - \frac{B_{2Nm}}{2Nmy} - \frac{4(-1)^m}{(2\pi)^{2Nm}} \sum_{j=1}^{r+1} \frac{\Gamma(2Nm+2Nj) \zeta(2Nm+2Nj) \zeta(2j)}{(2\pi)^{(N+1)(2j)}} \, y^{2j-1} +O\left(y^{2r+3}\right).
%	\end{align*}
	This completes the proof.
	\end{proof}
We now obtain the asymptotic behavior of the infinite product $F_N(q)$ defined in \eqref{ppN}.
\begin{proof}[Corollary \textup{\ref{asym F_N(q)}}][]
Letting $m=1$, $y = \log \left(\frac{1}{q} \right), |q|<1$ in Theorem \ref{Asym of Sum Sigma} and using \eqref{gpp3N-1}, we obtain
\begin{align*}
	q \frac{d}{dq}\left( \log F(q) \right) = - \frac{2 \zeta(3)}{N(\log q)^3} + \frac{B_{2N}}{2N \log q} 
	&- \frac{4}{(2\pi)^{2N}} \sum_{j=1}^{r+1} \frac{\Gamma(2N + 2Nj) \zeta(2N + 2Nj) \zeta(2j)}{\left((2\pi)^{N+1} \right)^{2j}}(\log q)^{2j-1} 
	\nonumber\\
	&\hspace{5cm}+ O\left(- (\log q)^{2r+3} \right).
\end{align*}
Next divide both sides of the above equation by $q$ and then integrate with respect to $q$ so as to get
\begin{align*}
	\log F(q) &= c + \frac{\zeta(3)}{N(\log q)^2} + \frac{B_{2N}}{2N}\log \log q 
	- \frac{4}{(2\pi)^{2N}} \sum_{j=1}^{r+1} \frac{\Gamma(2N + 2Nj) \zeta(2N + 2Nj) \zeta(2j)}{2j \left((2\pi)^{N+1} \right)^{2j}}(\log q)^{2j}
	\nonumber\\
	&\quad+ O\left((\log q)^{2r+4} \right).
\end{align*}
Finally exponentiating both sides of the above equation, we are led to the desired estimate.
	\end{proof}
		\section{Special cases of Theorem \ref{Analytic continuation} for $N$ even}\label{scace}
		
			\subsection{A generalization of Wigert's formula} In this subsection, we show that the following formula, which was first obtained in \cite[Theorem 1.5]{dixitmaji1}, can be derived from Theorem \ref{Analytic continuation}.
		
		\begin{corollary}\label{neven}
			Let $N$ be an even positive integer and $m$ be \emph{any} integer. For any $\a,\b>0$ satisfying $\a\b^N=\pi^{N+1}$,
			\begin{align}\label{neveneqn}
				&\a^{-\left(\frac{2Nm-1}{N+1}\right)}\left(\frac{1}{2}\zeta(2Nm)+\sum_{n=1}^{\infty}\frac{n^{-2Nm}}{\exp{\left((2n)^{N}\a\right)}-1}\right)\nonumber\\
				&=\b^{-\left(\frac{2Nm-1}{N+1}\right)}\frac{(-1)^m}{N}2^{(N-1)\left(2m-\frac{1}{N}\right)}\Bigg(\frac{\zeta\left(2m+1-\frac{1}{N}\right)}{2\cos\left(\frac{\pi}{2N}\right)}\nonumber\\
				&\quad-2(-1)^{\frac{N}{2}}\sum_{j=0}^{\frac{N}{2}-1}(-1)^j \sum_{n=1}^{\infty}\frac{1}{n^{2m+1-\frac{1}{N}}}\im\Bigg(\frac{e^{\frac{i\pi(2j+1)}{2N}}}{\exp{\left((2n)^{\frac{1}{N}}\b e^{\frac{i\pi(2j+1)}{2N}}\right)}-1}\Bigg)\Bigg)\nonumber\\
				&\quad+(-1)^{\frac{N}{2}+1}2^{2Nm-1}\sum_{j=0}^{m}\frac{B_{2j}B_{(2m+1-2j)N}}{(2j)!((2m+1-2j)N)!}\a^{\frac{2j}{N+1}}\b^{N+\frac{2N^2(m-j)-N}{N+1}}.
			\end{align}
		\end{corollary}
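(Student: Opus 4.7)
[]
The plan is to specialize Theorem \ref{Analytic continuation} to $a = -2Nm$ for $N$ even and then simplify, following the template laid out in the proof of Theorem \ref{zetagen3}. Note first that $\sum_{n=1}^{\infty} n^{-2Nm}/(e^{(2n)^{N}\alpha}-1) = \sum_{n=1}^{\infty}\sigma_{-2Nm}^{(N)}(n)e^{-ny}$ with $y=2^{N}\alpha$, so the substitution is natural. Setting $a \to -2Nm$ in \eqref{Eqn:Analytic continuation} produces three pieces: (i) the finite ``Bernoulli'' sum $\tfrac{y}{2\pi^{2}}\sum_{k=0}^{m}(-y^{2}/4\pi^{2})^{k}\zeta(N(2m-2k-1))\zeta(2k+2)$, (ii) the limiting value of the $\Gamma\zeta/y^{(1+a)/N}$ term, which reproduces the $\zeta(2m+1-1/N)/\cos(\pi/(2N))$ contribution via \eqref{zetafe} and \eqref{ef}, and (iii) the main infinite series in ${}_{\frac{1}{2}}K_{a/(2N)}^{(N)}$ minus the $C_{m,N}$ correction.

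The heart of the calculation is to evaluate the expression inside the square brackets of \eqref{Eqn:Analytic continuation} in the limit $a\to -2Nm$. One invokes Theorem \ref{meijergsim} with its \emph{even}-$N$ form of $A_{a,N}(z)$ given in \eqref{aanzeven}, so that ${}_{\frac{1}{2}}K_{-m}^{(N)}(4\pi^{N+1}n/(yN^{N}),0)$ decomposes into the $\Gamma(\tfrac{1-N-2Nm}{2})/\Gamma(\tfrac{N+2Nm}{2})$-multiple of a ${}_1F_{2N}$, plus the finite exponential sum $\mathscr{E}_{-2Nm,N,Z}(k,1)$ over $0 \le k \le 2N-1$. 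Applying \eqref{prudnikovid} to that ${}_1F_{2N}$ converts it into a sum of exponentials $\exp(2Ne^{i\pi(2k+1)/(2N)}Z^{1/(2N)})$ minus a polynomial tail. As in the proof of Theorem \ref{zetagen3}, one then verifies that the $2N$ full-range exponential sum from $A_{a,N}$ and from the hypergeometric reduction combine so as to leave \emph{only} the $N$ indices $k = N/2,\ldots,3N/2-1$ (the ones for which $\cos(\pi(2k+1)/(2N))<0$), while simultaneously the polynomial tail is precisely cancelled by the $C_{m,N}$ contribution (this cancellation uses \eqref{usefullim} at the shifted values $N+a+1+2Nk$ and the identity $\sin(\tfrac{\pi}{2}(N-a))=(-1)^{N/2}\cos(\tfrac{\pi a}{2})$).

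After this cancellation, summing the resulting geometric series $\sum_{d_{1}=1}^{\infty}\exp(-2\pi d_{1} e^{i\pi(2k+1)/(2N)}(2\pi n/y)^{1/N})^{d_{1}}$, which converges because the real part of the exponent is positive for these $k$, produces a double series of Lambert type. Re-indexing $k = j + N/2$, $0 \le j \le N/2-1$, and pairing $j$ with its complex conjugate index (both fall in the retained range), each complex pair combines into $-2\,\im\left(e^{i\pi(2j+1)/(2N)}/(\exp((2n)^{1/N}\beta e^{i\pi(2j+1)/(2N)})-1)\right)$, which is exactly the Lambert-type series appearing on the right-hand side of \eqref{neveneqn}. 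Simultaneously, the finite Bernoulli sum from (i) is rewritten via \eqref{zetafe} into $B_{(2m+1-2j)N}B_{2j}$-terms under the change of variables $\alpha=2^{-N}y$, $\beta = 2\pi^{1+1/N}y^{-1/N}$ (so that $\alpha\beta^{N}=\pi^{N+1}$). Multiplying the resulting identity by $\alpha^{-(2Nm-1)/(N+1)}$ and using Euler's formula \eqref{ef} to convert the two surviving $\zeta$-values at positive even arguments into Bernoulli numbers yields \eqref{neveneqn}.

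The main obstacle will be the bookkeeping in the cancellation step: as $a\to -2Nm$, several sub-expressions inside ${}_{\frac{1}{2}}K_{a/(2N)}^{(N)}$ as well as in $C_{m,N}$ blow up or vanish, and one needs both the \emph{exact} finite-sum identity (between the hypergeometric tail and $C_{m,N}$) and the \emph{exact} identification of the retained $N$ exponential indices via $\mathscr{E}_{a,N,z}(k,1)$. Unlike the odd-$N$ case where the sign coefficients in \eqref{aanzodd} combine straightforwardly, the even-$N$ sign pattern $(+,-,+)$ in \eqref{aanzeven} together with the half-integer shift $b=1$ forces the use of the orthogonality relation \eqref{Orthogonal character even} to correctly fold the $2N$ exponential terms into the claimed $N/2$ imaginary parts; getting the signs $(-1)^{j+N/2}$ to line up with the stated form is where the bulk of the technical work lies.
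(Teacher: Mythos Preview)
Your proposal is correct and follows essentially the same approach as the paper: specialize Theorem \ref{Analytic continuation} at $a=-2Nm$, use the even-$N$ form of Theorem \ref{meijergsim} together with \eqref{prudnikovid} to reduce ${}_{\frac12}K^{(N)}_{-m}$, verify that the polynomial tail cancels exactly against the $C_{m,N}$ term (as in \eqref{cancel}), retain only the $N$ exponential indices $k=\tfrac{N}{2},\ldots,\tfrac{3N}{2}-1$, sum the resulting geometric series, and then pair complex-conjugate terms to produce the imaginary parts before substituting $\alpha=2^{-N}y$, $\beta=2\pi^{1+1/N}y^{-1/N}$. One small remark: the paper carries out the ``folding into imaginary parts'' by the direct re-indexing $k\mapsto k+N$ followed by $k\mapsto -k-1$ on one half of the range, without invoking \eqref{Orthogonal character even}; your anticipated use of that orthogonality relation is unnecessary here, though it would not be wrong.
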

		\begin{proof}
			We will use the fact without mention that $N$ is an even positive integer. Letting $a\to-2Nm, m\in\mathbb{Z}$, in Theorem \ref{Analytic continuation}, we get
			\begin{align}\label{Eqn:Analytic continuationwig}
				&\sum_{n=1}^\infty \sigma_{-2Nm}^{(N)}(n) e^{-ny}  + \frac{1}{2}\zeta(2Nm) - \frac{1}{y}\zeta(N+2Nm) - \frac{y^{2m-1/N}}{N} \Gamma \left(\frac{1}{N}-2m\right) \zeta \left(\frac{1}{N}-2m\right)\nonumber\\
				&= \hspace{-5pt}\lim_{a\to-2Nm}\tfrac{2 (2\pi)^{\frac{1}{N}-\frac{1}{2}} N^{\frac{a-1}{2}}}{y^{\frac{1}{N}+\frac{a}{2N}}} \sum_{n=1}^\infty \tfrac{S_a^{(N)}(n)}{n^{\frac{a}{2N}}} \bigg [{}_{\frac{1}{2}} K_{\frac{a}{2N}}^{(N)}\left(\substack{\tfrac{4\pi^{N+1}n}{yN^N}, 0}\right) - \tfrac{2^{\frac{1}{2} + \frac{a+1}{N}} \pi^{\frac{(1-N)a}{2N} -N}} {\left(\tfrac{4\pi^{N+1}n}{yN^N}\right)^{1+\frac{1}{N}+\frac{a}{2N}}}   \tfrac{\sin\(\frac{\pi}{2}(N-a)\)}{2^{N-1}}C_{m,N}\left(\substack{\tfrac{1}{2}, \tfrac{a}{2N}, 0, \tfrac{4\pi^{N+1}n}{yN^N}}\right)\bigg]\nonumber\\
				&\quad+ \lim_{a\to-2Nm}\frac{y}{2\pi^{2}} \sum_{k=0}^m \left(-\frac{y^2}{4\pi^2}\right)^k   \zeta(-2kN-N-a)\zeta(2k+2).
			\end{align}
			The details are curbed for the part of the proof which is similar to that of Theorem \ref{zetagen3} whereas we provide all of the details for the part involving new analysis. 
			
			We begin with the simplification of the first expression in the square brackets on the right-hand side of \eqref{Eqn:Analytic continuationwig}. To that end, using \eqref{muknug} and Theorem \ref{meijergsim}, we have
			\begin{align}\label{Kwig}
				{}_{\frac{1}{2}} K_{\frac{a}{2N}}^{(N)}\left(\tfrac{4\pi^{N+1}n}{yN^N}, 0\right)& =\frac{(2\pi)^{a-2\over N}}{\pi^2 \sqrt{2}}\frac{(n/y)^{{a\over 2N}-{2\over N}}}{N^{{a\over 2}-2}} \Bigg(\frac{ N^{N-a - \frac{1}{2}} \Gamma\left(\frac{1-N+a}{2}\right)\left(\frac{4\pi^{2N+2}n^2}{y^2N^{2N}}\right)^{\frac{1}{2}+\frac{1-a}{2N}} }{\Gamma\left(\frac{N-a}{2}\right)}\nonumber\\
				&\quad\times{}_1F_{2N}\left( \begin{matrix}
					1 \\ \left\langle \frac{1}{2} - \frac{a+1}{2N} + \frac{i}{2N} \right\rangle_{i=1}^{2N}
				\end{matrix}\right| -\frac{4\pi^{2N+2}n^2}{y^2N^{2N}} \Bigg)+ A_{a, N}\left(\frac{4\pi^{2N+2}n^2}{y^2N^{2N}}\right)\Bigg),
			\end{align}
			where $A_{a, N}(z)$ is defined in \eqref{aanzeven}. 
			
			Letting $a\to-2Nm$ and employing \eqref{prudnikovid}, we see that
			\begin{align*}
				&{}_1F_{2N}\left(\left. \begin{matrix}
					1 \\ \left\langle \frac{1}{2}+m + \frac{i-1}{2N} \right\rangle_{i=1}^{2N}
				\end{matrix}\right| -\frac{4\pi^{2N+2}n^2}{y^2N^{2N}}\right)\nonumber\\
				&=
				\frac{(2N)^{-2Nm-N}(2Nm+N-1)!}{\left(- \frac{4\pi^{2N+2}n^2}{y^2N^{2N}}\right)^{{1\over 2}+m-{1 \over 2N}} }\left\{\sum_{k=0}^{2N-1}{\frac{\exp \left\lbrace 2 \pi e^{\pi i k \over N}\left(\frac{-4\pi^2 n^2}{y^2} \right)^{1\over 2N}\right\rbrace}{\exp \left\lbrace  {\pi i k \over N}\left(2Nm+N-1 \right)\right\rbrace} }- \sum_{k=1}^{m}\frac{(2N)^{2Nm+N}\left(-\frac{4\pi^{2N+2}n^2}{y^2N^{2N}} \right)^{m+{1\over 2}-{1\over 2N}-k}}{(2N)^{2Nk}(2Nm-2Nk+N-1)!} \right\}.  
			\end{align*}
			Substituting the above expression in \eqref{Kwig}, we get
			\begin{align*}
				{}_{\frac{1}{2}} K_{-m}^{(N)}\left(\tfrac{4\pi^{N+1}n}{yN^N}, 0\right)& =   \frac{N^{Nm+2}}{\pi^2\sqrt{2}(2\pi)^{m}\left(\frac{2\pi n}{y}\right)^{m+\frac{2}{N}}} \Bigg[A_{-2Nm, N}\left(\frac{4\pi^{2N+2}n^2}{y^2N^{2N}}\right)+{1\over 2}\left( \pi \over N \right)^{5\over 2}  (-1)^{{1\over 2N}-m-{1\over 2}-{N\over 2}}\left( 2\pi n \over y \right)^{2\over N}\nonumber\\
				&\times\left\{\sum_{k=0}^{2N-1}(-1)^k\exp \left\lbrace \substack{2 \pi e^{\pi i k \over N}\left(\frac{-4\pi^2 n^2}{y^2} \right)^{1\over 2N}+{i\pi k\over N}}\right\rbrace  - \sum_{k=1}^{m}\frac{(2N)^{2Nm+N}\left(-\frac{4\pi^{2N+2}n^2}{y^2N^{2N}} \right)^{m+{1\over 2}-{1\over 2N}-k}}{(2N)^{2Nk}(2Nm-2Nk+N-1)!} \right\} \Bigg].
			\end{align*}
			Therefore, using \eqref{aanzeven} and the fact that $(-1)^{\frac{1}{2N}}=e^{\frac{i\pi}{2N}}$, we find that
			\begin{align*}
				&A_{-2Nm, N}\left(\frac{4\pi^{2N+2}n^2}{y^2N^{2N}}\right)+{1\over 2}\left( \pi \over N \right)^{5\over 2}  (-1)^{{1\over 2N}-m-{1\over 2}-{N\over 2}}\left( 2\pi n \over y \right)^{2\over N}
				\sum_{k=0}^{2N-1}(-1)^k\exp \left\lbrace 2 \pi e^{\pi i k \over N}\left(\frac{-4\pi^2 n^2}{y^2} \right)^{1\over 2N}+{i\pi k\over N}\right\rbrace\nonumber\\
				&=\frac{1}{2}(-1)^{m+\frac{N+1}{2}}\left(\frac{\pi}{N}\right)^{5/2}\left(\frac{2\pi n}{y}\right)^{2/N}\left\{\left[\sum_{k=0}^{\frac{N}{2}-1}-\sum_{k=\frac{N}{2}}^{\frac{3N}{2}-1}+\sum_{k=\frac{3N}{2}}^{2N-1}\right](-1)^k{\exp \left\lbrace 2 \pi e^{\pi i (2k+1) \over 2N}\left(\frac{2\pi n}{y} \right)^{1\over N}+\frac{\pi i(2k+1)}{2N}\right\rbrace }\right.\nonumber\\
				&\quad\left.-\left[\sum_{k=0}^{\frac{N}{2}-1}+\sum_{k=\frac{N}{2}}^{\frac{3N}{2}-1}+\sum_{k=\frac{3N}{2}}^{2N-1}\right](-1)^k{\exp \left\lbrace 2 \pi e^{\pi i (2k+1) \over 2N}\left(\frac{2\pi n}{y} \right)^{1\over N}+\frac{\pi i(2k+1)}{2N}\right\rbrace }\right\}\nonumber\\
				&=-(-1)^{m+\frac{N+1}{2}}\left(\frac{\pi}{N}\right)^{5/2}\left(\frac{2\pi n}{y}\right)^{2/N}\sum_{k=\frac{N}{2}}^{\frac{3N}{2}-1}(-1)^k{\exp \left\lbrace 2 \pi e^{\pi i (2k+1) \over 2N}\left(\frac{2\pi n}{y} \right)^{1\over N}+\frac{\pi i(2k+1)}{2N}\right\rbrace}\nonumber\\
				&=(-1)^{m+\frac{N+1}{2}}\left(\frac{\pi}{N}\right)^{5/2}\left(\frac{2\pi n}{y}\right)^{2/N}\sum_{k=-\frac{N}{2}}^{\frac{N}{2}-1}(-1)^k{\exp \left\lbrace -2 \pi e^{\pi i (2k+1) \over 2N}\left(\frac{2\pi n}{y} \right)^{1\over N}+\frac{\pi i(2k+1)}{2N}\right\rbrace},
			\end{align*}
			where in the last step, we replaced $k$ by $k+N$.
			\begin{align*}
				{}_{\frac{1}{2}} K_{-m}^{(N)}\left(\tfrac{4\pi^{N+1}n}{yN^N}, 0\right) 
				&=(-1)^{m+\frac{N+1}{2}}\sqrt{\frac{\pi}{2}}N^{mN-\frac{1}{2}}\left(\frac{4\pi^2n}{y}\right)^{-m}\sum_{k=-\frac{N}{2}}^{\frac{N}{2}-1}(-1)^k{\exp \left\lbrace -2 \pi e^{\pi i (2k+1) \over 2N}\left(\tfrac{2\pi n}{y} \right)^{1\over N}+\tfrac{\pi i(2k+1)}{2N}\right\rbrace}\nonumber\\
				&\quad+(-1)^{\frac{N}{2}}\pi N^{mN+\frac{1}{2}}(2\pi)^{2Nm+N-\frac{1}{N}-\frac{1}{2}}\left(\frac{n}{y}\right)^{m+1-\frac{1}{N}}\sum_{k=1}^{m}\frac{\left(-\frac{y^2}{(2\pi)^{2N+2}n^2}\right)^{k}}{(2Nm-2Nk+N-1)!}.
			\end{align*}
			Next, proceeding similarly as in \eqref{cancel}, we see that
			\begin{align*}
				&\lim_{a\to-2Nm}\frac{2^{\frac{1}{2} + \frac{a+1}{N}} \pi^{\frac{(1-N)a}{2N} -N}} {\left(\frac{4\pi^{N+1}n}{yN^N}\right)^{1+\frac{1}{N}+\frac{a}{2N}}}   \frac{\sin\(\frac{\pi}{2}(N-a)\)}{2^{N-1}}C_{m,N}\left(\frac{1}{2}, \frac{a}{2N}, 0, \frac{4\pi^{N+1}n}{yN^N}\right)\nonumber\\    
				&=-(-1)^{\frac{N}{2}}\pi N^{mN+\frac{1}{2}}(2\pi)^{2Nm+N-\frac{1}{N}-\frac{1}{2}}\left(\frac{n}{y}\right)^{m+1-\frac{1}{N}}\sum_{k=1}^{m}\frac{\left(-\frac{y^2}{(2\pi)^{2N+2}n^2}\right)^{k}}{(2Nm-2Nk+N-1)!}.
			\end{align*}
			Therefore, the expression inside the square brackets on the right-hand side of \eqref{Eqn:Analytic continuationwig} simplifies in the limit to
			\begin{align}\label{limitfourtheven}
				&\lim_{a\to-2Nm}\left[{}_{\frac{1}{2}} K_{\frac{a}{2N}}^{(N)}\left(\tfrac{4\pi^{N+1}n}{yN^N}, 0\right) - \frac{2^{\frac{1}{2} + \frac{a+1}{N}} \pi^{\frac{(1-N)a}{2N} -N}} {\left(\frac{4\pi^{N+1}n}{yN^N}\right)^{1+\frac{1}{N}+\frac{a}{2N}}}   \frac{\sin\(\frac{\pi}{2}(N-a)\)}{2^{N-1}}C_{m,N}\left(\frac{1}{2}, \frac{a}{2N}, 0, \frac{4\pi^{N+1}n}{yN^N}\right)\right]\nonumber\\
				&=(-1)^{m+\frac{N+1}{2}}\sqrt{\frac{\pi}{2}}N^{mN-\frac{1}{2}}\left(\frac{4\pi^2n}{y}\right)^{-m}\sum_{k=-\frac{N}{2}}^{\frac{N}{2}-1}(-1)^k{\exp \left( -2 \pi e^{\pi i (2k+1) \over 2N}\left(\frac{2\pi n}{y} \right)^{1\over N}+\frac{\pi i(2k+1)}{2N}\right)}.
			\end{align}
			From \eqref{limitfourtheven}, the definition of $S_{a}^{(N)}(n)$ in \eqref{defbf}, and interchanging the order of limit and summation (since the summand of the series below is $\mathcal{O}\left(n^{\max \left(-\frac{\re(a)}{2N}, \frac{\re(a)}{2N} + \frac{1}{N} - 1 \right)-2m-3-\frac{1}{N}-\frac{\re(a)}{2N}+\epsilon} \right)$ for any $\epsilon>0$), we see that
			\begin{align}\label{fourthtermeven}
				&\lim_{a\to-2Nm}\hspace{-5pt}\tfrac{2 (2\pi)^{\frac{1}{N}-\frac{1}{2}} N^{\frac{a-1}{2}}}{y^{\frac{1}{N}+\frac{a}{2N}}} \sum_{n=1}^\infty \tfrac{S_a^{(N)}(n)}{n^{\frac{a}{2N}}} \bigg [{}_{\frac{1}{2}} K_{\frac{a}{2N}}^{(N)}\left(\substack{\tfrac{4\pi^{N+1}n}{yN^N}, 0}\right) - \tfrac{2^{\frac{1}{2} + \frac{a+1}{N}} \pi^{\frac{(1-N)a}{2N} -N}} {\left(\tfrac{4\pi^{N+1}n}{yN^N}\right)^{1+\frac{1}{N}+\frac{a}{2N}}}   \tfrac{\sin\(\frac{\pi}{2}(N-a)\)}{2^{N-1}}C_{m,N}\left(\substack{\tfrac{1}{2}, \tfrac{a}{2N}, 0, \tfrac{4\pi^{N+1}n}{yN^N}}\right)\bigg]\nonumber\\
				&=\frac{1}{N}\left( y\over  2\pi\right)^{2m-{1\over N}}(-1)^{m+\frac{N+1}{2}}\sum_{d_2=1}^{\infty} {d_2}^{{1\over N}-1-2m}\sum_{k=-\frac{N}{2}}^{\frac{N}{2}-1}(-1)^ke^{\frac{\pi i(2k+1)}{2N}}\sum_{d_1=1}^{\infty }{\exp \left( -2 \pi d_1e^{\pi i (2k+1) \over 2N}\left(\frac{2\pi d_2}{y} \right)^{1\over N}\right)}\nonumber\\
				&=\frac{1}{N}\left( y\over  2\pi\right)^{2m-{1\over N}}(-1)^{m+\frac{N+1}{2}}\sum_{d_2=1}^{\infty} {d_2}^{{1\over N}-1-2m}\left[ \sum_{k=-{N\over 2}}^{-1}+\sum_{k=0}^{{N\over 2}-1}\right]\frac{(-1)^ke^{\frac{\pi i(2k+1)}{2N}}}{\exp \left( 2 \pi e^{\pi i (2k+1) \over 2N}\left(\frac{2\pi d_2}{y} \right)^{1\over N}\right)-1}\nonumber\\
				&=\frac{1}{N}\left( y\over  2\pi\right)^{2m-{1\over N}}(-1)^{m+\frac{N+1}{2}}\sum_{n=1}^{\infty} {n}^{{1\over N}-1-2m}\sum_{k=0}^{{N\over 2}-1} (-1)^k \textup{Im} \left( e^{i\pi (2k+1) \over 2N}\over \exp \left( 2 \pi e^{\pi i (2k+1) \over 2N}\left(\frac{2\pi {n}}{y} \right)^{1\over N}\right)-1 \right),
			\end{align}
			where, in the penultimate step, we evaluated the sum over $d_1$ using the geometric series formula, since $-{N\over 2} \leq k \leq {N\over 2} -{1} $ implies $-{\pi \over 2}<\arg \left(e^{\pi i (2k+1) \over 2N}\left(\frac{2\pi {n}}{y} \right)^{1\over N} \right) <{\pi \over 2} $, and in the ultimate step, we replaced $k$ by $-k-1$ in the first finite sum. Finally, we consider the  finite sum on the right-hand side of \eqref{Eqn:Analytic continuationwig}. Upon letting $a\to-2Nm$ in it, the $k=m$ term vanishes since for even $N$, $\zeta(-N)=0$. Therefore, substituting $k$ by $k-1$, we obtain \begin{align}\label{fifthtermeven}
				\lim_{a\to-2Nm}\frac{y}{2\pi^{2}} \sum_{k=0}^m \left(-\frac{y^2}{4\pi^2}\right)^k   \zeta(-2kN-N-a)\zeta(2k+2)=- \frac{2}{y}\sum_{k=1}^{m} (-1)^k \zeta(2N(m-k)+N)\zeta (2k)\left({2\pi \over y}\right)^{-2k}.
			\end{align}
			Thus, from \eqref{Eqn:Analytic continuationwig}, \eqref{fourthtermeven} and \eqref{fifthtermeven}, we see that
			\begin{align*}
				&\sum_{n=1}^\infty \sigma_{-2Nm}^{(N)}(n) e^{-ny}  + \frac{1}{2}\zeta(2Nm) - \frac{1}{y}\zeta(N+2Nm) - \frac{y^{2m-1/N}}{N} \Gamma \left(\frac{1}{N}-2m\right) \zeta \left(\frac{1}{N}-2m\right)\nonumber\\
				&=\frac{1}{N}\left( y\over  2\pi\right)^{2m-{1\over N}}(-1)^{m+\frac{N+1}{2}}\sum_{n=1}^{\infty} {n}^{{1\over N}-1-2m}\sum_{k=0}^{{N\over 2}-1} (-1)^k \textup{Im} \left( e^{i\pi (2k+1) \over 2N}\over \exp \left( 2 \pi e^{\pi i (2k+1) \over 2N}\left(\frac{2\pi {n}}{y} \right)^{1\over N}\right)-1 \right)
				\nonumber\\
				&\quad- \frac{2}{y}\sum_{k=1}^{m} (-1)^k \zeta(2N(m-k)+N)\zeta (2k)\left({2\pi \over y}\right)^{-2k}.
			\end{align*}
			Using the functional equation \eqref{zetafe} to rewrite $\Gamma \left(\frac{1}{N}-2m\right) \zeta \left(\frac{1}{N}-2m\right)$, and inducting $\frac{1}{y}\zeta(N+2Nm)$ into the second finite sum over $k$ as its $k=0$ term, we arrive at
			\begin{align*}
				&\sum_{n=1}^\infty \sigma_{-2Nm}^{(N)}(n) e^{-ny}  + \frac{1}{2}\zeta(2Nm)\nonumber\\
				&=\frac{(-1)^m}{N}\left(\frac{y}{2\pi}\right)^{2m-\frac{1}{N}}\Bigg[\frac{\zeta\left(2m+1-\frac{1}{N}\right)}{2\cos\left(\frac{\pi}{2N}\right)}
				(-1)^{\frac{N+1}{2}}\sum_{n=1}^{\infty} {n}^{{1\over N}-1-2m}\sum_{k=0}^{{N\over 2}-1} (-1)^k\textup{Im} \left( \substack{e^{i\pi (2k+1) \over 2N}\over \exp \left( 2 \pi e^{\pi i (2k+1) \over 2N}\left(\frac{2\pi {n}}{y} \right)^{1\over N}\right)-1 }\right)\Bigg]\nonumber\\
				&\quad- \frac{2}{y}\sum_{k=0}^{m} (-1)^k \zeta(2N(m-k)+N)\zeta (2k)\left(\tfrac{2\pi}{y}\right)^{-2k}.
			\end{align*}
			Letting $\alpha={y\over 2^N }$ and $\beta = \frac{2\pi^{1+{1\over N}}}{y^{1\over N}}$ and multiplying both sides of the above equation by $\a^{-\left(\frac{2Nm-1}{N+1}\right)}$ and using Euler's formula \eqref{ef} and \eqref{fkny}, and simplifying, we conclude that \eqref{neveneqn} holds.
		\end{proof}

		\subsection{A transformation for the series $\sum_{n=1}^{\infty}\sigma_{-2Nm+N}^{(N)}(n)e^{-ny}$}\label{8.2}
		Let $N$ be an even positive integer and $m$ be any integer.
		Differentiating both sides of the identity in \cite[Theorem 2.12]{DGKM} with respect to $a$, then letting $a=1$, and using the facts $\left.\frac{\partial}{\partial a}\zeta(s, a)\right|_{a=1}=-s\zeta(s+1)$ and $\a \b^N=\pi ^{N+1}$, we get 
		\begin{align}\label{case-2Nm+N}
			{\a}^{-\left( \frac{2Nm-1}{N+1}\right) }&\left( 
			\zeta \left( 2Nm\right) +\sum ^{m}_{j=1}
			\frac{B_{2j}}{\left( 2j\right) !}
			\zeta \left( 2N\left( m-j\right) \right) 
			\left( 2^{N}\alpha \right) ^{2j}-2^{N}\alpha \sum ^{\infty }_{n=1}
			\frac{n^{-2Nm+N}}{e^{\left( 2n\right) ^{N}\alpha }-1} \right) \nonumber \\
			&= \b^{-\left( \frac{2Nm-1}{N+1}\right) }
			\frac{2^{2Nm-1}}{N} 
			\left\{ -\left( \frac{1-2Nm}{N}\right)  \pi^{-\left( \frac{1-2Nm}{N}\right)} \Gamma \left( \frac{1-2Nm}{N}\right) \zeta \left( \frac{1-2Nm}{N}+1\right) \right. 
			\nonumber \\
			& \qquad \left.-4 \pi  \left( -1\right) ^{m+1} 
			2^\frac{1-2Nm}{N}\sum ^{\frac{N}{2}-1}_{j=0}
			\sum ^{\infty }_{n=1}\frac{1}{n^{2m-\frac{1}{N}}}
			\re \left( \frac{e^\frac{i\pi  \left( 2j+1\right) }{2N}}{\exp\left((2n)^{1\over N}\b e^\frac{i\pi  \left( 2j+1\right) }{2N} \right)-1 } \right)\right\}  \nonumber \\
			& \qquad+\frac{\left(-1\right)^{\frac{N}{2}+1}}{2}
			2^{2Nm-1} \frac{B_{\left( 2m-1\right) N}}{\left( \left( 2m-1\right) N\right) !}
			\alpha ^{\frac{2}{N+1}}\beta ^{N+\frac{2N^{2}\left( m-1\right) -N}{N+1}}. 
		\end{align}
%		It gives a special case $a=N$ for an even $N$. Let $m=0$ in \eqref{case-2Nm+N}, we have 
%		\begin{align}
%			{\a}^{ \frac{1}{N+1} }\left( 
%			{-1\over 2} +2^N\a\sum ^{\infty }_{n=1}
%			\frac{n^{N}}{e^{\left( 2n\right) ^{N}\alpha }-1} \right) &= 
%			\frac{\b^{ \frac{1}{N+1} }}{2N} 
%			\left\{ - \frac{1}{N}  \pi^{-\left( \frac{1}{N}\right)} \Gamma \left( \frac{1}{N}\right) \zeta \left( \frac{1}{N}+1\right) \right. 
%			\nonumber \\
%			& \qquad \left. +\pi   
%			2^{\frac{1}{N}+ 2}\sum ^{\frac{N}{2}-1}_{j=0}
%			\sum ^{\infty }_{n=1}{n^{\frac{1}{N}}}
%			\re \left( \frac{e^\frac{i\pi  \left( 2j+1\right) }{2N}}{\exp\left((2n)^{1\over N}\b e^\frac{i\pi  \left( 2j+1\right) }{2N} \right)-1 } \right)\right\} 
%		\end{align}
One may also prove this identity by letting $a=-2Nm+N$ ($N$ even) in Theorem \ref{Analytic continuation}. We leave this as an exercise.
	\section{The generalized Dedekind eta-transformation}\label{zagierjrh}
	
As mentioned in the introduction, Tenenbaum, Wu and Li \cite{tenenbaum} obtained a new proof of the asymptotic expansion of $p_k(n)$, the number of power partitions of $n$, as $n\to\infty$ using the saddle point method. In this proof, they encountered the series $\sum_{n=1}^{\infty}\sigma_{N}^{(N)}(n)e^{-ny}$. We now prove the exact transformation for this series given in Theorem \ref{zagiereqvt} and then show that it is equivalent to the generalized Dedekind eta-transformation given in \eqref{Zagier}.

\begin{proof}[Theorem \textup{\ref{zagiereqvt}}][]
For $N$ odd, let $m=-1$ in Theorem \ref{zetagen3}. For $N$ even, let $m=0$ in \eqref{case-2Nm+N} above, use the fact Re$(z)=\frac{1}{2}(z+\overline{z})$, separate the two sums over $j$ and replace $j$ by $-1-j$ in the second sum. Then, in both of the resulting identities, that is, the ones for $N$ odd and $N$ even, substitute $y=2^{N}\alpha$ to arrive at \eqref{zagier to us}. 
\end{proof}

%\subsection{Equivalence of Theorem \ref{zagiereqvt} and \eqref{Zagier}}

	We now derive Theorem \ref{zagiereqvt} from \eqref{Zagier}.
	\begin{proof}
		We prove the result only for odd $N$. The case for $N$ even can be similarly proved.
		
		Letting $z=iy$, Re$(y)>0$, in \eqref{Zagier}, we have
		\begin{equation}\label{Zagier 1}
			\eta_N\left( {i\over y}\right) =(2\pi)^{(N-1)/2}\sqrt{y}\prod_{w\in\mathbb{H}\atop{w^N=\pm iy}}\eta_{1/N}(w),
		\end{equation}
		whereas taking logarithm of both sides of \eqref{Zagier eta} with $s=N$ and $z=iy$ with Re$(y)>0$ results in
		\begin{equation}\label{log eta}
			\log \eta_N(iy):=\pi \zeta(-N)y-\sum_{n=1}^{\infty}\sigma_N^{(N)}(n){e^{-2\pi n y}\over n}.
		\end{equation}
		Taking logarithm on both sides of \eqref{Zagier 1} leads to
		\begin{align}\label{expn Zag}
			\log \left( \eta_N\left( {i\over y}\right)\right) ={1\over 2}(N-1)\log (2\pi)+{1\over 2}\log( y)+\sum_{w\in\mathbb{H}\atop{w^N=iy}}\log\eta_{1/N}(w)+\sum_{w\in\mathbb{H}\atop{w^N=-iy}}\log\eta_{1/N}(w).
		\end{align}
		Now consider the third expression on the right-hand side of \eqref{expn Zag}. %Let $w=r_1e^{i\theta}, \  
		For odd $N$, $w^N=iy$ and $w\in \mathbb{H}$ implies $w=y^{1\over N}e^{(4k+1)\pi i \over 2N}$, where $0\leq k\leq{N-1\over 2}$. Similarly, $w$ in the fourth expression is given by  $w=y^{1\over N}e^{(4k-1)\pi i \over 2N}$, where $1\leq k\leq{N-1\over 2}$. Hence using \eqref{log eta} and the definition of $\sigma_{N}^{(N)}(n)$, we observe that
		\begin{align*}
			\sum_{w\in\mathbb{H}\atop{w^N=iy}}&\eta_{1/N}(w)+\sum_{w\in\mathbb{H}\atop{w^N=-iy}}\eta_{1/N}(w)\nonumber
			\\
			&=\sum_{k=0}^{N-1\over 2}\left[-i \pi \zeta\left(-\frac{1}{N}\right) y^{1\over N}e^{(4k+1)\pi i \over 2N}-\sum_{n=1}^{\infty}\sum_{m=1}^{\infty}\frac{1}{m}\exp\left( 2\pi i n^{1\over N}my^{1\over N}e^{(4k+1)\pi i \over 2N}\right) \right]\nonumber
			\\
			&\quad+\sum_{k=1}^{N-1\over 2}\left[-i \pi \zeta\left(-\frac{1}{N}\right) y^{1\over N}e^{(4k-1)\pi i \over 2N}-\sum_{n=1}^{\infty}\sum_{m=1}^{\infty}\frac{1}{m} \exp\left( 2\pi i n^{1\over N}my^{1\over N}e^{(4k-1)\pi i \over 2N}\right)\right] \nonumber
			\\
			&=-\sum_{k=0}^{N-1} i\pi \zeta\left(-\frac{1}{N}\right)  y^{1\over N}e^{(2k+1)\pi i \over 2N}-\sum_{k=0}^{N-1}\sum_{n=1}^{\infty}\sum_{m=1}^{\infty}\frac{1}{m}\exp\left( 2\pi i m n^{1\over N}y^{1\over N}e^{(2k+1)\pi i \over 2N}\right). 
		\end{align*}
		Inserting \eqref{log eta} with $y$ replaced by $1/y$, and the above equation in \eqref{expn Zag}, we deduce an equivalent form of Zagier's identity, that is,
		\begin{align*}
			{\pi \zeta(-N)\over y}-\sum_{n=1}^{\infty}\sigma_N^{(N)}(n){e^{-2\pi n / y}\over n}&={1\over 2}(N-1)\log (2\pi)+{1\over 2}\log( y)- i\pi \zeta\left(\frac{-1}{N}\right) y^{1\over N}\sum_{k=0}^{N-1}e^{(2k+1)\pi i \over 2N}\nonumber
			\\
			&\qquad-\sum_{k=0}^{N-1}\sum_{n=1}^{\infty}\sum_{m=1}^{\infty}\frac{1}{m}\exp\left( 2\pi i m n^{1\over N}y^{1\over N}e^{(2k+1)\pi i \over 2N}\right).
		\end{align*} 
		Differentiating both sides with respect to $y$, we obtain 
		\begin{align}\label{us to zagier}
			-{\pi \zeta(-N)\over y^2}-{2\pi \over y^2}\sum_{n=1}^{\infty}\sigma_N^{(N)}(n){e^{-2\pi n \over y}}&={1\over 2y}-{i\pi\over N }\zeta\left(-\frac{1}{N}\right) y^{{1\over N}-1}\sum_{k=0}^{N-1} e^{(2k+1)\pi i \over 2N}\nonumber
			\\
			&\quad-{2 \pi i\over N }e^{\pi i \over 2N}y^{{1\over N}-1}\sum_{k=0}^{N-1}e^{\pi  ik \over N}\sum_{n=1}^{\infty}n^{1\over N}\sum_{m=1}^{\infty}{\left( \exp\left( 2\pi i  n^{1\over N}y^{1\over N}e^{(2k+1)\pi i \over 2N}\right)\right)^m }.\nonumber
			\\
			&={1\over 2y}- {i\pi\over N }\zeta\left(-\frac{1}{N}\right) y^{{1\over N}-1}\left(i\over \sin\left(\pi\over 2N \right)  \right)  \nonumber
			\\
			&\quad+{2 \pi \over N } y^{{1\over N}-1}\sum_{j=-{(N-1)\over 2}}^{N-1\over 2} e^{\pi  ij \over N}\sum_{n=1}^{\infty}\frac{n^{1\over N} \exp\left( -2\pi   (ny)^{1\over N}e^{\pi ij \over N}\right) }{1-\exp\left( -2\pi   (ny)^{1\over N}e^{\pi ij \over N}\right)},
		\end{align}
		where in the second sum over $k$ in the last step, we replaced $k$ by $j+{N-1\over 2}$. Now multiply both sides by $-{y^2\over 2\pi}$, and then replace $y$ by $2\pi/y$ in the resulting identity to get \eqref{zagier to us} upon simplification.
	\end{proof}
	\begin{remark}\label{remark}
		If we start from our \eqref{zagier to us} by retracing the steps until \eqref{us to zagier} and then integrating both sides with respect to $y$ and doing further simplification, we obtain \eqref{Zagier} without the explicit evaluation of the constant $(2\pi)^{\frac{N-1}{2}}$. 
	\end{remark}
	
	\section{Concluding remarks}\label{cr}
We obtained explicit general transformations for the series $\sum_{n=1}^\infty \sigma_a^{(N)}(n) e^{-ny}$ in Theorems \ref{In terms of G} and \ref{Analytic continuation}. The special case $a=3N-1$ of Theorem \ref{In terms of G}, which is given in Corollary \ref{resulttm=1cor} allows, in view of \eqref{gpp3N-1}, a straightforward derivation of the asymptotic behavior of the generating function of the generalized power partitions with ``$n^{2N-1}$ copies of $n^{N}$'' as $q\to1^{-}$, that is, of $\prod\limits_{n=1}^{\infty}\left(1-q^{n^N}\right)^{-n^{2N-1}}$.

This generating function, which is introduced for the first time in this paper, raises important questions from the point of view of both analysis and combinatorics. We list them below.\\

(1) Determine the asymptotic behavior of $\prod\limits_{n=1}^{\infty}\left(1-q^{n^N}\right)^{-n^{2N-1}}$ as $q$ approaches other roots of unity.

(2) Determine the constant $c$ in Corollary \ref{asym F_N(q)}. Note that when $N=1$, Wright has shown that $c=\displaystyle2\int_{0}^{\infty}\frac{y\log(y)}{e^{2\pi y}-1}\, dy$. But for odd $N>1$, this constant could very well depend on $N$. Our methods do not allow for its explicit determination.

(3) Extend the results in \cite{bbbf} to encompass the infinite product $\prod\limits_{n=1}^{\infty}\left(1-q^{n^N}\right)^{-f_N(n)}$, where $f_N(n)$ is an arithmetic functions whose Dirichlet series satisfy properties similar to those in \cite[p.~4]{bbbf}.

(4) Find the description of the generalized plane partitions generated by the  $\prod\limits_{n=1}^{\infty}\left(1-q^{n^N}\right)^{-n^{2N-1}}$. In this paper, we have viewed the product as generating partitions with ``$n^{2N-1}$ copies of $n^{N}$'', which for $N=1$, are partitions with ``$n$ copies of $n$''. In view of the latter being equinumerous with the number of plane partitions of the associated positive integer, it certainly seems worthwhile to study the generalized plane partitions generated by the above product as in what they are, how they could be put in a one-to-one correspondence with partitions with ``$n^{2N-1}$ copies of $n^{N}$''.

(5) Find the asymptotic behavior of $F_N(q)$ for \emph{even} $N$. In this paper, we have only considered odd $N$. Moreover, the same questions in (1)-(4) above can be asked in the case when $N$ is even.\\

As a special case $a=N$ of Theorem \ref{In terms of G}, we obtained Corollary \ref{zagiereqvt} which, as mentioned in Remark \ref{remark}, is equivalent to Zagier's identity \eqref{Zagier} up to the determination of the integration constant $\frac{(N-1)}{2}\log(2\pi)$. To directly get Zagier's transformation, one way might be to obtain a transformation for $\sum_{n=1}^\infty S_{-1}^{(N)}(n)e^{-ny}$, where $S_{a}^{(N)}(n)$ is defined in \eqref{defbf}. More generally, obtaining a transformation for $\sum_{n=1}^\infty S_{a}^{(N)}(n)e^{-ny}$ might prove to be of importance.

\section*{Acknowledgements}
The first author is an INSPIRE faculty at IISER Kolkata supported by the DST grant DST/INSPIRE/04/\newline 2021/002753. The major part of this work was done when the first author was a postdoctoral fellow at IIT Gandhinagar and was funded by the SERB NPDF grant PDF/2021/001224. The second author is supported by the Swarnajayanti Fellowship grant SB/SJF/2021-22/08 of SERB (Govt. of India). The third author is supported by CSIR SPM Fellowship under the grant number SPM-06/1031(0281)/2018-EMR-I. All of the authors sincerely thank the respective funding agencies for their support. The first author also thanks IIT Gandhinagar for its support.
	

\begin{thebibliography}{00}
		
		\bibitem{gea}
		G.E.~Andrews, The theory of partitions, Addison-Wesley Pub. Co., NY, 300 pp. (1976). Reissued, Cambridge University Press, New York, 1998.
		
		\bibitem{apelblat}
		A.~Apelblat, \emph{Differentiation of the Mittag-Leffler functions with respect to parameters in the Laplace transform approach}, Mathematics, 8 (2020), 657.
		
		\bibitem{Apostol} T.~Apostol, {\em Introduction to analytic number theory}, Springer Science+Business Media, New York, 1976.
		
%		\bibitem{berndthurwitzzeta}
%		B.~C.~Berndt, \emph{On the Hurwitz zeta function}, Rocky~Mountain~J.~Math.~\textbf{2} (1972), 151--157.
		
%		\bibitem{RN_1}
%		B.~C.~Berndt, \emph{Ramanujan's notebooks, Part I}, Springer-Verlag, New York, 1985.
		
		\bibitem{bcbramsecnote}
		B.~C.~Berndt, \emph{Ramanujan's Notebooks, Part \textup{II}}, Springer-Verlag, New York, 1989.
		
		\bibitem{V}
		B.~C.~Berndt, \emph{Ramanujan's Notebooks}, Part \textup{V}, Springer--Verlag, New
		York, 1998.
		
			\bibitem{RLNII}
		George E.~Andrews and Bruce C.~Berndt, \emph{Ramanujan's Lost Notebook Part \textup{II}}, Springer, New York, 2009, 418 pp.
		
		\bibitem{bdrz1}
		B.~C.~Berndt, A.~Dixit, A.~Roy and A.~Zaharescu, \emph{New pathways and connections in number theory and analysis motivated by two incorrect claims of Ramanujan}, Adv. Math.~\textbf{304} (2017), 809--929.
		
		\bibitem{berndtstraubzeta}
		B.~C.~Berndt and A.~Straub, \emph{Ramanujan's formula for $\zeta(2n+1)$}, Exploring the Riemann zeta function, Eds. H. Montgomery, A. Nikeghbali, and M. Rassias, pp.~13--34, Springer, 2017.
		
		\bibitem{bbbf}
		W.~Bridges, B.~Brindle, K.~Bringmann and J.~Franke, \emph{Asymptotic expansions for partitions generated by infinite products}, submitted for publication, arXiv:2303:11864v1, March 21, 2023.
		
		\bibitem{chaundy}
		T.~W.~Chaundy, \emph{The unrestricted plane partition}, Q.~J.~Math.~\textbf{Ser. 3} (1932), 76--80.
		
		\bibitem{cohen}
		E.~Cohen, \emph{An extension of Ramanujan's sum}, Duke Math.~J.~\textbf{16} (1949), 85--90.
		
		\bibitem{crum}
		M.~M.~Crum, \emph{On some Dirichlet series}, J.~London Math.~Soc.~\textbf{15} (1940), 10--15.
		
		\bibitem{dav}
		H.~Davenport, \emph{Multiplicative Number Theory}, 3rd ed., Springer--Verlag, New York, 2000.
		
%		\bibitem{deninger}
%		C.~Deninger, \emph{On the analogue of the formula of Chowla and Selberg for real quadratic fields}, J.~Reine Angew. Math.~\textbf{351} (1984), 171--191.
%		
%		\bibitem{dilcher}
%		K.~Dilcher, \emph{On generalized gamma functions related to the Laurent coefficients of the Riemann zeta function}, Aequationes Math.~\textbf{48} (1994), 55--85.
		
		\bibitem{hhf1}
		A.~Dixit, R.~Gupta and R.~Kumar, \emph{Extended higher Herglotz functions \textup{I}. Functional equations}, submitted for publication.
		
		\bibitem{DGKM}
		A.~Dixit, R.~Gupta, R.~Kumar and B.~Maji, \emph{Generalized Lambert series, Raabe's cosine transform and a two-parameter generalization of Ramanujan's formula for $\zeta(2m + 1)$}, Nagoya Math.~J.~\textbf{239} (2020), 232--293.
		
	\bibitem{dkk}
	A.~Dixit, A.~Kesarwani and R.~Kumar, \emph{Explicit transformations of certain Lambert series}, Res.~ Math.~Sci.~\textbf{9}, 34 (2022) (54 pages). 
		
		\bibitem{dk03}
		A.~Dixit and R.~Kumar, \emph{Applications of the Lipschitz summation formula and a generalization of Raabe's cosine transform}, submitted for publication.
		
		\bibitem{dixitmaji1}
		A.~Dixit and B.~Maji, \emph{Generalized Lambert series and arithmetic nature of odd zeta values}, Proc.~Royal~Soc.~Edinburgh, Sect. A: Mathematics, \textbf{150} Issue 2 (2020), 741--769.
		
		\bibitem{DMV} A.~Dixit, B.~Maji and A.~Vatwani, \emph{Vorono\"{\dotlessi} summation formula for the generalized divisor function $\sigma_z^{(k)}(n)$}, submitted for publication.
		
		\bibitem{dixfer1}
		A.~L.~Dixon and W.~L.~Ferrar, \emph{Lattice-point summation formulae}, Quart.~J.~Math.~\textbf{2} (1931), 31--54.
		
		\bibitem{dorigoni}
		D.~Dorigoni and A.~Kleinschmidt, \emph{Resurgent expansion of Lambert series and iterated Eisenstein integrals}, Commun. Number Theory Phys.~\textbf{15} No. 1 (2021), 1--57.
		
%		\bibitem{dzhrbashyan}
%		M.~M.~Dzhrbashyan, \emph{Integral Transforms and Representations of Functions in the Complex Domain}, Izdat. ``Nauka", Moscow, 1966, 671 pp.
		
%		\bibitem{transseries}
%		G.~A.~Edgar, \emph{Transseries for beginners}, Real Anal.~Exchange~\textbf{35} no. 2 (2010), 253--309.
		\bibitem{gkmr}
R.~Gorenflo, A.~A.~Kilbas, F.~Mainardi and S.~V.~Rogosin, \emph{Mittag-Leffler Functions, Related Topics and Applications}, Springer Monographs in Mathematics, Springer-Verlag 2014, 443 pp.
		
		\bibitem{guinand} 
		A.~P.~Guinand, \emph{Some rapidly convergent series for the Riemann $\Xi$-function}, Quart. J. Math.(Oxford) \textbf{6} (1955), 156-160.
		
%		\bibitem{ishibashi}
%		M.~Ishibashi, \emph{Laurent coefficients of the zeta function of an indefinite quadratic form}, Acta Arith.~\textbf{106} No. 1 (2003), 59--71.
		%\bibitem{kanemitsuclosed}
		%S.~Kanemitsu, \emph{On evaluation of certain limits in closed form}, Th\'{e}orie des nombres (Quebec), 459--474, de Gruyter, Berlin, 1989.
		
		\bibitem{ramhar}
		G.H. Hardy and S. Ramanujan, Asymptotic formulæ in combinatory analysis, Proc. London Math. Soc., (2) 17, (1918), 75–115. Reprinted in Collected Papers of Srinivasa Ramanujan, Chelsea Publishing Company, New York (1962), pp. 276–309
		
		\bibitem{ktyhr}
		S.~Kanemitsu, Y.~Tanigawa and M.~Yoshimoto, \emph{On the values of the Riemann zeta-function at rational arguments}, Hardy-Ramanujan J.~\textbf{24} (2001), 11--19.
		
		\bibitem{ktyacta}
		S.~Kanemitsu, Y.~Tanigawa, and M.~Yoshimoto, \emph{On multiple Hurwitz zeta-function values at rational arguments}, Acta Arith.~\textbf{107}, No. 1 (2003), 45--67.
		
		\bibitem{koshwigleningrad}
		N.~S.~Koshliakov, \emph{Sur une int\'{e}grale d\'{e}finie et son application $\grave{a}$ la th\'{e}orie des formules sommatoires}, J.~Soc.~Phys.-Math.~Leningrad~\textbf{2}, No. 2 (1929), 123--130.
		
		\bibitem{kosh1938}
		N.~S.~Koshliakov, \emph{Note on certain integrals involving Bessel functions}, Bull. Acad. Sci. URSS Ser. Math.~\textbf{2} No. 4, 417--420; English text 421--425 (1938).
		
%		\bibitem{kratzel1}
%		E.~Kr\"{a}tzel, \emph{Dedekindsche Funktionen und Summen, I} (German), Period.~Math.~ Hungar.~\textbf{12} no. 2, (1981), 113--123.
		 
		 \bibitem{kratzel2}
		 E.~Kr\"{a}tzel, \emph{Dedekindsche Funktionen und Summen, II} (German), Period.~Math.~ Hungar.~\textbf{12} no. 3, (1981), 163--179.
		
		\bibitem {Luke} Y.~L.~Luke, \emph{The Special Functions and Their Approximations}, Vol. 1, UK Edition, Academic Press, INC. 1969.
		
	\bibitem{mccarthy}
		P.~J.~McCarthy, \emph{Arithmetische Funktionen}, Springer Spektrum, 2017.
		
		\bibitem{mieghem}
		P.~ Van Mieghem, \emph{The Mittag-Leffler function}, arXiv:2005.13330v4, September 26, 2021 (71 pages).
		
		\bibitem{NIST} F. W. J. Olver, D. W. Lozier, R. F. Boisvert and C. W. Clark, eds., {\it NIST Handbook of Mathematical Functions}, Cambridge University Press, Cambridge, 2010.	
		
		\bibitem{Prudnikov} A. P. Prudnikov, Yu. A. Brychkov and O. I. Marichev, {\em Integrals and series}, Vol. 3: More Special Functions, Gordon and Breach, New York, 1990.
		
		\bibitem{ram1918}
		S.~Ramanujan, \emph{On certain trigonometric sums and their applications in the theory of numbers}, Trans.~Cambridge~Philos.~Soc.~\textbf{22} (1918), 179-199.
		
		\bibitem{lnb}
		S.~Ramanujan, \emph{The Lost Notebook and Other Unpublished
			Papers}, Narosa, New Delhi, 1988.
		
		\bibitem{ramnote}
		S.~Ramanujan, Notebooks (2 volumes), Tata Institute of Fundamental Research, Bombay, 1957; second ed., 2012.
		
%		\bibitem{roblesroy}
%		N.~Robles and A.~Roy, \emph{Moments of averages of generalized Ramanujan sums}, Monatsh.~Math.~\textbf{182} no. 2 (2017), 433-461.
%		
%		\bibitem{shirasaka}
%		S.~Shirasaka, \emph{On the Laurent coefficients of a class of Dirichlet series}, Result.~Math.~\textbf{42} (2002), 128--138.
		
		\bibitem{temme}
		N.~M.~Temme, \emph{Special functions: An introduction to the classical functions of mathematical physics}, Wiley-Interscience Publication, New York, 1996.
		
		\bibitem{tenenbaum}
		G.~Tenenbaum, J.~Wu and Y.-L.~Li, \emph{Power partitions and saddle-point method}, J.~Number Theory~\textbf{204} (2019), 435--445.
		
%		\bibitem{watsonself}
%		G.~N.~Watson, \emph{Some self-reciprocal functions}, Quart.~J.~Math.~(Oxford)\textbf{2} (1931) 298--309.
		% for the remark that the special case $N=1$ of the kernel $H_{z}^{(N)}(x)$ is the Watson kernel
		 
		 \bibitem{vz}
		 M.~Vlasenko and D.~Zagier, \emph{Higher Kronecker ``limit" formulas for real quadratic fields}, J. reine angew. Math. 679, pp. 23--64 (2013).
		 
		 \bibitem{ccvw}
 P.~Chavan, S.~Chavan, C.~Vignat and T.~Wakhare, \emph{Dirichlet series under standard convolutions: variations on Ramanujan's identity for odd zeta values},  Ramanujan J.~\textbf{59} no. 4 (2022), 1245--1285.
 
		\bibitem{wig}
		S.~Wigert, \emph{Sur une extension de la s\'{e}rie de Lambert}, Arkiv Mat.~Astron.~Fys.~\textbf{19} (1925), 13 pp.
		
		\bibitem{wigannalen}
		S.~Wigert, \emph{Sur une nouvelle fonction enti$\grave{e}$re et son application $\grave{a}$ la th\'eorie des nombres}, Math.~Ann.~\textbf{96} No. 1 (1927), 420--429.
		
		\bibitem{Wig}
		S.~Wigert, \emph{Note sur la s\'{e}rie $\displaystyle{\sum_{n=1}^{\infty}e^{-n^kx}}$}, Tohoku Math.~J.~\textbf{38} (1933), 451--457.
		
			\bibitem{wiman}
		A.~Wiman, \emph{Uber den fundamental satz in der theorie der funcktionen $E_{a}(x)$}, Acta Math.~\textbf{29} (1905), 191--201.
		
		\bibitem{wright}
		E.~M.~Wright, \emph{Asymptotic partition formulae I. Plane partitions}, Q.~J.~Math.~\textbf{1} (1931), 177--189.
		
%		\bibitem{wright3}
%	E.~M.~Wright, \emph{Asymptotic partition formulae, III. Partitions into kth powers}, Acta~Math.~\textbf{63} (1934), 143--191.
		
		\bibitem{zagier1975}
		D.~Zagier, \emph{A Kronecker limit formula for real quadratic fields}, Math.~Ann.~\textbf{213} (1975), 153--184.
		
		\bibitem{zagierhrj}
		D.~Zagier, \emph{Power partitions and a generalized eta transformation property}, Hardy-Ramanujan J.~\textbf{44} (2021), 1-18.
		
	\end{thebibliography}
\end{document}